\newcommand\joni[1]{{\color{black}{{#1}}}}
\newcommand\red[1]{{\color{black}{{#1}}}}
\newcommand{\ten}[1]{\boldsymbol{\mathscr{#1}}}
\newcommand\numberthis{\addtocounter{equation}{1}\tag{\theequation}}
\newtheorem{assumption}{Assumption}
\newtheorem{theorem}{Theorem}
\newtheorem{lemma}{Lemma}
\newtheorem{remark}{Remark}
\newtheorem{definition_own}{Definition}
\newcommand{\tprime}{'}
\title{\red{Fast tensorial JADE}}
\begin{document}

\author[1,2]{Joni Virta}
\author[1]{Niko Lietz\'{e}n}
\author[1]{Pauliina Ilmonen}
\author[3]{Klaus Nordhausen}

\affil[1]{Aalto University School of Science, Finland}
\affil[2]{University of Turku, Finland}
\affil[3]{Vienna University of Technology, Austria}

\maketitle

\begin{abstract}
In this work, we propose a novel method for tensorial independent component analysis. Our approach is based on TJADE and $ k $-JADE, two recently proposed generalizations of the classical JADE algorithm. Our novel method achieves the consistency and the limiting distribution of TJADE under mild assumptions, and at the same time offers notable improvement in computational speed. Detailed mathematical proofs of the statistical properties of our method are given and, as a special case, a conjecture on the properties of $ k $-JADE is resolved. Simulations and timing comparisons demonstrate remarkable gain in speed. Moreover, the desired efficiency is obtained approximately for finite samples. The method is applied successfully to large-scale video data, for which neither TJADE nor $ k $-JADE is feasible. \red{Finally, an experimental procedure is proposed to select the values of a set of tuning parameters.}

\end{abstract}


\section{Tensorial independent component analysis}\label{sec:intro}
In modern data analysis an increasingly common and a natural assumption is that the covariance matrices are Kronecker-structured: the random vector $\textbf{x} \in \mathcal{R}^{pq}$ has a covariance matrix $\boldsymbol{\Sigma} = \boldsymbol{\Sigma}_2 \otimes \boldsymbol{\Sigma}_1$ where $\boldsymbol{\Sigma}_1 \in \mathcal{R}^{p \times p}$, $\boldsymbol{\Sigma}_2 \in \mathcal{R}^{q \times q}$ are positive-definite and $\otimes$ is the Kronecker product. In order for an  estimator of $\boldsymbol{\Sigma}$ to be adequate, it should utilize this special structure, see e.g.  \cite{ LengPan:2018,ros2016existence,srivastava2008models, werner2008estimation, wiesel2012geodesic}. In particular, if the Kronecker structure is ignored, the amount  of parameters is inflated from $p(p + 1)/2 + q(q + 1)/2$ to $pq(pq + 1)/2$.

Basic properties of the Kronecker product ensure that any zero-mean random vector $\textbf{x}$, with the Kronecker covariance structure, admits a natural representation as a random matrix by means of the matrix location-scatter model,
\[
\textbf{X} = \boldsymbol{\Sigma}_1^{1/2} \textbf{Z} \left( \boldsymbol{\Sigma}_2^{1/2} \right){}\tprime,
\]
where the original random vector is obtained from vectorization, $\mathrm{vec}[\textbf{X}] = \textbf{x}$, and the latent random matrix $\textbf{Z}$ is standardized such that $\mathrm{Cov} [ \mathrm{vec}[ \textbf{Z} ] ] = \textbf{I}_{pq}$. In order to find interesting structures, the uncorrelatedness assumption for the elements of $\textbf{Z}$ is occasionally not enough. In \cite{virta2017independent} the uncorrelatedness assumption was replaced by the assumption of full statistical independence and the location-scatter model was extended to the matrix-valued independent component model,
\begin{align}\label{eq:IC_model}
	\textbf{X} = \boldsymbol{\Omega}_1 \textbf{Z} \boldsymbol{\Omega}_2\tprime,
\end{align}
where the invertible $\boldsymbol{\Omega}_1 \in \mathcal{R}^{p \times p}, \boldsymbol{\Omega}_2 \in \mathcal{R}^{q \times q}$ are unknown parameters and the latent tensor $\textbf{Z}$ is assumed to have mutually independent, marginally standardized components. The estimation procedure of a latent vector with independent components, using only the information provided by observed linear combinations of the components, is referred to as independent component analysis (ICA). See \cite{ comon2010handbook,hyvarinen2004independent,ilmonen2011semiparametrically,  miettinen2015fourth, NordhausenOja:2018,samarov2004nonparametric} for different approaches under classic multivariate settings. Since the location has no effect on the estimation of the so-called mixing matrices $\boldsymbol{\Omega}_1$ and $\boldsymbol{\Omega}_2$, we can without loss of generality assume that $\textbf{Z}$ is centered. Under the additional assumption that at most one entire row of $\textbf{Z}$ has a multivariate normal distribution and at most one entire column of $\textbf{Z}$ has a multivariate normal distribution, it can be shown that the latent $\textbf{Z}$ is identifiable up to the order, joint scaling and signs of its rows and columns \citep{virta2017independent}. This pair of assumptions is less restrictive than its counterpart in standard vector-valued ICA, which allows a maximum of one normal component in the latent vector \citep{comon2010handbook}. On the contrary, the assumptions of the matrix model allow $\textbf{Z}$ to contain up to $pq - \mathrm{max}(p, q)$ normal components, if the non-normal elements are suitably located.

Vectorizing Eq. \eqref{eq:IC_model} yields a Kronecker-structured standard independent component model
\[
\textbf{x} = ( \boldsymbol{\Omega}_2 \otimes \boldsymbol{\Omega}_1 ) \textbf{z},
\]
where $\textbf{x} = \mathrm{vec}\left[\textbf{X}\right] \in \mathcal{R}^{pq}$, $\textbf{z} = \mathrm{vec}\left[\textbf{Z}\right] \in \mathcal{R}^{pq}$. Again, here any reasonable ICA approach should take the special form of the mixing matrix into account. To our knowledge, no structured estimation methods have yet been developed for standard vector-valued ICA in this setting. However, the problem has been considered under the matrix representation of Eq. \eqref{eq:IC_model} in \cite{virta2017independent,virta2017jade}, where two classical ICA procedures, \textit{fourth order blind identification} (FOBI) \citep{cardoso1989source} and \textit{joint diagonalization of eigen-matrices} (JADE) \citep{cardoso1993blind}, are extended to TFOBI and TJADE in order to solve Eq. \eqref{eq:IC_model}. Even though TJADE provides a definite improvement in efficiency with respect to TFOBI and the classical multivariate versions of the procedures, it is relatively expensive to compute. Consequently, the main goal of this paper is to address this issue. We provide a computationally more powerful, alternative TJADE-based estimator, which retains the desirable statistical properties of TJADE. In particular, the estimator retains the consistency and the limiting distribution of TJADE.

We wish to point out that the statistical models for matrix data we are about to discuss are markedly different from the ones usually encountered when discussing array-valued data in engineering contexts. The relevant engineering literature is populated mostly with different tensor decompositions, the most popular ones being the Tucker decomposition and the CP-decomposition, which extend the singular value decomposition to higher order structures in different ways, see \cite{cichocki2015tensor,kolda2009tensor} for comprehensive reviews. A fundamental difference between the classical tensor decompositions and our current model is that the former rarely incorporate the concept of sample in the formulation. As such, tensor decompositions generally reduce also the dimension of the observation space which is very unusual for classical statistical methods. These fundamentally different objectives also prevent any simple, meaningful comparisons, and therefore we have refrained from including comparisons to classical tensor decompositions in this work.

The rest of the paper is structured as follows. In Section \ref{sec:tjade}, we review the existing methodology on tensorial independent component analysis and, in particular, TJADE on which our proposed extension will be based. In Section \ref{sec:ktjade}, we formulate $k$-TJADE in the case of matrix-valued observations and thoroughly establish its limiting behavior along with the necessary assumptions. The theoretical results are extended 
to a general tensor-valued model in Section \ref{sec:tensors} using the technique of \textit{matricization}. Section \ref{sec:examples} explores the finite-sample performance of the proposed method under \red{both simulations and data examples on videos and hand-written digits. We also investigate the estimation of a set of tuning parameters of $k$-TJADE in the context of the latter example}. In Section \ref{sec:summary} we provide a short summary and list ideas for future work. Technical proofs are presented in Appendix \ref{sec:proofs}.

\section{Tensorial JADE}\label{sec:tjade}

We begin by briefly describing the theory behind TFOBI and TJADE. Everything in the following is formulated on the population level for a random matrix $\textbf{X} \in \mathcal{R}^{p \times q}$. In practice, one would obtain a sample of matrices, $\textbf{X}_1, \ldots , \textbf{X}_n$, from the distribution of $ \textbf{X} $ and the expected values below should be replaced by sample means. 

Assuming that the random matrix $\textbf{X} \in \mathcal{R}^{p \times q}$ follows the model in Eq. \eqref{eq:IC_model}, the first step is to simultaneously standardize both, the rows and the columns of the matrix, using the left and right covariance matrices of $\textbf{X}$,
\[
\boldsymbol{\Sigma}_1\left[\textbf{X}\right] = \frac{1}{q} \mathbb{E}\left[ \textbf{X} \textbf{X}\tprime \right] \quad \textnormal{and} \quad \boldsymbol{\Sigma}_2\left[\textbf{X}\right] = \frac{1}{p} \mathbb{E}\left[ \textbf{X}\tprime \textbf{X} \right].
\]
We denote the unique symmetric inverse square root of the positive-definite matrix $ \textbf{S} $ by $ \textbf{S}^{-1/2} $. The standardized variable $\textbf{X}^{\textnormal{st}} = ( \boldsymbol{\Sigma}^{-1/2}_1[\textbf{X}] ) \textbf{X} ( \boldsymbol{\Sigma}^{-1/2}_2[\textbf{X}] )\tprime$  satisfies $\textbf{X}^{\textnormal{st}} = \tau \textbf{U}_1 \textbf{Z} \textbf{U}_2\tprime$ for some $\tau = \tau(\boldsymbol{\Omega}_1, \boldsymbol{\Omega}_2) > 0$ and some orthogonal matrices, $\textbf{U}_1 \in \mathcal{R}^{p \times p}, \textbf{U}_2 \in \mathcal{R}^{q \times q}$, see \cite{virta2017independent}. The unknown constant of proportionality $\tau$ is a result of the joint scaling of the model in Eq. \eqref{eq:IC_model} being left unfixed. After the standardization,  solving  the IC problem is reduced to the estimation of the orthogonal matrices $\textbf{U}_1, \textbf{U}_2$, a task commonly addressed in ICA using higher-order cumulants. The TFOBI and TJADE procedures also utilize the higher-order cumulants. In TFOBI, a Fisher consistent (under mild assumptions, see Section \ref{sec:ktjade})  estimator, $\boldsymbol{\Gamma}^\textnormal{F}[\textbf{X}]$,  for the inverse of the matrix $\boldsymbol{\Omega}_1$ is constructed such that 
\begin{align*}
\boldsymbol{\Gamma}^\textnormal{F}[\textbf{X}] = \left(\textbf{V}{}^\textnormal{F}[\textbf{X}]\right)\tprime \boldsymbol{\Sigma}_1^{-1/2}[\textbf{X}],
\end{align*}
where the columns of  $\textbf{V}^\textnormal{F}[\textbf{X}]$ are the eigenvectors of the matrix
\begin{align*}
\textbf{B}\left[\textbf{X}\right] = \frac{1}{q} \mathbb{E} \left[ \textbf{X} \textbf{X}\tprime \textbf{X} \textbf{X}\tprime \right].
\end{align*}
Thus, $\boldsymbol{\Gamma}^\textnormal{F}[\textbf{X}]$ provides a solution for the left-hand side of the IC model in Eq. \eqref{eq:IC_model} \citep{virta2017independent}.

The TJADE procedure utilizes a set of matrices, $\mathcal{C} = \left\{ \textbf{C}^{ij}\left[\textbf{X}^\textnormal{st}\right]  : i, j \in \left\{ 1, \ldots , p \right\}  \right\}$, referred to as the set of cumulant matrices, such that
\begin{align}\label{eq:cumulant_matrix} 
\textbf{C}^{ij}\left[\textbf{X}\right] = \frac{1}{q} \mathbb{E} \left[ \textbf{e}_i\tprime \textbf{X} \textbf{X}\tprime \textbf{e}_j  \textbf{X} \textbf{X}\tprime \right] - \boldsymbol{\Sigma}_1\left[\textbf{X}\right] \left( \delta_{ij} q \textbf{I}_p + \textbf{E}^{ij} + \textbf{E}^{ji} \right) \left(\boldsymbol{\Sigma}_1\left[\textbf{X}\right]\right)\tprime,
\end{align}
where $ \delta_{ij} $ is the Kronecker delta, $\textbf{e}_k \in \mathcal{R}^p$, $k \in \{ 1, \ldots , p \}$, are the standard basis vectors of $\mathcal{R}^p$ and $\textbf{E}^{kl} = \textbf{e}_k \textbf{e}_l\tprime$. The left covariance matrix of the standardized matrix satisfies $\boldsymbol{\Sigma}_1[\textbf{X}^\textnormal{st}] = \tau^2 \textbf{I}_p$ and is included in Eq. \eqref{eq:cumulant_matrix} solely for the estimation of the constant of proportionality $\tau$. The authors in \cite{virta2017jade} proved that the \textit{joint diagonalizer} of $\mathcal{C}$ is under mild assumptions, see Section \ref{sec:ktjade}, equal to the orthogonal matrix $\textbf{U}_1$ up to the order and signs of its columns. The joint diagonalizer of the set $\mathcal{C}$ is defined as any orthogonal matrix $\textbf{V} \in \mathcal{R}^{p \times p}$ that minimizes
\begin{align}\label{eq:joint_diag}
\tilde{g}\left(\textbf{V},\textbf{X}^\textnormal{st}\right) = \sum_{i,j= 1}^p  \left\| \mathrm{off} \left( \textbf{V}\tprime \textbf{C}^{ij}\left[\textbf{X}^\textnormal{st}\right] \textbf{V} \right)  \right\|_\textnormal{F}^2,
\end{align}
where $\mathrm{off}(\textbf{S}) \in \mathcal{R}^{p \times p}$ is equal to $\textbf{S} \in \mathcal{R}^{p \times p}$ with the diagonal elements set to zero.

The joint diagonalizer defines a coordinate system in which the linear transformations $\textbf{C}^{ij}[\textbf{X}^\textnormal{st}]$, $i, j \in \left\{ 1, \ldots , p \right\}$, \red{have minimal sum of squared off-diagonal elements}. There exists several algorithms for optimizing Eq. \eqref{eq:joint_diag}, the most popular being the Jacobi-rotation technique, for details see \cite{belouchrani1997blind,IllnerMiettinenFuchsTaskienNordhausenOjaTheis2015}. After the estimation of the joint diagonalizer $\textbf{V}^\textnormal{J}\left[\textbf{X}\right]$, an estimated inverse for the matrix $\boldsymbol{\Omega}_1$ is  obtained as the TJADE-fucntional, $\boldsymbol{\Gamma}^\textnormal{J}\left[\textbf{X}\right] = (\textbf{V}^\textnormal{J}\left[\textbf{X}\right])\tprime \boldsymbol{\Sigma}_1^{-1/2}\left[\textbf{X}\right]$.

The results of this paper are derived only for the left-hand side of the matrix-valued model. The right-hand side of the matrix-valued model can be solved exactly as the left-hand side. One can simply take the transpose of $\textbf{X}$ and proceed as with the left-hand side. Only the sizes of the cumulant and transformation matrices change from $p \times p$ to $q \times q$. Moreover, matricization allows us to extend the estimators beyond the matrix-valued model to arbitrary-dimensional tensor-valued IC models, see \cite{virta2017independent,virta2017jade}. Matricization allows us to hide a considerable amount of the unpleasant notation related to tensor algebra, see Section \ref{sec:tensors}. In total, it is sufficient to present the results in matrix form and only for the left-hand side of the model in Eq. \eqref{eq:IC_model}.


When the dimension $q$ is equal to one, the TJADE procedure for the left-hand side of the model is equivalent to the standard JADE for vector-valued data. Extensive comparisons between JADE and TJADE are conducted in \cite{VirtaTaskinenNordhausen:2016,virta2017independent,virta2017jade} with the conclusion that the latter is uniformly superior to the former under the Kronecker-structured IC model.  Moreover, the tensorial version is computationally significantly faster. Consider a tensor of $r$th order with all dimensions of size $p$. Standard JADE requires a single joint diagonalization of $p^{2r}$ matrices that are of size $p^r \times p^r$, whereas TJADE requires $r$ joint diagonalizations of $p^2$ matrices that are of size $p \times p$. In essence, adding dimensions to a tensor has a multiplicative effect on the number of operations  the classic vectorial methods require and merely an additive effect on the tensorial methods. However, even with its considerable advantages over JADE, running the TJADE procedure is slow for large tensors.

To obtain a faster method, we approach the problem in the spirit of \cite{miettinen2013fast} where a faster version of JADE, $ k $-JADE, is derived. The modification can be described very succinctly: instead of diagonalizing the entire set of cumulant matrices $\textbf{C}^{ij}$, we diagonalize only a specific subset of them, chosen such that the desirable statistical properties of TJADE are still carried over to the extension. Since the subset of cumulant matrices can be chosen separately in each direction of the tensor, the $ k $-JADE approach provides even more significant improvements in tensorial settings compared to its original use in improving JADE. Note that, similar ideas as in \cite{miettinen2013fast} were used already in \cite{cardoso1999high} to formulate \textit{shifted blocks for blind separation} (SHIBBS), where only those cumulant matrices of regular JADE with matching indices are diagonalized.


\section{Tensorial $k$-JADE}\label{sec:ktjade}

In this section we propose a novel extension of the TJADE procedure. We formulate the extension, $ k $-TJADE, such that it retains the following three key properties of TJADE. The first of these properties is the ability to solve the tensor independent component model, manifesting either as Fisher consistency or consistency, depending on whether we are at the population or sample level, respectively. The second property is orthogonal equivariance under arbitrary data. ICA-estimators are customarily expected to have some form of equivariance, which makes the generalization of limiting properties more straightforward \citep{miettinen2015fourth, virta2017independent}.
The third desired property is the limiting distribution of TJADE, the \red{one with the lowest limiting variance of the known tensorial ICA methods.} Next, we establish these properties one-by-one. As mentioned in the previous section, all results derived for the left-hand side of the model also hold for the right-hand side, prompting us to consider only the former in the following. \red{In the same spirit, even though the proposed $ k $-TJADE method has in the case of matrix data two tuning parameters, $ k_1 $ for the rows and $ k_2 $ for the columns, the method still acts only on a single mode at a time, and as such we omit the subscripts and speak in the following only of the tuning parameter $ k $. Moreover, the same idea is reflected in the name of the method which should (for matrix data) technically be $ (k_1, k_2) $-TJADE. In order to keep the presentation more readable, we prefer to call the method simply $ k $-TJADE.}

We define \textit{matrix independent component functionals}, the extension of independent component functionals \citep{miettinen2015fourth} to matricial ICA.
\begin{definition_own}\label{def:ic_functional}
	A $p \times p$ matrix-valued functional $\boldsymbol{\Gamma}$ is a matrix independent component (IC) functional if
	\begin{itemize}
		\item[(i)] $\boldsymbol{\Gamma}\left[\textbf{X}\right] \equiv \boldsymbol{\Omega}_1^{-1}$ for all $\textbf{X} \in \mathcal{R}^{p \times q}$ that follow the matrix IC model of Eq. \eqref{eq:IC_model},
		\item[(ii)] $\boldsymbol{\Gamma}\left[\textbf{U}_1 \textbf{X} \textbf{U}_2\tprime\right] \equiv \boldsymbol{\Gamma}\left[\textbf{X}\right] \textbf{U}_1\tprime$ for all $\textbf{X} \in \mathcal{R}^{p \times q}$ and all orthogonal $\textbf{U}_1 \in \mathcal{R}^{p \times p}$, $\textbf{U}_2 \in \mathcal{R}^{q \times q}$,
	\end{itemize}
	where two matrices $\textbf{A}, \textbf{B} \in \mathcal{R}^{p \times p}$ satisfy $\textbf{A} \equiv \textbf{B}$ if $\textbf{A} = c \textbf{P} \textbf{J} \textbf{B}$ for some $c > 0$, some diagonal matrix $\textbf{J} \in \mathcal{R}^{p \times p}$ with diagonal elements equal to $\pm 1$ and some permutation matrix $\textbf{P} \in \mathcal{R}^{p \times p}$.
\end{definition_own}

The first condition in Definition \ref{def:ic_functional} requires that a matrix IC functional must be able to solve the left-hand side of the model in Eq. \eqref{eq:IC_model} (Fisher consistency). The second condition essentially states that the functional cancels out any orthogonal transformations on the observed matrices (orthogonal equivariance). As a particularly useful consequence of the latter, the limiting distribution of a matrix IC functional under trivial mixing, $\boldsymbol{\Omega}_1 = \textbf{I}_p$, $\boldsymbol{\Omega}_2 = \textbf{I}_q$, instantly generalizes to any orthogonal mixing as well.

Let $\boldsymbol{\kappa} \in \mathcal{R}^p$ be the vector of the row means of the element-wise kurtoses, $\mathbb{E}\left[x_{kl}^4\right] - 3$, of the elements of $\textbf{Z}$.
\begin{assumption}\label{assu:JADE}
	At most one element of $\boldsymbol{\kappa}$ equals zero.
\end{assumption}

\begin{assumption}[$ v $]\label{assu:FOBI}
	The multiplicities of the elements of $\boldsymbol{\kappa}$ are at most $v$.
\end{assumption}
The TFOBI functional $ \boldsymbol{\Gamma}^\textnormal{F} $ is a matrix IC functional in the sense of Definition \ref{def:ic_functional} if Assumption \ref{assu:FOBI}$(1)$ is satisfied and the TJADE functional $\boldsymbol{\Gamma}^\textnormal{J}$ is a matrix IC functional if Assumption \ref{assu:JADE} is satisfied. Naturally, the column mean analogues of the assumptions are required to separate the right-hand side of the model in Eq. \eqref{eq:IC_model}.

The same comparison as was done between the normality assumptions in Section \ref{sec:intro} holds analogously between Assumption \ref{assu:JADE}, Assumption \ref{assu:FOBI}$ (\red{\nu}) $ and their vectorial counterparts allowing maximally one zero-kurtosis component or only distinct kurtoses, respectively. The main implication is that in matrix ICA numerous latent components may have identical kurtoses as long as their row means (and column means when separating the right-hand side of the model) satisfy the necessary requirements. The assumptions also satisfy the following set of relations,
\[ 
\mbox{\textbf{Assumption} }2(1) \subset \cdots \subset \mbox{\textbf{Assumption} }2(p),
\]
where $ \subset $ means ``implies''. Moreover, we have also $\mbox{\textbf{Assumption} }2(1) \subset \mbox{\textbf{Assumption} }1$.

In order to speed up TJADE such that the properties in Definition \ref{def:ic_functional} are retained, we proceed as in \cite{miettinen2013fast}, and instead of diagonalizing the set $\mathcal{C}$, we diagonalize only those members of it which satisfy $|i - j| < k$, for some pre-defined value of the tuning parameter $k \in \{1, \ldots , p \}$. This discarding can be motivated in two ways. Firstly, all except the repeated index matrices, $\textbf{C}^{11}, \ldots , \textbf{C}^{pp}$, vanish asymptotically. Every matrix $\textbf{C}^{ij}\left[\textbf{Z}\right] = \textbf{0}$, $i \neq j$, implying that with increasing sample size all the separation information is eventually contained in the $ p $ repeated index matrices. Secondly, by assuming that the values in $\boldsymbol{\kappa}$ are in decreasing order (this is guaranteed by \red{using TFOBI as a preprocessing step, see the next paragraph}) the $i$th row of $\textbf{Z}$ is the most difficult to separate from its immediate neighboring rows and the separation information between them is contained precisely in the matrices $\textbf{C}^{ij}$ and $\textbf{C}^{ji}$ where $j$ is close to $i$.

Analogously to $k$-JADE, we use the TFOBI-algorithm to obtain an initial value for the functional. This ensures that even after the previous modification, the functional remains orthogonally equivariant. The following definition and theorem formalize our resulting novel method, called $k$-TJADE.

\begin{definition_own}
	Fix $k \leq p$. The $k$-TJADE functional is \[\boldsymbol{\Gamma}^{k} [ \textbf{X} ] = (\textbf{V} [ \textbf{X}^\textnormal{F} ])\tprime \boldsymbol{\Gamma}^\textnormal{F}\left[\textbf{X}\right],\] where $\boldsymbol{\Gamma}^\textnormal{F}$ is the TFOBI functional, $\textbf{X}^\textnormal{F} = \boldsymbol{\Gamma}^\textnormal{F}\left[\textbf{X}\right] \textbf{X} (\boldsymbol{\Gamma}^\textnormal{F}[\textbf{X}\tprime])\tprime$ is the TFOBI-solution for $\textbf{X}$ and the orthogonal matrix $\textbf{V} [ \textbf{X}^\textnormal{F} ] = (\textbf{v}_1, \ldots , \textbf{v}_p)$ is the joint diagonalizer of the set of matrices
	$
	\mathcal{C}_k = \{ \textbf{C}^{ij} [ \textbf{X}^\textnormal{F} ] : \lvert i - j \rvert < k \}.
	$
\end{definition_own}

\begin{theorem}\label{theo:functional}
	Let Assumptions \ref{assu:JADE} and \ref{assu:FOBI}$(v)$ hold for some fixed $v$. Then the $k$-TJADE functional $\boldsymbol{\Gamma}^k$ is a matrix IC functional for all $k \geq v$.
\end{theorem}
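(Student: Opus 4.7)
My plan is to verify the two defining properties of a matrix IC functional separately. Orthogonal equivariance I would dispatch first: the TFOBI functional $\boldsymbol{\Gamma}^\textnormal{F}$ is itself orthogonally equivariant (see \cite{virta2017independent}), the cumulant matrices $\textbf{C}^{ij}$ transform equivariantly under orthogonal premultiplication of the data, and the joint diagonalizer of a finite set of matrices is only defined up to the equivalence $\equiv$ of Definition \ref{def:ic_functional}. Propagating the action $\textbf{X} \mapsto \textbf{U}_1 \textbf{X} \textbf{U}_2\tprime$ through these three constituents of $\boldsymbol{\Gamma}^k$ then yields condition (ii).

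For Fisher consistency I would first use the equivariance just established to reduce to the case of trivial mixing, so that the goal becomes $\boldsymbol{\Gamma}^k[\textbf{Z}] \equiv \textbf{I}_p$. A direct calculation from Eq.\ \eqref{eq:cumulant_matrix}, using independence, centering and standardization of the entries of $\textbf{Z}$, yields $\textbf{C}^{ii}[\textbf{Z}] = \kappa_i \textbf{E}^{ii}$ and $\textbf{C}^{ij}[\textbf{Z}] = \textbf{0}$ for $i \ne j$, and in parallel $\textbf{B}[\textbf{Z}] = \mathrm{diag}(\kappa_1, \ldots, \kappa_p) + (p+q+1)\textbf{I}_p$. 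Under Assumption \ref{assu:FOBI}$(v)$ the eigenspaces of $\textbf{B}[\textbf{Z}]$ have dimension at most $v$, so TFOBI recovers $\textbf{Z}$ only up to a block-diagonal orthogonal matrix $\textbf{Q}$ whose blocks correspond to groups of tied $\kappa_i$; after the natural TFOBI ordering of rows by decreasing $\kappa_i$, the index set of every block is an interval of consecutive integers of length at most $v$. The equivariant transformation law of cumulants then gives $\textbf{C}^{ij}[\textbf{X}^\textnormal{F}] = \textbf{Q}\tprime\, \textbf{D}^{ij}\, \textbf{Q}$, where $\textbf{D}^{ij}$ is the diagonal matrix with entries $Q_{ai} Q_{aj} \kappa_a$, $a = 1, \ldots, p$. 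The block structure of $\textbf{Q}$ forces $\textbf{D}^{ij} = \textbf{0}$ whenever $i$ and $j$ sit in different blocks, whereas any $i, j$ inside a common block satisfy $|i - j| < v \le k$. Hence every non-vanishing cumulant matrix already lies in the restricted family $\mathcal{C}_k$.

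The main obstacle is then to show that joint diagonalization of $\mathcal{C}_k$ recovers $\textbf{Q}$ up to $\equiv$. Assumption \ref{assu:JADE} enters precisely here to force $\kappa_\beta \neq 0$ in every non-singleton block $\beta$: the unique possibly-vanishing $\kappa_i$ must sit alone in a singleton block, which is already resolved by TFOBI, so the within-block cumulants are non-trivial. The joint diagonalization problem decouples across blocks, and inside a block $\beta$ of size $m_\beta > 1$ reduces to finding an orthogonal $\textbf{U}_\beta$ that conjugately diagonalizes the family $\kappa_\beta\, \mathrm{diag}(Q_{ai} Q_{aj} : a \in \beta)$, $(i,j) \in \beta \times \beta$. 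Uniqueness of $\textbf{U}_\beta$ up to signed permutation reduces to the injectivity, for $a \in \beta$, of the map sending $a$ to the outer product of the $a$-th row of the block $\textbf{Q}_\beta$ with itself; since these rows are pairwise orthogonal unit vectors and hence cannot differ only by a global sign, the resulting rank-one matrices are pairwise distinct and injectivity follows. Chaining these steps together yields $\boldsymbol{\Gamma}^k[\textbf{Z}] \equiv \textbf{I}_p$. I expect the delicate part of the proof to be this intra-block identifiability step — essentially the vector $k$-JADE identifiability argument of \cite{miettinen2013fast} specialized inside each block — together with careful bookkeeping between the entry-wise kurtoses of $\textbf{Z}$ and the row-mean kurtoses $\kappa_i$ that drive the assumptions.
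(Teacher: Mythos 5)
Your proposal is correct and follows essentially the same route as the paper's proof: orthogonal equivariance is obtained from the equivariance of the TFOBI step and of the cumulant matrices; Fisher consistency is reduced to identifying the residual block-diagonal orthogonal rotation $\textbf{H}$ left over by TFOBI; one observes that the only non-vanishing matrices $\textbf{C}^{ij}[\textbf{X}^\textnormal{F}] = \textbf{H}\textbf{D}^{ij}\textbf{H}\tprime$ have $i,j$ in a common block and hence lie in $\mathcal{C}_k$; and uniqueness of the joint diagonalizer is verified through the eigenvalue-separation criterion, with Assumption \ref{assu:JADE} entering exactly where you place it, to rule out a zero kurtosis mean inside a non-singleton block. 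Your within-block identifiability step (two orthonormal columns of a block cannot have equal self-outer-products) is a cleaner packaging of the paper's algebraic contradiction obtained by multiplying the eigenvalue identities by $h_{is}h_{js}$ and summing over the block, and the only point you gloss over with ``decouples across blocks'' is the cross-block separation, which the paper verifies explicitly by summing the repeated-index identities $h_{is}^2\kappa_s = h_{it}^2\kappa_t$ over $i$ to force $\kappa_s = \kappa_t$, contradicting the distinctness of the block kurtoses --- a routine step given your setup.
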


Theorem \ref{theo:functional} provides the Fisher consistency and the orthogonal equivariance for the $ k $-TJADE functional. The assumptions that Theorem \ref{theo:functional} requires are interesting since they provide an interpretation for the tuning parameter $ k $ --- the parameter is the maximal number of allowed   kurtosis mean multiplicities. The values $ k = 1 $ and $ k = p $ correspond to the extreme cases where all the kurtosis means have to be distinct (as in TFOBI) and where no assumptions are made on the multiplicities of the non-zero kurtosis means (as in TJADE). Thus, $ k $-TJADE can be seen as a middle ground between TFOBI and TJADE. As the assumptions, also the methods can be ordered according to the strictness of the assumptions they require,
\[ 
\mbox{TFOBI} \succeq  1 \mbox{-TJADE} \succeq  \cdots \succeq   p \mbox{-TJADE} = \mbox{TJADE},
\] 
where ``$ \succeq  $'' is read as ``makes at least as many assumptions as'' and ``$ = $'' as ``makes the same assumptions as''. 

\red{We next give some intuition behind the proof of Theorem \ref{theo:functional} (given in Appendix \ref{sec:proofs}). The proof relies on a specific interplay between the preliminary TFOBI-step and the set of cumulant matrices $ \mathcal{C} $. Being an eigendecomposition-based method, TFOBI is able to estimate the matrix $ \boldsymbol{\Omega}_1^{-1} $ only up to row blocks determined by the elements of $ \boldsymbol{\kappa} $, such that the rows of the estimate corresponding to equal kurtosis values remain mixed by orthogonal matrices (c.f. eigenvectors corresponding to multiple eigenvalues are not uniquely defined but their span is). However, the proof also shows that the joint diagonalization of all cumulant matrices with $ |i - j| < k $, for some given $ k $, can separate mixed row blocks of size at most $ k $ (intuitively, increasing $ k $ means using more cumulant matrices which increases the amount of available information). Thus, by sequencing the two steps together and having $ k \geq \nu $ ensures that any blocks left still mixed after the TFOBI-step will be unmixed in the joint diagonalization step.}

Assumptions \ref{assu:JADE} and \ref{assu:FOBI}$(v)$ not only provide Theorem \ref{theo:functional}, but are also sufficient to guarantee the two remaining desired properties of $ k $-TJADE, consistency and the same limiting distribution as that of TJADE. These asymptotic properties are formalized in the following two theorems. Remarkably as a special case, when $ q = 1 $, the latter also proves a previously unsolved conjecture posed about the limiting behavior of vectorial $ k $-JADE in \cite{miettinen2015fourth}.

\begin{theorem}\label{theo:consistency}
	Let Assumptions \ref{assu:JADE} and \ref{assu:FOBI}$(v)$ hold for some fixed $ v $ and let $\textbf{X}_1, \ldots , \textbf{X}_n$ be an i.i.d. sequence from the matrix IC model in Eq. \eqref{eq:IC_model} with identity mixing matrices, $\boldsymbol{\Omega}_1 = \textbf{I}_p, \boldsymbol{\Omega}_2 = \textbf{I}_q$. Assume that the population quantity $\textbf{X}$ has finite eight moments. Then, for all $k \geq v$, there exists a consistent sequence of $ k $-TJADE-estimators (indexed by $ n $). That is,
	\[
	\hat{\boldsymbol{\Gamma}}{}^{k}  \rightarrow_{\mathbb{P}} \textbf{I}_{p}.
	\]
\end{theorem}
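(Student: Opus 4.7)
The plan is to establish consistency by tracing convergence through the three stages of $ k $-TJADE: (i) the sample covariance and sample cumulant matrices, (ii) the TFOBI preprocessing, and (iii) the joint diagonalization of $\mathcal{C}_k$. Under identity mixing $\mathbf{X} = \mathbf{Z}$, the population target is $\mathbf{I}_p$ (up to the usual sign/permutation indeterminacy absorbed into the ``consistent sequence'' formulation), and the population TFOBI and joint-diagonalization outputs are well understood thanks to Theorem \ref{theo:functional}.

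First, I would invoke the classical strong law of large numbers componentwise to show that $\hat{\boldsymbol{\Sigma}}_1[\mathbf{X}] \to_{\mathbb{P}} \mathbf{I}_p$, $\hat{\mathbf{B}}[\mathbf{X}] \to_{\mathbb{P}} \mathbf{B}[\mathbf{Z}]$, and $\hat{\mathbf{C}}^{ij}[\mathbf{X}] \to_{\mathbb{P}} \mathbf{C}^{ij}[\mathbf{Z}]$ for every $i,j$. Each of these is an empirical average of (at worst) a fourth-degree polynomial in the entries of $\mathbf{X}$, so the finite eighth-moment hypothesis guarantees integrability of the squared summand and in fact gives $\sqrt{n}$-rates; but the weaker conclusion of convergence in probability is all that is needed here. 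By the continuous mapping theorem one then obtains $\hat{\boldsymbol{\Sigma}}_1^{-1/2}[\mathbf{X}] \to_{\mathbb{P}} \mathbf{I}_p$ and the whitened observations $\hat{\mathbf{X}}^{\mathrm{st}}$ behave asymptotically like $\mathbf{Z}$.

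Next comes the TFOBI stage. The matrix $\mathbf{B}[\mathbf{Z}]$ is diagonal with diagonal entries whose multiplicities are at most $v$ under Assumption \ref{assu:FOBI}$(v)$. Eigenvectors within a repeated-eigenvalue block of $\mathbf{B}[\mathbf{Z}]$ are not uniquely defined, so $\hat{\mathbf{V}}^{\mathrm{F}}$ need not converge to a fixed limit. However, by the Davis--Kahan theorem the spectral projectors onto the eigenspaces of $\hat{\mathbf{B}}$ converge in probability to those of $\mathbf{B}[\mathbf{Z}]$. This allows one to choose (measurably, for each $n$) signs and within-block rotations so that $\hat{\mathbf{V}}^{\mathrm{F}}$ converges in probability to some block-diagonal orthogonal limit $\mathbf{U}^{\star}$, whose blocks match the multiplicity structure of $\mathbf{B}[\mathbf{Z}]$ and have size at most $v$. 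Consequently $\hat{\boldsymbol{\Gamma}}^{\mathrm{F}} \to_{\mathbb{P}} (\mathbf{U}^{\star})\tprime$ and the TFOBI-processed data satisfy $\hat{\mathbf{X}}^{\mathrm{F}} \to_{\mathbb{P}} (\mathbf{U}^{\star})\tprime \mathbf{Z} \mathbf{U}^{\star}_{2}\tprime$ in the appropriate sense, where $\mathbf{U}^{\star}_2$ is an analogous block-orthogonal limit from the right-hand standardization inside $\hat{\mathbf{X}}^{\mathrm{F}}$.

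Finally, I apply Theorem \ref{theo:functional} at the population level: because $k \geq v$ and $\mathbf{U}^{\star}$ only mixes within the kurtosis-mean blocks, the population joint diagonalizer of $\{\mathbf{C}^{ij}[(\mathbf{U}^{\star})\tprime \mathbf{Z} \mathbf{U}_2^{\star}\tprime] : |i-j| < k \}$ is exactly $(\mathbf{U}^{\star})\tprime$, up to the usual sign/permutation equivalence. Combined with consistency of the sample cumulant matrices from the first step, one can then pass through the continuity of the joint diagonalizer -- stated, for instance, as Lemma 3 in Miettinen et al.\ \cite{miettinen2015fourth} and adapted verbatim to the matrix setting -- to conclude that after a further suitable choice of signs and permutations the sample joint diagonalizer $\mathbf{V}[\hat{\mathbf{X}}^{\mathrm{F}}]$ converges in probability to $(\mathbf{U}^{\star})\tprime$. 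Taking the product then yields $\hat{\boldsymbol{\Gamma}}^{k} = (\mathbf{V}[\hat{\mathbf{X}}^{\mathrm{F}}])\tprime \hat{\boldsymbol{\Gamma}}^{\mathrm{F}} \to_{\mathbb{P}} \mathbf{U}^{\star}(\mathbf{U}^{\star})\tprime = \mathbf{I}_p$ along the chosen sequence, which is the desired conclusion.

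The main obstacle is the combined sign/permutation/within-block rotation indeterminacy, which prevents a naive direct convergence claim for either $\hat{\mathbf{V}}^{\mathrm{F}}$ or $\mathbf{V}[\hat{\mathbf{X}}^{\mathrm{F}}]$ individually. The essential technical content is therefore the measurable selection argument producing a sequence of sign/permutation aligners that makes the two stages converge together, and the verification that the continuity argument for the joint diagonalizer can tolerate the block-orthogonal limit produced by TFOBI. Assumption \ref{assu:JADE} is exactly what ensures that, after this block-orthogonal preliminary step, the remaining joint diagonalization problem has an isolated minimizer up to the trivial indeterminacies, which is what makes this continuity step legitimate.
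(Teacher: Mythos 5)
Your outline founders at the TFOBI stage, and the failure is precisely the central difficulty that the paper's proof is organized around. You claim that, by Davis--Kahan plus a measurable choice of ``signs and within-block rotations,'' the TFOBI eigenvector matrix $\hat{\textbf{V}}^{\textnormal{F}}$ can be made to converge in probability to a block-diagonal orthogonal limit $\textbf{U}^{\star}$. This is not available. For a generic finite sample, $\hat{\textbf{B}}$ has simple spectrum, so its eigenvectors are pinned down up to sign and order only; there is no within-block rotation freedom at the sample level to exploit. Within a population eigenspace of dimension greater than one the sample eigenbasis fluctuates continuously (in the $2\times 2$ repeated-eigenvalue case it is an angle with a non-degenerate limiting law), and no sequence of sign flips and permutations --- which is all the $\equiv$-equivalence of Definition \ref{def:ic_functional} permits --- can make it converge. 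Inserting an arbitrary block-orthogonal aligner $\hat{\textbf{O}}_n$ between the two stages is not an innocent relabelling either: the set $\mathcal{C}_k = \{ \textbf{C}^{ij}[\textbf{X}^{\textnormal{F}}] : |i-j|<k \}$ is a coordinate-dependent selection, so re-rotating $\textbf{X}^{\textnormal{F}}$ changes which cumulant matrices are jointly diagonalized, i.e.\ it changes the estimator itself. Consequently your final continuity-of-the-joint-diagonalizer step is applied to a sequence of matrix sets that does not converge, and the argument does not close.

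The paper's proof avoids requiring $\hat{\textbf{H}}$ (its name for $\hat{\textbf{V}}^{\textnormal{F}}$, essentially) to converge at all. It reparametrizes the joint diagonalizer as $\hat{\textbf{V}} = \hat{\textbf{H}}\hat{\textbf{W}}$ and shows $\hat{\textbf{W}} \rightarrow_{\mathbb{P}} \textbf{I}_p$, so that the non-convergent factor cancels in the composition $\hat{\textbf{V}}{}\tprime\hat{\textbf{H}} = \hat{\textbf{W}}{}\tprime$. The two ingredients are Lemma \ref{lem:H_simplification}, which rewrites $\hat{\textbf{H}}{}\tprime\hat{\textbf{C}}{}^{ij}[\hat{\textbf{H}}\textbf{Y}_N]\hat{\textbf{H}}$ as a linear combination of the $\hat{\textbf{C}}{}^{ab}[\textbf{Y}_N]$, and Lemma \ref{lem:U_convergence}, which gives the $\mathcal{O}_p(1/\sqrt{n})$ rate for the off-block entries of $\hat{\textbf{H}}$ (your Davis--Kahan observation is the qualitative shadow of this, but the rate is used essentially). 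Completing the sum over all $(i,j)$ and controlling the added remainder $\hat{m}$ --- using Assumption \ref{assu:FOBI}$(v)$ to guarantee that every discarded pair $|i-j|\geq k$ contributes at least one off-block, hence $\mathcal{O}_p(1/\sqrt{n})$, factor --- shows the $k$-TJADE objective equals the full TJADE objective plus a uniformly $o_p(1)$ term, after which a standard M-estimation argument delivers consistency. If you want to repair your proposal, the composite-map analysis (or something equivalent that never asks the TFOBI rotation to converge) is the missing idea; your first stage (law of large numbers for the sample moment matrices under the eighth-moment condition) and your identification of Assumption \ref{assu:JADE} as the source of uniqueness of the population minimizer are both fine and are also used in the paper.
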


\begin{theorem}\label{theo:limiting}
	Under the assumptions of Theorem \ref{theo:consistency}, we have for all $ k \geq v $ that,
	\[
	\sqrt{n} (\hat{\boldsymbol{\Gamma}}{}^{k} - \textbf{I}_{p} ) = \sqrt{n} (\hat{\boldsymbol{\Gamma}}{}^\textnormal{J} - \textbf{I}_{p} ) + o_p(1),
	\]
	where $\hat{\boldsymbol{\Gamma}}{}^\textnormal{J}$ is the TJADE-estimator. The notation $ o_p(1) $ refers to a sequence of random matrices that converges in probability to the zero matrix.
\end{theorem}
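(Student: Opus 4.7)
The plan is to exploit the structural fact that, under identity mixing, the population cumulant matrices $ \textbf{C}^{ij}[\textbf{Z}] $ vanish whenever $ i \neq j $, as a direct consequence of the independence of the entries of $ \textbf{Z} $. Consequently, every sample cumulant matrix $ \hat{\textbf{C}}{}^{ij} $ with $ i \neq j $ is of order $ O_p(n^{-1/2}) $, and in particular the additional cumulant matrices used by TJADE beyond those used by $ k $-TJADE are asymptotically negligible. By Theorem \ref{theo:consistency} the two joint-diagonalizer rotations, denoted $ \hat{\textbf{V}}{}^k $ and $ \hat{\textbf{V}}{}^\textnormal{J} $, both lie in a shrinking neighborhood of $ \textbf{I}_p $, so I would work there via the exponential parametrization $ \textbf{V} = \exp(\textbf{A}) $ by a skew-symmetric $ \textbf{A} $.

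Writing both rotations as minimizers of joint-diagonalization objectives whose difference is
\[
\hat{T}_n^\textnormal{J}(\textbf{V}) - \hat{T}_n^k(\textbf{V}) = \sum_{|i-j| \geq k} \bigl\| \mathrm{off}(\textbf{V}\tprime \hat{\textbf{C}}{}^{ij}\textbf{V})\bigr\|_\mathrm{F}^2,
\]
and differentiating along a skew-symmetric direction $ \textbf{A} $ at $ \textbf{V} = \textbf{I}_p $, each summand contributes $ 2\langle \mathrm{off}(\hat{\textbf{C}}{}^{ij}), \mathrm{off}([\hat{\textbf{C}}{}^{ij}, \textbf{A}])\rangle_\mathrm{F} = O_p(n^{-1})\|\textbf{A}\|_\mathrm{F} $. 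Hence the gradient difference of the two objectives at the identity is $ O_p(n^{-1}) $, so $ \sqrt n $ times this difference is $ o_p(1) $; similarly, at the population level the contribution of the extra summands to the Hessian at $ \textbf{V} = \textbf{I}_p $ vanishes since $ \textbf{C}^{ij}[\textbf{Z}] = \textbf{0} $ for $ i \neq j $, so both objectives share the same positive-definite limiting Hessian $ H $, namely the one that drives the limit theory of TJADE in \cite{virta2017jade}. A standard M-estimation linearization of joint diagonalization on the orthogonal group (following the arguments for vectorial $ k $-JADE in \cite{miettinen2013fast} and for TJADE in \cite{virta2017jade}) then yields
\[
\sqrt{n}(\hat{\textbf{V}}{}^\bullet - \textbf{I}_p) = -H^{-1}\sqrt{n}\,\nabla\hat{T}_n^\bullet(\textbf{I}_p) + o_p(1), \qquad \bullet \in \{k, \textnormal{J}\},
\]
and subtracting gives $ \sqrt{n}(\hat{\textbf{V}}{}^k - \hat{\textbf{V}}{}^\textnormal{J}) = o_p(1) $. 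Since both estimators post-multiply these rotations by the same whitening factor $ \hat{\boldsymbol{\Sigma}}{}_1^{-1/2} $, which is itself $ \sqrt n $-consistent for $ \textbf{I}_p $, the conclusion $ \sqrt{n}(\hat{\boldsymbol{\Gamma}}{}^k - \hat{\boldsymbol{\Gamma}}{}^\textnormal{J}) = o_p(1) $ follows, which is the claim.

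The delicate point I expect to be the main obstacle concerns the preliminary TFOBI rotation $ \hat{\textbf{V}}{}^\textnormal{F} $ that is built into $ \hat{\boldsymbol{\Gamma}}{}^k $ but not $ \hat{\boldsymbol{\Gamma}}{}^\textnormal{J} $. Under Assumption~\ref{assu:FOBI}$(v)$ with $ v \geq 2 $, TFOBI is not Fisher-consistent for $ \textbf{I}_p $: it resolves the mixing only up to orthogonal rotations within each block of indices sharing a common row-mean kurtosis. The analysis must therefore keep track of this ordered block structure and verify that the restricted set $ \mathcal{C}_k $, which by the assumption $ k \geq v $ contains enough cumulant matrices to resolve each such block (exactly as exploited in Theorem \ref{theo:functional}), produces compensating intra-block rotations so that the product $ \hat{\textbf{V}}{}^\textnormal{F} \hat{\textbf{V}}{}^k $ converges to $ \textbf{I}_p $ at the $ \sqrt n $-rate; only once this is established does the linearization above make sense in the form given.
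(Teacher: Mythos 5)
Your high-level strategy matches the paper's: show that the cumulant matrices excluded by the restriction $ |i-j| \geq k $ contribute only negligibly to the first-order conditions, so that the $ k $-TJADE rotation solves the TJADE estimating equations up to $ o_p(n^{-1/2}) $, and conclude via a common linearization. There is, however, a genuine gap, and it is exactly the point you flag at the end but leave unresolved. Your central estimate --- that each excluded summand contributes $ \mathcal{O}_p(n^{-1}) $ to the gradient because $ \hat{\textbf{C}}{}^{ij} = \mathcal{O}_p(n^{-1/2}) $ for $ i \neq j $ --- is valid only for the cumulant matrices of the (asymptotically) unmixed data. The $ k $-TJADE objective is built from $ \hat{\textbf{C}}{}^{ij}[\hat{\textbf{H}} \textbf{Y}_N] $, the cumulant matrices of the TFOBI-rotated data, and the index restriction $ |i-j| < k $ is imposed in that rotated frame. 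By Lemma \ref{lem:H_simplification}, $ \hat{\textbf{H}}{}\tprime \hat{\textbf{C}}{}^{ij}[\hat{\textbf{H}} \textbf{Y}_N] \hat{\textbf{H}} = \sum_{a,b} \hat{h}_{ia}\hat{h}_{jb} \hat{\textbf{C}}{}^{ab}[\textbf{Y}_N] $, which contains the repeated-index terms $ \hat{\textbf{C}}{}^{aa}[\textbf{Y}_N] $ converging to nonzero limits; hence an excluded matrix with $ |i-j| \geq k $ is \emph{not} automatically $ \mathcal{O}_p(n^{-1/2}) $, and your gradient bound does not follow as stated. The paper closes precisely this hole: since $ |i-j| \geq k \geq v $ forces $ i $ and $ j $ into different kurtosis-multiplicity blocks, Lemma \ref{lem:U_convergence} guarantees that at least one of $ \hat{h}_{ia}, \hat{h}_{ja} $ is $ \mathcal{O}_p(n^{-1/2}) $, which is what restores the $ \mathcal{O}_p(1/n) $ bound on the leftover term $ \hat{\textbf{m}} $ in the estimating equations.

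A second, related inaccuracy is that you write the difference of the two objectives as a single sum over $ |i-j| \geq k $ of common matrices. But $ \hat{\boldsymbol{\Gamma}}{}^\textnormal{J} $ uses no TFOBI preprocessing (the paper explicitly notes $ \hat{\boldsymbol{\Gamma}}{}^\textnormal{J} \neq \hat{\boldsymbol{\Gamma}}{}^{p} $ in general), so the two objectives are not built from the same cumulant matrices and cannot be subtracted termwise; the paper instead uses the sum-completion identity following from Lemma \ref{lem:H_simplification} to rewrite the $ k $-TJADE objective as the full TJADE objective in the original frame plus the negligible remainder, and then verifies via the explicit vectorized estimating equations that the resulting linear system has an invertible limit under Assumption \ref{assu:JADE}. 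Until these two points are supplied, the linearization step --- which you yourself note only makes sense once the block-structure issue is handled --- is not justified, so the argument as written is incomplete at its crux.
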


\red{The proofs of Theorem \ref{theo:consistency} and \ref{theo:limiting} are based on the fact that the kurtosis matrices of the latent $ \textbf{Z} $ with non-matching indices vanish asymptotically, $\textbf{C}^{ij}[\textbf{Z}] = \textbf{0}$, $i \neq j$. The key point of the proof is then to show that this property transfers to the corresponding kurtosis matrices $ \textbf{C}^{ij}[\textbf{X}^\textnormal{F}] $, of the TFOBI-standardized observation, to such extent that the asymptotical contribution of the sample estimates of these matrices to the final $ k $-TJADE estimate is negligible. Thus, regardless of the choice of $ k $, the asymptotical behavior of the method is determined by the $ p $ repeated index matrices $\textbf{C}^{11}, \ldots , \textbf{C}^{pp}$. Finally, the proof reveals that the rotation specified by the joint diagonalization dominates over the preliminary TFOBI-rotation and the effect of the latter also turns out to be asymptotically negligible.}

Note that, $\hat{\boldsymbol{\Gamma}}{}^\textnormal{J} = \hat{\boldsymbol{\Gamma}}{}^{p}$ does not generally hold as the latter estimator utilizes the preliminary TFOBI-step while the former does not. The limiting distribution of $ \sqrt{n} (\hat{\boldsymbol{\Gamma}}{}^\textnormal{J} - \textbf{I}_{p} )  $ is in \cite{virta2017jade} shown to be multivariate normal and closed form expressions for its limiting variances are also given therein. By the orthogonal equivariance of matrix independent component functionals (property (ii) of Definition \ref{def:ic_functional}) the limiting results of Theorem \ref{theo:limiting} generalize to any orthogonal mixing matrices. Note that the original $ k $-JADE in \cite{miettinen2013fast} is affine equivariant (equivariant under all coordinate system changes, not just orthogonal). The problem of achieving affine equivariance in the context of tensorial ICA is discussed in \cite{virta2017jade}. There it was conjectured that tensorial ICA cannot be affine equivariant.

The two limiting theorems above show that, under suitable assumptions, $ k $-TJADE indeed has all the desirable properties listed at the beginning of this section. This can be summarized by saying that $ k $-TJADE makes a trade-off between assumptions and computational load: with the price of added assumptions, we obtain a method with the same limiting efficiency as TJADE, but with significantly lighter computational burden. As the claim about efficiency holds only asymptotically, we conduct a simulation study (in Section \ref{sec:examples}) to compare the finite-sample efficiency of the estimators.

\red{Note that the maximal kurtosis multiplicity $ \nu $ is unknown in practice, which makes the choosing of $ k $ such that $ k \geq \nu $ a non-trivial task. However, the simulations of Section \ref{sec:examples} reveal that not a lot of separation efficiency is necessarily lost when using a slightly too small value of $ k $, making the problem less dire. To further alleviate the issue, in the final part of Section \ref{sec:examples}, we propose a procedure for estimating $ \nu $ and apply it to hand-written digit data.}

\section{A note on tensorial ICA}\label{sec:tensors}

In this section, we formulate the general tensorial IC model and discuss how it can be reduced to the matricial IC model. We begin with a short review of the basic concepts of multilinear algebra.

An $ r $th order tensor $ \ten{X} = (x_{i_1 \cdots i_r}) \in \mathcal{R}^{p_1 \times \cdots \times p_r}$ is an $ r $-dimensional array containing a total of $ \rho = \prod_{m = 1}^r p_m $ elements and has a total of $ r $ \textit{modes} or \textit{ways}, that is, directions from which we can view it. For example, a matrix ($ r = 2 $) can be viewed either through its columns or through its rows. Two complementary ways of dividing a tensor into a disjoint collection of smaller tensors are called the $ m $-mode vectors and the $ m $-mode faces. In the former, we choose an index $ m = 1, \ldots , r$ and have the values of the indices $ \{1, \ldots , r \} \setminus \{ m \} $ fixed and let the $ m $th index vary over its range. Each fixed combination of the $ r - 1 $ indices then yields a single $ p_m $-dimensional vector and the collection of all $ \rho/p_m $ such vectors is called the set of $ m $-mode vectors of $ \ten{X} $. On the other hand, if we fix the value of the $ m $th index and let the others vary over their ranges, we get a total of $ p_m $ tensors of order $ r - 1 $ and size $ p_1 \times \cdots \times p_{m-1} \times p_{m+1} \times \cdots \times p_r $, called the $ m $-mode faces of $ \ten{X} $. Illustrations of both, $ m $-mode vectors and $ m $-mode faces, $ m = 1, 2, 3 $, in the case of a 3-dimensional tensor, are shown in Figures \ref{fig:tensor_modes} and \ref{fig:tensor_faces}.

\begin{figure}[t]
	\begin{center}
		\includegraphics[width=1\textwidth]{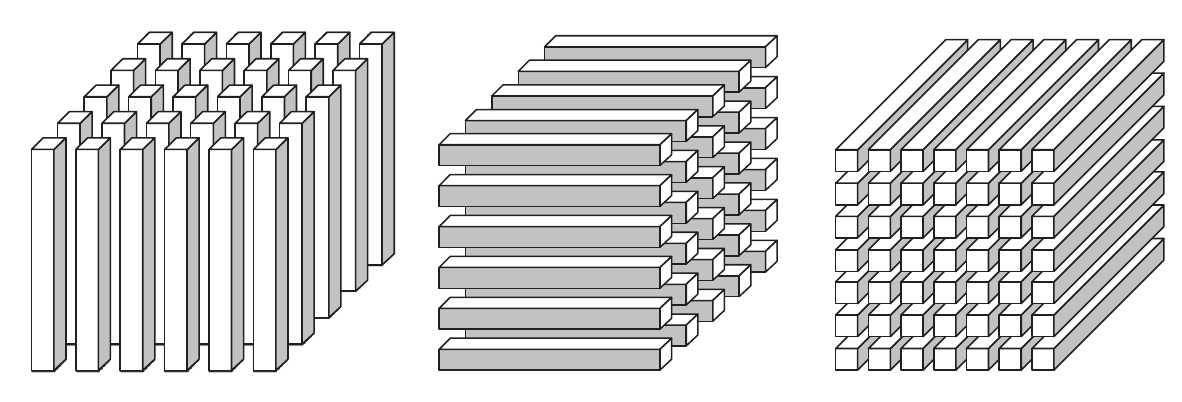}
	\end{center}
	\caption{The 1-mode, 2-mode and 3-mode vectors of a 3-dimensional tensor. Elina Vartiainen\textsuperscript{\copyright}. }
	\label{fig:tensor_modes}
\end{figure}

\begin{figure}[t]
	\begin{center}
		\includegraphics[width=1\textwidth]{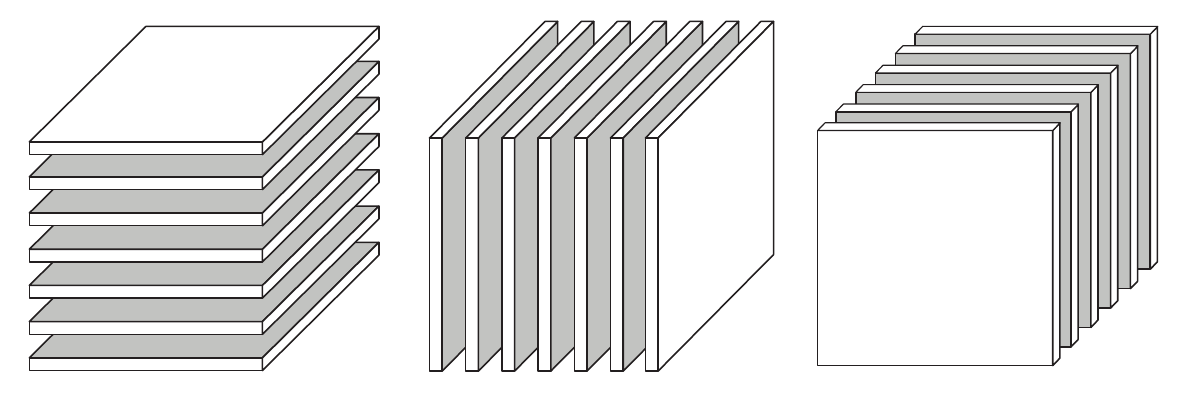}
	\end{center}
	\caption{The 1-mode, 2-mode and 3-mode faces of a 3-dimensional tensor.  Elina Vartiainen\textsuperscript{\copyright}.}
	\label{fig:tensor_faces}
\end{figure}

Two algebraic operations can now be defined in terms of the $ m $-mode vectors. Let $ \ten{X} \in \mathcal{R}^{p_1 \times \cdots \times p_r} $ and $ \textbf{A}_m = (a^{(m)}_{i_m j_m}) \in \mathcal{R}^{q_m \times p_m} $ for some fixed $ m = 1, \ldots , r $. The $ m $-mode multiplication $ \ten{X} \times_m \textbf{A}_m \in \mathcal{R}^{p_1 \times \cdots \times p_{m - 1} \times q_m \times p_{m + 1} \times \cdots \times p_r } $ of $ \ten{X} $ from the $ m $th mode by $ \textbf{A}_m $ is defined element-wise as,
\[ 
(\ten{X} \times_m \textbf{A}_m)_{i_1, \ldots, i_r} = \sum_{j_m = 1}^{p_m} x_{i_1 \cdots i_{m - 1} j_m i_{m + 1} \cdots i_r} a^{(m)}_{i_m j_m}. 
\]
The $ m $-mode multiplication is easily understood: $\ten{X} \times_m \textbf{A}_m$ is obtained by multiplying each $ m $-mode vector of $ \ten{X} $ from the left by $ \textbf{A}_m $ and collecting the resulting vectors back into an $ r $-dimensional tensor in the same order. The $ m $-mode multiplications $ \times_1 , \ldots , \times_r$ from distinct modes are commutative and we use the shorthand $ \ten{X} \times_{m=1}^r \textbf{A}_m = \ten{X} \times_1 \textbf{A}_1 \cdots \times_r \textbf{A}_r $. In the case of matrices, $ \ten{X} = \textbf{X} \in \mathcal{R}^{p_1 \times p_2} $ $(r = 2)$, the simultaneous multiplication from both modes simply gives $ \textbf{X} \times_{m=1}^2 \textbf{A}_m = \textbf{A}_1 \textbf{X} \textbf{A}_2\tprime$.

To show the connection between the matricial and tensorial IC methods, we still need the concept of $ m $-mode matricization. For a fixed mode $ m = 1 \ldots ,r $, the $ m $-mode matricization $ \textbf{X}^{(m)} \in \mathcal{R}^{p_m \times \rho/p_m} $ of $ \ten{X} $ is obtained by taking the $ m $-mode vectors of $ \ten{X} $ and collecting them horizontally into a wide matrix. The arrangement order of the $ m $-mode vectors has no effect on our further developments, and we choose to use the cyclical ordering as in \cite{de2000multilinear}. In this case the relationship,
\begin{align}\label{eq:matricization}
(\ten{X} \times_m \textbf{A}_m)^{(m)} = \textbf{A}_m \textbf{X}^{(m)} (\textbf{A}_{m + 1} \otimes \cdots \otimes \textbf{A}_r \otimes \textbf{A}_1 \otimes \cdots \otimes \textbf{A}_{m - 1})\tprime
\end{align}
holds. For more comprehensive introduction to multilinear algebra, see \cite{ cichocki2009nonnegative, de2000multilinear}.

We are now sufficiently equipped to examine the connection between the matricial ICA and the tensorial ICA. The zero-mean $ r $th order random tensor $ \ten{X} \in \mathcal{R}^{p_1 \times \cdots \times p_r}$ is said to follow the tensorial independent component model if
\begin{align}\label{eq:tensor_model} 
\ten{X} = \ten{Z} \times_{m=1}^r \boldsymbol{\Omega}_m,
\end{align}
where the random tensor $\ten{Z} \in \mathcal{R}^{p_1 \times \cdots \times p_r}$ has mutually independent, marginally standardized components and $ \boldsymbol{\Omega}_m \in \mathcal{R}^{p_m \times p_m}$, $ m = 1, \ldots ,r $, are invertible \citep{virta2017independent}. We further assume that for each mode $ m = 1, \ldots ,r$ at most one $ m $-mode face of $ \ten{Z} $ consists solely of normally distributed components. The objective of tensorial independent component analysis is to estimate $ \ten{Z} $ given a random sample of observed tensors from the distribution of $ \ten{X} $.

We fix the mode $ m = 1, \ldots , r$ and consider the $ m $-mode matricization of the model in Eq. \eqref{eq:tensor_model}. It now follows from Eq. \eqref{eq:matricization} that
\begin{align}\label{eq:tensor_model_matricization}
\textbf{X}^{(m)} =  \boldsymbol{\Omega}_m \textbf{Z}^{(m)} (\boldsymbol{\Omega}_{m + 1} \otimes \cdots \otimes \boldsymbol{\Omega}_r \otimes \boldsymbol{\Omega}_1 \otimes \cdots \otimes \boldsymbol{\Omega}_{m - 1})\tprime.
\end{align}
As Kronecker products of invertible matrices are themselves invertible, a comparison to Eq. \eqref{eq:IC_model} now reveals that Eq. \eqref{eq:tensor_model_matricization} is in the form of a matrix IC model with $ \boldsymbol{\Omega}_m $ replacing $ \boldsymbol{\Omega}_1 $ and $ \boldsymbol{\Omega}_{m + 1} \otimes \cdots \otimes \boldsymbol{\Omega}_r \otimes \boldsymbol{\Omega}_1 \otimes \cdots \otimes \boldsymbol{\Omega}_{m - 1} $ taking the role of $ \boldsymbol{\Omega}_2 $. In addition, $ \textbf{Z}^{(m)} $ satisfies all the assumptions of the matrix IC model. \red{To be precise, it is possible that multiple columns of $ \textbf{Z}^{(m)} $ have multivariate normal distributions as a consequence of the matricization. However, as the $ m $-mode matricization leaves the structure of the $ m $th mode intact, the normality assumption on $ \ten{Z} $ guarantees that at most one row of $ \textbf{Z}^{(m)} $ has a multivariate normal distribution, which is then sufficient for the identifiability of $ \boldsymbol{\Omega}_m $, the current parameter of interest.} Thus, using Eq. \eqref{eq:tensor_model_matricization}, we can estimate $ \boldsymbol{\Omega}_m $ exactly as $ \boldsymbol{\Omega}_1 $ in the matricial case. In the tensorial case, the kurtosis means in the vector $ \boldsymbol{\kappa} $ in Assumption \ref{assu:JADE} and in Assumption \ref{assu:FOBI}$ (v) $ are computed over the rows of $ \textbf{Z}^{(m)} $, or equivalently, over the $ m $-mode faces of $ \ten{Z} $. All our theoretical results, such as orthogonal equivariance (a Kronecker product of orthogonal matrices is itself orthogonal) and the asymptotic variances, hold fully under the tensorial IC model, as long as the relevant kurtosis assumptions are satisfied. \red{Finally, we remind that for an $ r $th order input tensor $ \ten{X} $, the $ k $-TJADE has a total of $ r $ tuning parameters, $ k_1, \ldots , k_r $, with the underlying idea that $ k_m $ should be chosen to be equal to or larger than the maximal kurtosis multiplicity in the corresponding mode, see Theorem \ref{theo:limiting}.}

\section{Simulations and examples}\label{sec:examples}
In this section, we illustrate the finite sample properties of  the tensor-valued procedures TFOBI, TJADE and the novel $k$-TJADE, which are implemented in the R-package \textit{tensorBSS} \citep{virta2016tensorbss}. For comparison, we also consider the classical vector-valued versions of these estimators, denoted by VFOBI, VJADE and $k$-VJADE, as implemented in the JADE package \citep{miettinen2017blind}.  In the classical procedures, the tensor-valued observations are first vectorized and the algorithms are then applied to the resulting data matrices. 

Next, we consider a collection of observed i.i.d. tensors  $\{\ten{X}_j\}_{j\in\{1,\ldots n\}}$, generated from the tensorial independent component model, such that $\ten{X}_j \in \mathcal{R}^{p_1\times \cdots \times p_r}$, for every $j$. Let $\boldsymbol{\Omega}_1, \ldots, \boldsymbol{\Omega}_r$ be the theoretical mixing matrices and let $\hat{\boldsymbol{\Gamma}}_1, \ldots, \hat{\boldsymbol{\Gamma}}_r$ be the corresponding unmixing estimates produced by one of the tensor-valued procedures. We denote the Kronecker products of the matrices as $\hat{\boldsymbol{\Gamma}} = \hat{\boldsymbol{\Gamma}}_1 \otimes \ldots \otimes \hat{\boldsymbol{\Gamma}}_r $ and $\boldsymbol{\Omega} = \boldsymbol{\Omega}_1 \otimes \ldots \otimes \boldsymbol{\Omega}_r $. The vector-valued procedures produce a single unmixing estimate, denoted also by $\hat{\boldsymbol{\Gamma}}$.
Note that the estimates produced by the vectorial and the tensorial methods are comparable, since in both cases the matrix $\hat{\boldsymbol{\Gamma}}$ estimates the inverse of the compound matrix $\boldsymbol{\Omega}$.

The so-called gain matrix is defined as $\hat{\textbf{G}} = \hat{\boldsymbol{\Gamma}}   \boldsymbol{\Omega} $.  The unmixing is considered successful if the gain matrix is close to the identity matrix, up to the order and signs of its rows.  We quantify this closeness using the performance measure called minimum distance (MD) index \citep{ilmonen2010new}. The MD index is formulated as follows,
\begin{align}
\label{eq:mdindex}
D(\hat{\textbf{G}}) = \frac{1}{\sqrt{\rho - 1}} \underset{\textbf{C} \in \mathcal{C}_0}\inf \left\| \textbf{C} \hat{\textbf{G}} - \textbf{I}_{\rho} \right\|_\textnormal{F},
\end{align}
where $\rho = \prod_{j=1}^r p_j $ and  $ \mathcal{C}_0 $ is the set of all matrices with exactly one non-zero element in each row and column. The range of the MD index is $ [0, 1] $, where the value $ 0 $ corresponds to the case of the gain matrix being exactly a permuted and scaled identity matrix, i.e. the estimate is perfectly accurate. Additionally, the limiting distribution of the MD index   can be obtained from the limiting distribution of the corresponding IC functional $ \hat{\boldsymbol{\Gamma}} $, see \cite[Theorem 1]{ilmonen2010new} and \cite[Theorem 6]{virta2017independent}. 

In simulation studies, where multiple iterations are performed under identical conditions, the asymptotic value for the mean of the transformed MD index $ n (\rho - 1) D(\hat{\textbf{G}})^2$ can be obtained using the limiting variances of  the applied IC functionals, see  \cite{virta2017independent} for further details. The convergence towards the theoretical limiting values given  by Theorem \ref{theo:limiting} can then be demonstrated by  visualizing the theoretical limits alongside $\ell_i^{-1}\sum_{j=1}^{\ell_i} n_i (\rho - 1) D(\hat{\textbf{G}}_j)^2$, where $\ell_i$ is the number of  iterations for the sample size $n_i$ and $ \hat{\textbf{G}}_{j} $ is the estimated gain matrix for the corresponding $ j $th iteration.

Before presenting the simulations, we discuss shortly the importance of the tuning parameter $k$, which in $k$-TJADE can be chosen separately for each mode. In the following, $(k_1, \ldots, k_r)$-TJADE is used to refer to $k$-TJADE with the value $k_m$ for the tuning parameter in the $m$th mode, $m \in \{ 1, \ldots , r \}$. Based on Theorem \ref{theo:limiting}, the computationally cheapest but still asymptotically optimal choice is the largest multiplicity of the kurtosis means in the current mode. This means that we have generally three choices for each mode: we may use the full TJADE if the mode is short; we can use $k$-TJADE for some small $k$ if the mode is long; or we can choose not to separate the mode at all if it is not expected to contain any relevant information, as might be the case, e.g., for the color dimension of a sample of images.

\subsection{Finite-sample efficiency, setting 1}\label{sub:perf1}
We begin by demonstrating that the finite sample performance of $k$-TJADE is in line with the asymptotic results given in Theorem \ref{theo:limiting}. In the first setting, we consider  simulated collections of i.i.d. matrices of size $3\times 3$, $ \textbf{Z} \coloneqq \{\textbf{Z}_j\}_{j\in \{1,\ldots, n\}}$. The components of every $\textbf{Z}_j$  are simulated independently from the   following distributions,
\begin{align*}
\textbf{Z}_j =
\begin{pmatrix}
\texttt{E} & \texttt{C} & \texttt{U} \\
\texttt{C} & \texttt{U} & \texttt{E} \\
\texttt{U} & \texttt{E} & \texttt{N} 
\end{pmatrix},
\end{align*}
where \texttt{C},  \texttt{E}, \texttt{U} and \texttt{N} denote independent replicates from the $\chi^2_1$, standard exponential, uniform and normal distribution, respectively, all scaled to have zero mean and unit variance.

In this simulation setting, none of the theoretical row or column kurtosis means are zero. However, the theoretical mean kurtoses are the same for the first two rows and the first two columns. Thus, the preferable $(k_1,k_2)$-TJADE procedure here is 22-TJADE, $ k_1 = k_2 = 2 $. Note that in this setting the requirements of TFOBI are not fulfilled.

When the observations are vectorized, the length nine vectors contain a single normally distributed component, two $ \chi^2 $-distributed elements, three elements from the exponential distribution and three elements from the uniform distribution. The vector now contains one element (the normally distributed component) with theoretical kurtosis 0, making vectorial ICA viable. The most natural $k$-VJADE procedure here is 3-VJADE. The assumptions of VFOBI are violated.

The simulation was performed using 13 different sample sizes, $n_i =2^{i-1}\cdot 1000$, $i\in\{1,\ldots13\}$ with $\ell=2000$ repetitions per sample size.
To evaluate the equivariance properties and the effect of non-orthogonal mixing,
we mixed the observations at each repetition using (i) identity mixing: $\boldsymbol{\Omega}_1=\boldsymbol{\Omega}_2 = \boldsymbol{I}_p$, (ii) orthogonal mixing: \red{$\boldsymbol{\Omega}_1=\boldsymbol{U}_1$ and $\boldsymbol{\Omega}_2=\boldsymbol{U}_2$, where $\boldsymbol{U}_1$ and $\boldsymbol{U}_2$} are random orthogonal matrices uniformly sampled at each repetition with respect to the Haar measure and (iii) normal mixing, where $\boldsymbol{\Omega}_1$ and $\boldsymbol{\Omega}_2$ were filled at each repetition with random elements from $N(0,1)$.

To evaluate the effect of the mixing matrix and the limiting distributions, we present, in Figure~\ref{fig:set1}, the transformed MD values for 3-VJADE, VJADE, 22-TJADE and TJADE and the corresponding theoretical limit values for the cases available. \red{As our theory extends only the orthogonal mixing, the theoretical limit value of TJADE is missing from the third panel.} The sample sizes $n_{i}$, $i \geq 8$, are omitted from Figure \ref{fig:set1}, since the sample size $64\cdot 10^3$  is large enough for the convergence of both the vectorial and the tensorial  methods.

Figure~\ref{fig:set1} clearly shows that for the vectorial methods, we have affine invariance and thus all of the three curves are identical. For the tensorial methods, we have only orthogonal invariance and hereby the curve corresponding to the normal mixing differs from the other two. \red{Moreover, despite the limiting value of TJADE being unknown for normal mixing, based on the finite-sample curves, it seems that normal mixing does not change the relative separation accuracy of the tensorial methods over orthogonal mixing. Only the overall separation difficulty level is affected (the curves are higher in the plots).} \red{Furthermore,} the benefit of applying the tensorial methods over the vectorized methods is impressive. Finally, Figure~\ref{fig:set1} also illustrates that all the methods converge to the theoretic values and that the $ k $-TJADE versions have the same limiting values as TJADE. 

\begin{figure}[t!]
	\begin{center}
		\includegraphics[width=1\textwidth]{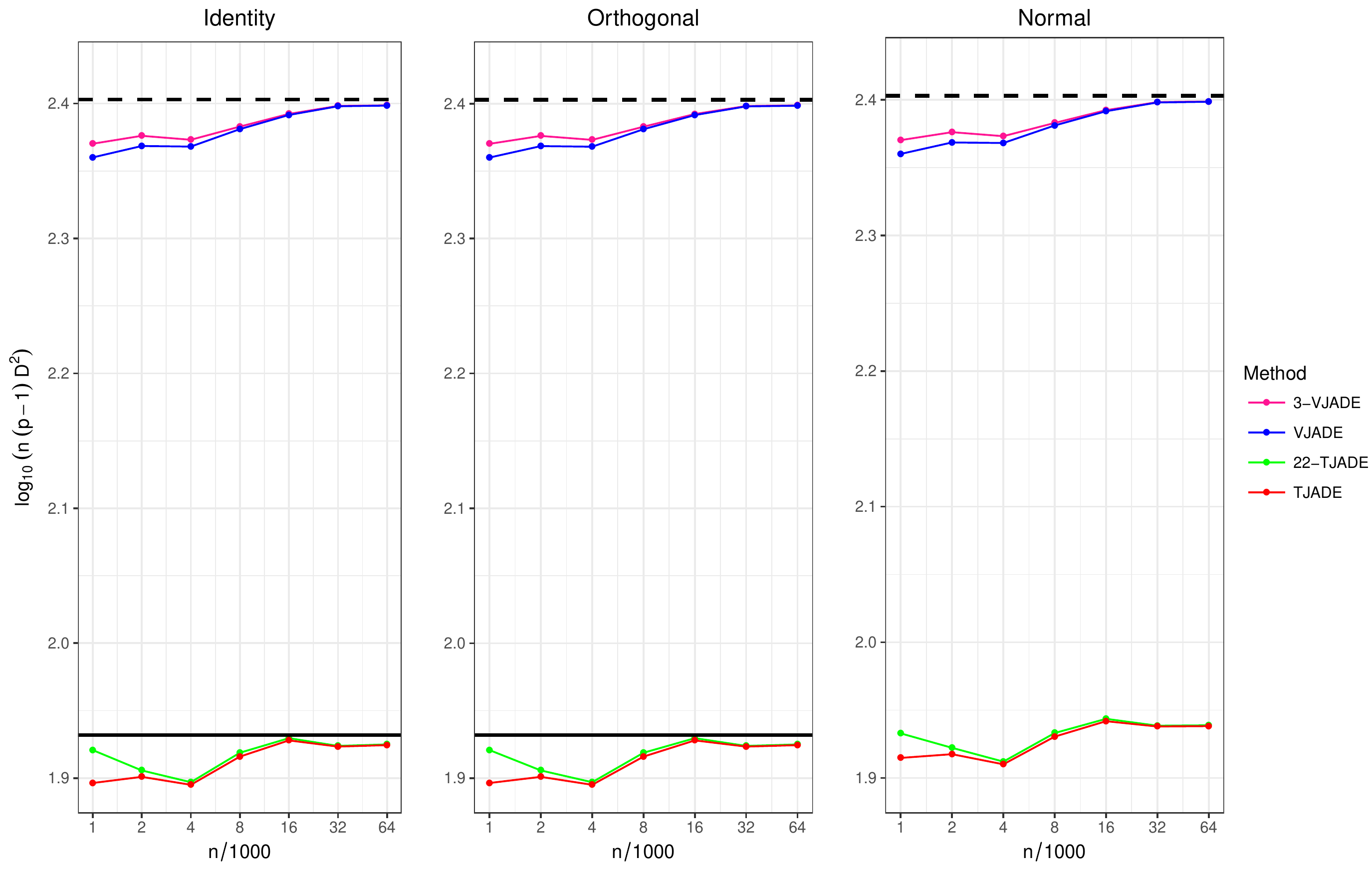}
	\end{center}
	\caption{Means of the transformed MD indices. The solid black line is the limiting value of TJADE (under orthogonal mixing) and the dashed black line is the limiting value of VJADE. \red{The solid line is missing from the third panel as the limiting value of TJADE is not known under normal mixing.} }
	\label{fig:set1}
\end{figure}

We next, under the same setting as above, present the results for seven additional procedures that violate some of the required assumptions: VFOBI, TFOBI, 1-VJADE, 2-VJADE, 11-TJADE, 12-TJADE and 21-TJADE. Note that TFOBI  is the initial step in the $k$-TJADE procedure and hereby the comparison between $k$-TJADE and TFOBI illustrates the added benefit of the additional rotation after the TFOBI-solution. Likewise, the same holds between VFOBI and $k$-VJADE. The resulting mean values of the transformed MD indices are presented in Figure \ref{fig:set1all}, where methods that have almost identical performance are presented using the same colors.

In Figure~\ref{fig:set1all}, TFOBI and VFOBI diverge at an exponential rate. Furthermore, the $(k_1,k_2)$-TJADE procedures that have either $k_1$ or $k_2$ less than the number of distinct kurtosis values, are not converging to anything reasonable, even at sample sizes greater than $4\cdot 10^6$. 
However, in this simulation setting, the $k$-VJADE procedure seems to allow slight deviations from the required assumptions. It seems that 2-VJADE converges to an asymptotic value that is at least close to that of VJADE, see Section \ref{sec:summary} for further discussion. 

To summarize this part of the simulation study: $k$-TJADE works as expected, when the theoretical conditions are met, and the convergence to the asymptotic value is relatively fast. Furthermore, even though $k$-TJADE is not affine invariant, its performance is better under all mixing scenarios, when compared to the  affine invariant  vectorial counterparts.
\begin{figure}[t!]
	\begin{center}
		\includegraphics[width=1\textwidth]{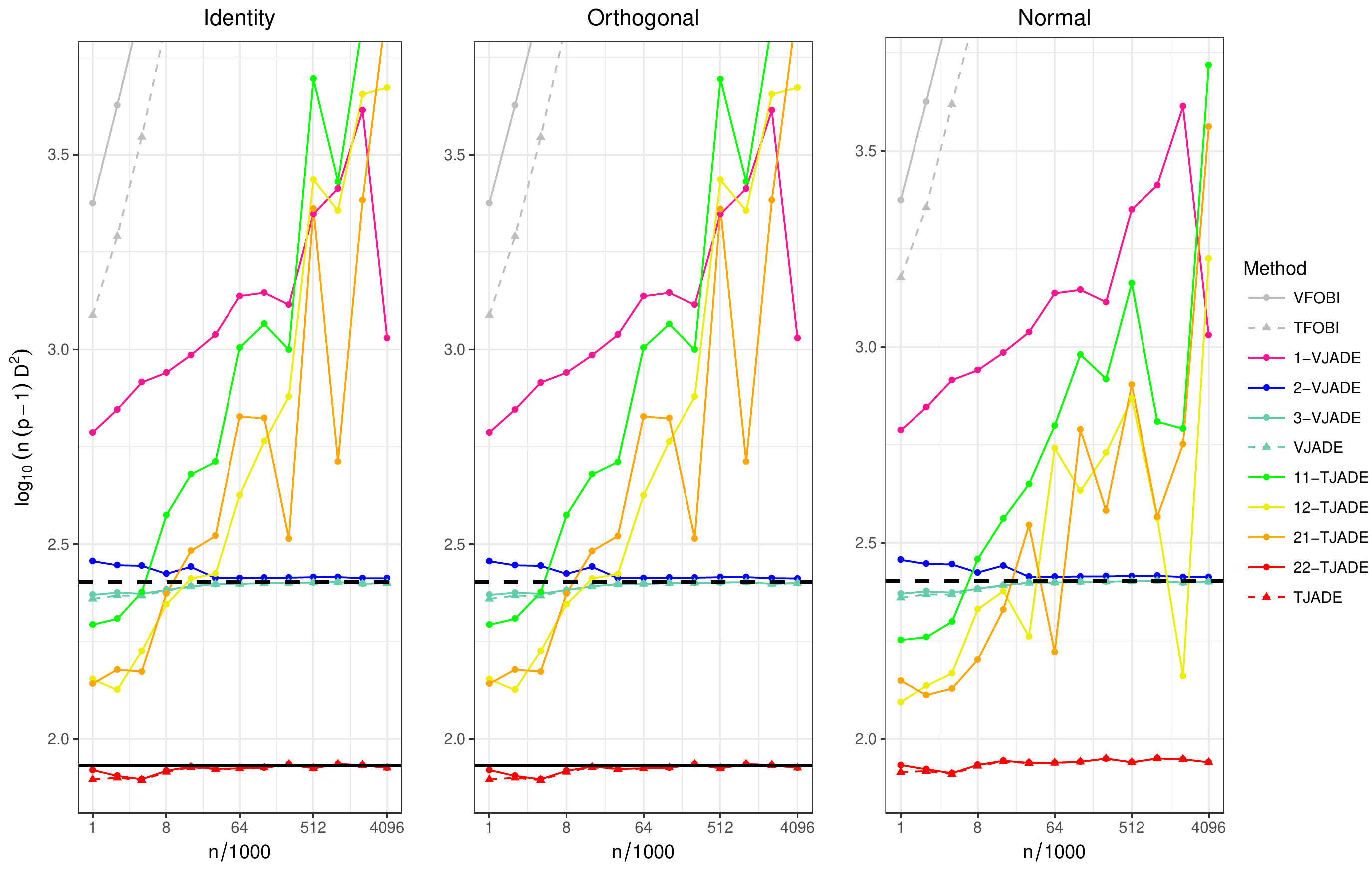}
	\end{center}
	\caption{Means of the transformed MD indices. The solid black line is the limiting value of TJADE (under orthogonal mixing) and the dashed black line is the limiting value of VJADE. \red{The solid line is missing from the third panel as the limiting value of TJADE is not known under normal mixing.}}
	\label{fig:set1all}
\end{figure}

\subsection{Finite-sample efficiency, setting 2}\label{sub:perf2}
In our second simulation, we illustrate unmixing under a tensorial setting. 
We consider  simulated collections of i.i.d. tensors of size $3\times 3 \times 4$, $ \ten{Z} \coloneqq \{\ten{Z}_j\}_{j\in \{1,\ldots, n\}}$. Let $\textbf{Z}^{(3)}_{jk}$ denote the $k$th 3-mode face of $\ten{Z}_{j}$. The components of every $\ten{Z}_j$  are then simulated independently from the   following distributions, 
\begin{align*}
\textbf{Z}^{(3)}_{j1} =
\begin{pmatrix}
\texttt{E} & \texttt{N} & \texttt{N} \\
\texttt{N} & \texttt{U} & \texttt{N} \\
\texttt{N} & \texttt{N} & \texttt{E} 
\end{pmatrix}, \quad
\textbf{Z}^{(3)}_{j2} =
\begin{pmatrix}
\texttt{E} & \texttt{N} & \texttt{N} \\
\texttt{N} & \texttt{U} & \texttt{N} \\
\texttt{N} & \texttt{N} & \texttt{E} 
\end{pmatrix}, \quad
\textbf{Z}^{(3)}_{j3}  =
\begin{pmatrix}
\texttt{E} & \texttt{N} & \texttt{N} \\
\texttt{N} & \texttt{U} & \texttt{N} \\
\texttt{N} & \texttt{N} & \texttt{E} 
\end{pmatrix}, \quad
\textbf{Z}^{(3)}_{j4}  =
\begin{pmatrix}
\texttt{N} & \texttt{U} & \texttt{E} \\
\texttt{E} & \texttt{E} & \texttt{E} \\
\texttt{E} & \texttt{E} & \texttt{N} 
\end{pmatrix},
\end{align*}
where the different distributions are denoted as in Section \ref{sub:perf1}.

All of the mean kurtoses over the different tensor faces are nonzero and none of the theoretical mean kurtoses are the same for the 1-mode faces. Moreover, two of them are the same for the 2-mode faces and three of them are the same for the 3-mode faces. Hereby, the preferable $(k_1,k_2,k_3)$-TJADE here is $123$-TJADE, $ k_1 = 1, k_2 = 2, k_3 = 3 $.
The vectorized versions of the observations contain several normal components and thus the assumptions for the vectorial methods are not satisfied here.

The simulation was performed using 11 different sample sizes, $n_i =2^{i-1}\cdot 1000$, $i\in\{1,\ldots11\}$ and the simulation was repeated 2000 times for each  sample size.
We considered the same three mixing scenarios as in Section \ref{sub:perf1} and generated the mixing matrices in the same way, with the distinction that here we have three mixing matrices instead of two.  We performed the unmixing using VFOBI, TFOBI, 1-VJADE, 2-VJADE, VJADE, TJADE and 11 different versions of $k$-TJADE. The resulting mean values of the transformed MD index, $\ell^{-1}\sum_{j=1}^{\ell} n_i (\rho - 1) D(\hat{\textbf{G}}_j)^2$, where $\ell=2000$ and $\rho =  36$,  are presented in Figure~\ref{fig:set2}. The orthogonal mixing is omitted from  Figure \ref{fig:set2}, since the tensorial methods are orthogonally invariant and the vectorial methods are affine invariant. The performance curves under the orthogonal mixing would be identical to those under the identity mixing, similarly as in Section \ref{sub:perf1}.

The $k$-TJADE performs as expected for the values of $k_1,k_2,k_3$ that satisfy the assumptions and  the convergence towards the theoretical limit is considerably fast. The vectorial methods fail completely in this example. Interestingly, $k$-TJADE has relatively nice performance when the elementwise deviation between  $(k_1,k_2,k_3)$  and (1,2,3) is not too large. See Section \ref{sec:summary} for further discussion. 

\begin{figure}[t!]
	\begin{center}
		\includegraphics[width=1\textwidth]{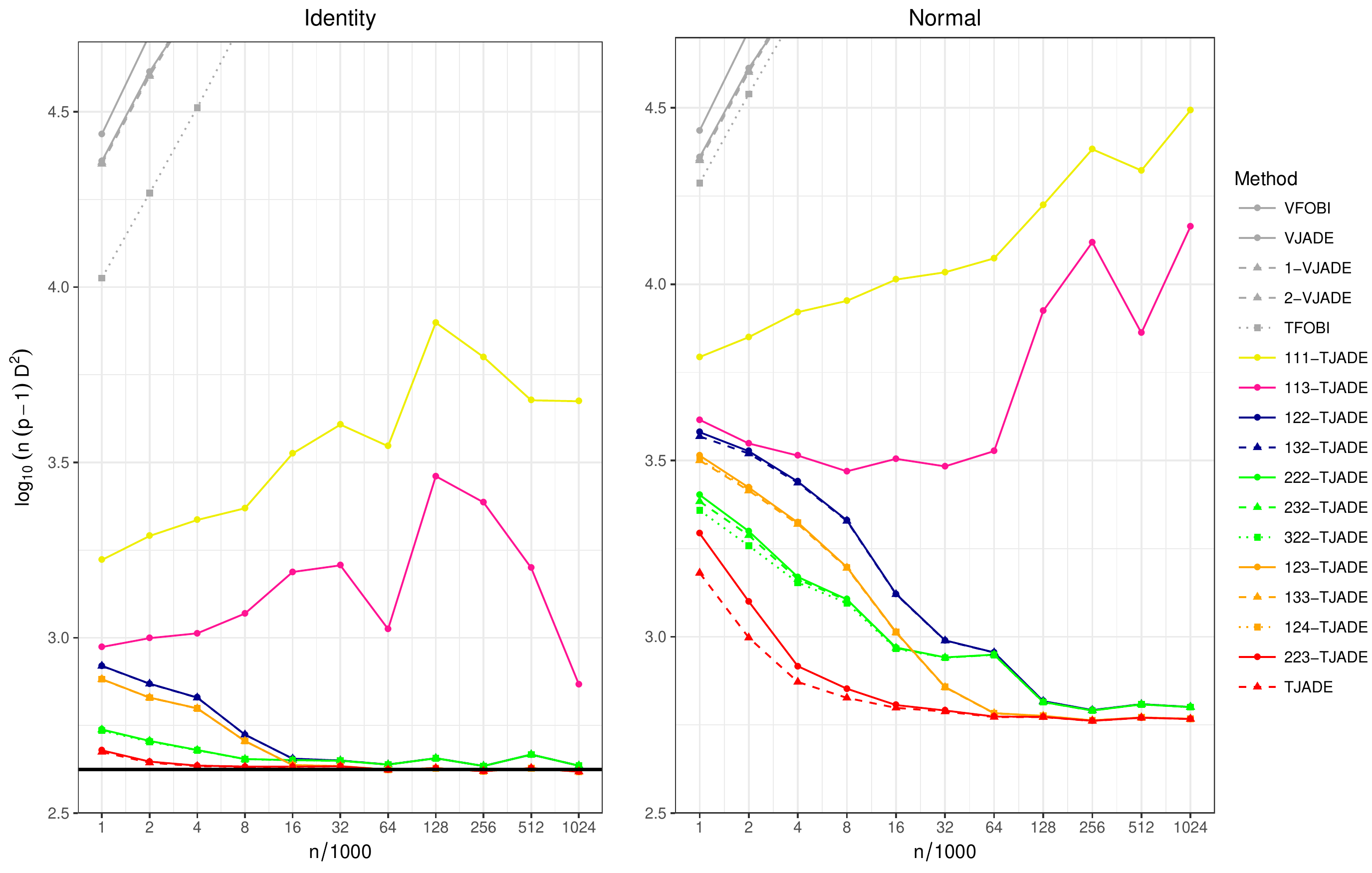}
	\end{center}
	\caption{Means of the transformed MD indices. The solid black line is the limiting value of TJADE (under orthogonal mixing).  }
	\label{fig:set2}
\end{figure}

\joni{
\subsection{Finite-sample efficiency, setting 3}\label{sub:perf3}
In the third simulation setting, we have a collection of i.i.d. matrices  $ \textbf{Z} \coloneqq \{\textbf{Z}_j\}_{j\in \{1,\ldots, n\}}$, with components simulated independently from the following distributions,
\begin{align*}
\textbf{Z}_j =
\begin{pmatrix}
\texttt{G}(0.9999) & \texttt{G}(1) & \texttt{G}(0.9) \\
\texttt{G}(0.9) & \texttt{G}(0.9998) & \texttt{G}(1) \\
\texttt{G}(0.9) & \texttt{G}(1) & \texttt{G}(1) 
\end{pmatrix},
\end{align*}
where $\texttt{G}(\alpha)$ denotes the gamma distribution, with shape parameter $\alpha$ and rate parameter 1, scaled to have zero mean and unit variance. The theoretical row kurtosis means are approximately,
\begin{align*}
\begin{pmatrix}
6.222422 &6.222622 &6.222222
\end{pmatrix},
\end{align*}
and the theoretical column kurtosis means are approximately,
\begin{align*}
\begin{pmatrix}
6.444644 &6.000400 &6.222222
\end{pmatrix}. 
\end{align*}

Hereby, in this simulation setting, the row kurtosis means are very close to each other. They start to differ at the fourth decimal. Also the column kurtosis means are quite close to each other.

The simulation study was conducted exactly as in Section \ref{sub:perf1}, with the distinction that the largest sample size of Section \ref{sub:perf1} was omitted here. Furthermore, we had more versions of $k$-TJADE in this study. The results are displayed in Figure \ref{fig:set3}.

Theoretically, the preferable $(k_1,k_2)$-TJADE would be 11-TJADE, $k_1 = 1, k_2 = 1$. However, with the current sample sizes, 11-TJADE, 12-TJADE and 13-TJADE do not seem to converge, see Figure \ref{fig:set3}. Similarly, 21-TJADE and 31-TJADE exhibit erratic behavior with small sample sizes. Yet, with sample sizes over one million, the corresponding procedures are very close to the theoretical limiting value, given as the solid black line in Figure \ref{fig:set3}. It seems that with sample sizes of magnitude $10^6$, differences in column kurtosis means that are of magnitude $10^{-1}$, are not a problem for the procedures. Conversely, the current sample sizes are not large enough to compensate the small differences in the row kurtosis means. 

The curves corresponding 22-TJADE, 23-TJADE, 32-TJADE and TJADE behave nicely in Figure \ref{fig:set3}, even with relatively small sample sizes. Especially, the \red{good} behavior of 22-TJADE is a little surprising. The corresponding procedure assumes that at most two of the column kurtosis means and at most two of the row kurtosis means are equal. We wish to emphasize that the assumptions required for the $k$-TJADE procedures are only sufficient conditions and it can be that in some special cases, e.g., under some special distributional structures, some of the procedures behave \red{well}, even though the sufficient assumptions are not satisfied. 

In Figure \ref{fig:set3}, the curves that correspond to VFOBI and TFOBI are again only visible in the top left corner. This is unsurprising since the required assumptions are not satisfied for VFOBI, and TFOBI cannot handle the close kurtosis means, even though they are distinct on the population level. With the current sample sizes, 1-VJADE does not seem to converge to the theoretical limiting value, given as the dashed black line. The vectorial 2-VJADE, 3-VJADE and VJADE seem to function quite well in this setup. This could be explained by the fact that the vectorial methods have assumptions directly related to the theoretical kurtoses of the components, whereas the tensorial methods have assumptions related to means of the kurtoses. It is easier to detect small kurtosis differences, when compared to detecting differences in their means. 
}

\begin{figure}[t!]
	\begin{center}
		\includegraphics[width=1\textwidth]{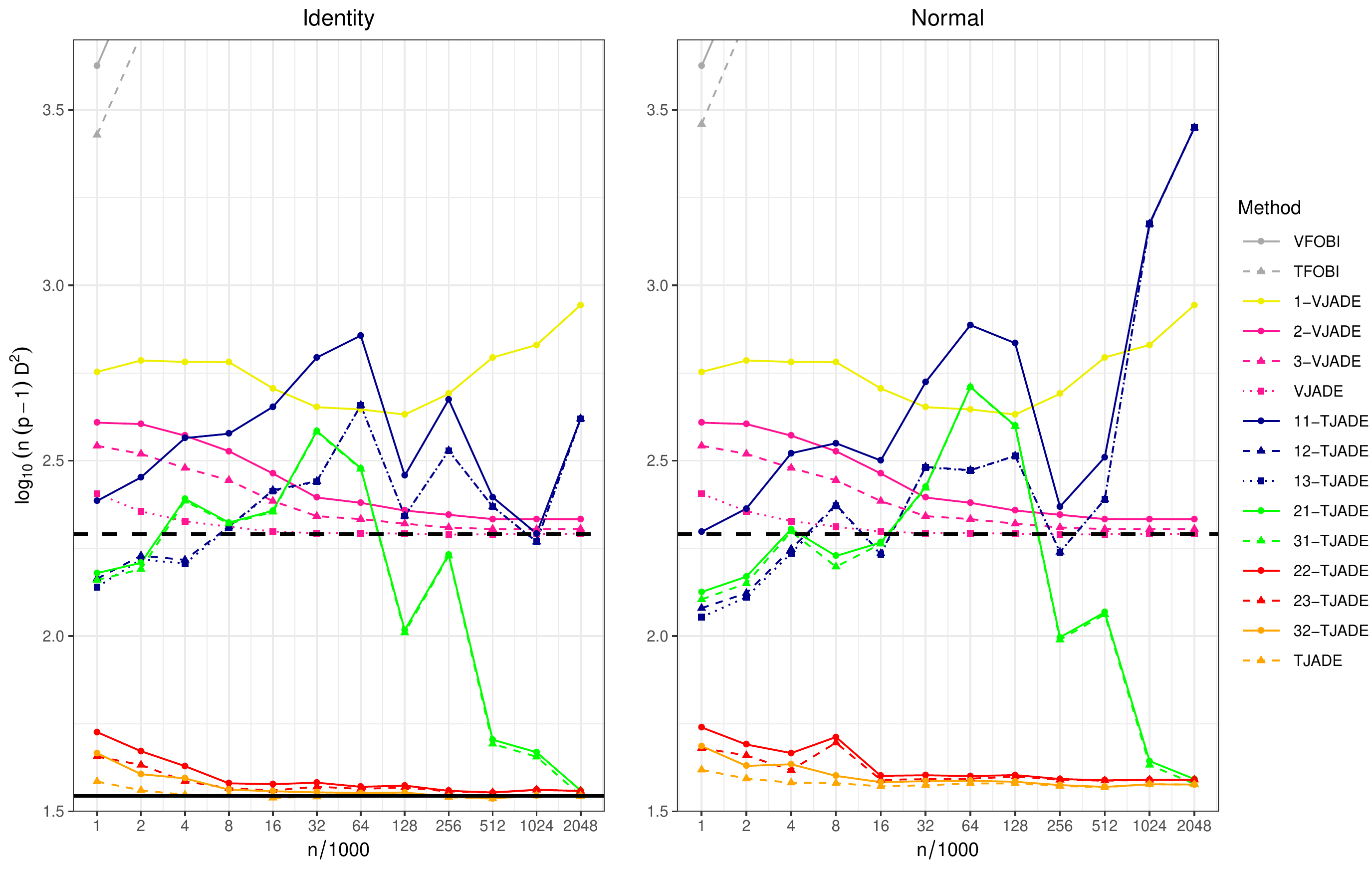}
	\end{center}
	\caption{Means of the transformed MD indices. The solid black line is the limiting value of TJADE (under orthogonal mixing) and the dashed black line is the limiting value of VJADE.}
	\label{fig:set3}
\end{figure}

\subsection{Timing comparison}

The results from Sections \ref{sub:perf1} and \ref{sub:perf2} illustrate that the performances of TJADE and suitably chosen versions of $ k $-TJADE are very similar. Next, we quantify the significant improvement in computational speed that $k$-TJADE provides when compared to TJADE.  In the timing comparison,  we consider a simulated collection of i.i.d. matrices of size $3\times q$, $ \textbf{Z} \coloneqq \{\textbf{Z}_j\}_{j\in \{1,\ldots, n\}}$, such that components of every $\textbf{Z}_j$  are simulated independently from the   following distributions,
\[ 
\textbf{Z}_j =
\begin{pmatrix}
\chi^2_1 & \chi^2_4 & \cdots & \chi^2_{3(q-1)+1}\\
\chi^2_2 & \chi^2_5 & \cdots & \chi^2_{3(q-1)+2}\\
\chi^2_3 & \chi^2_6 & \cdots & \chi^2_{3(q-1)+3}
\end{pmatrix},
\]
where $\chi^2_\nu$ denotes the $ \chi^2 $-squared distribution with $ \nu $ degrees of freedom, and the width $ q $ of the matrix is the varying parameter in this simulation setting.  We used parameter values $ q = 5, 10, 15, 20, \ldots , 50 $ and the sample size $n=1000$. We considered the same procedures as in Section \ref{sub:perf1}: VFOBI, TFOBI, 1-VJADE, 2-VJADE, 3-VJADE, VJADE, 11-TJADE, 12-TJADE, 21-TJADE, 22-TJADE, TJADE and recorded the mean running times over a total of 5 iterations. The time it took R to vectorize the tensors was also considered as a part of the vectorized procedures. However, the time the vectorizing took, was negligible.  We used two alternative stopping criteria for the methods that involve joint diagonalization, that is, for all the methods except for TFOBI and VFOBI.
A single iteration was stopped, if either the converge tolerance of the Jacobi rotation based joint diagonalization was less than $10^{-6}$ or if the required tolerance was not satisfied after 100 iterations.

The average running times in minutes as a function of the dimension $ q $ are presented in Figure \ref{fig:time} and methods that have almost identical computing times, are presented using the same colors. Figure \ref{fig:time} clearly illustrates the superior computation speed of $k$-TJADE, when compared to either TJADE or any of the vectorized counterparts. The timing comparison was conducted on Ubuntu 16.04.4 LTS  with  Intel\textsuperscript{\textregistered} Xeon\textsuperscript{\textregistered} CPU E3-1230 v5 with 3.40GHz and 64GB.

\begin{figure}[t!]
	\begin{center}
		\includegraphics[width=1\textwidth]{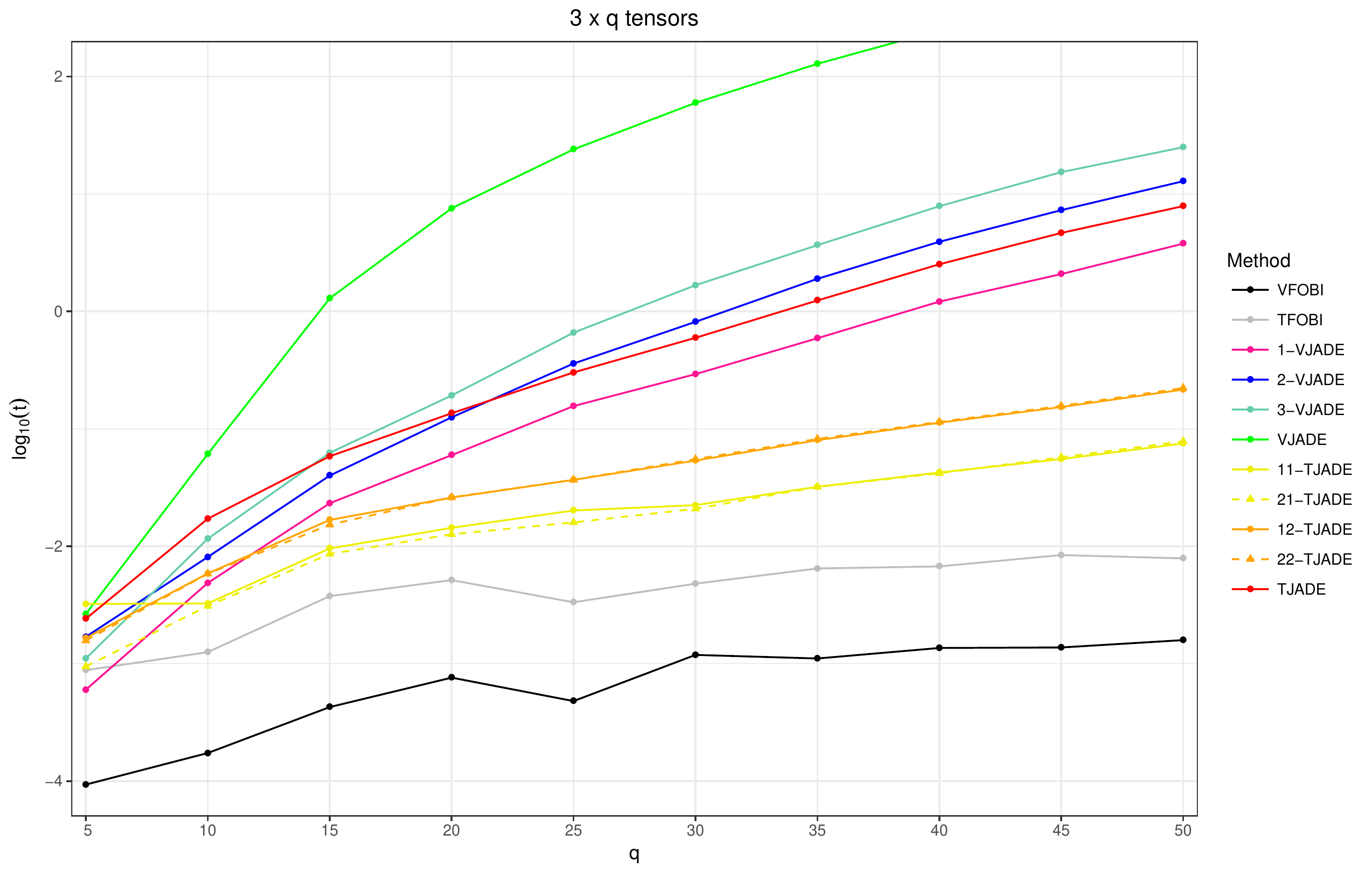}
	\end{center}
	\caption{The logarithms of the mean computation times (in minutes) as a function of the dimension $q$. }
	\label{fig:time}
\end{figure}

\subsection{Video example}

We applied $ k $-TJADE to the WaterSurface surveillance video \citep{li2004statistical} that is viewable at \url{http://pages.cs.wisc.edu/~jiaxu/projects/gosus/supplement/} and available to download as an .Rdata file at \url{https://wis.kuleuven.be/stat/robust/Programs/DO/do-video-data-rdata}. The video has already been used as an example for blind source separation \citep{VirtaNordhausen:2017tnss,virta2017blind}. Each frame of the video is of size $ h \times w \times 3 $ with the height $ h = 128 $, width $ w = 160 $ and a three-variate color channel (RGB). The total video consists of $ 633 $ such frames, making our data a sample of size $ n = 633 $ of random third order tensors in $\mathcal{R}^{h \times w \times 3}$. The data constituting a single continuous surveillance video, the observations are naturally not independent and the assumptions of tensorial independent component analysis are not fully satisfied. However, ICA is known to be robust against deviations from the independence assumption and applying it to sets of dependent data with success is common practice. We thus expect $k$-TJADE to successfully extract components of interest from our current data.

\begin{figure}[t!]
	\begin{center}
		\includegraphics[width=1\textwidth]{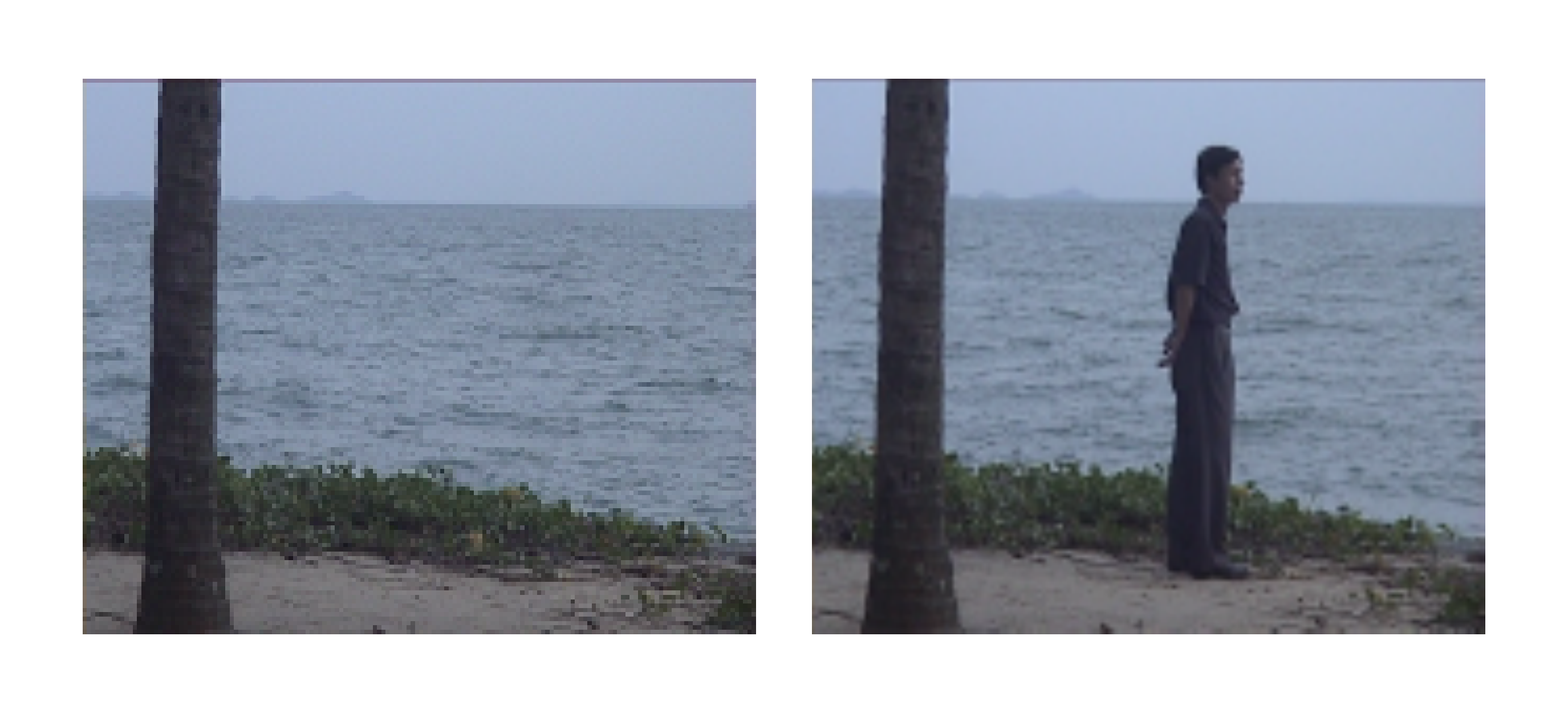}
	\end{center}
	\caption{Frames 50 and 550 of the surveillance video data.}
	\label{fig:frames}
\end{figure}

The video shows a beach scene with little to no motion until frame 480, when a man enters the scene from the left, staying in the picture for the remainder of the video. Figure \ref{fig:frames} shows frames 50 and 550 of the video, illustrating moments before and after the man enters the scene. Our objective with the surveillance video is to find low-dimensional components that allow us to pinpoint the most obvious change point in the video, namely, the man's entrance. As change points are most likely captured in the data as outliers of some form, it is natural to seek the component of interest among those with high kurtosis values.

\begin{figure}[t!]
	\begin{center}
		\includegraphics[width=1\textwidth]{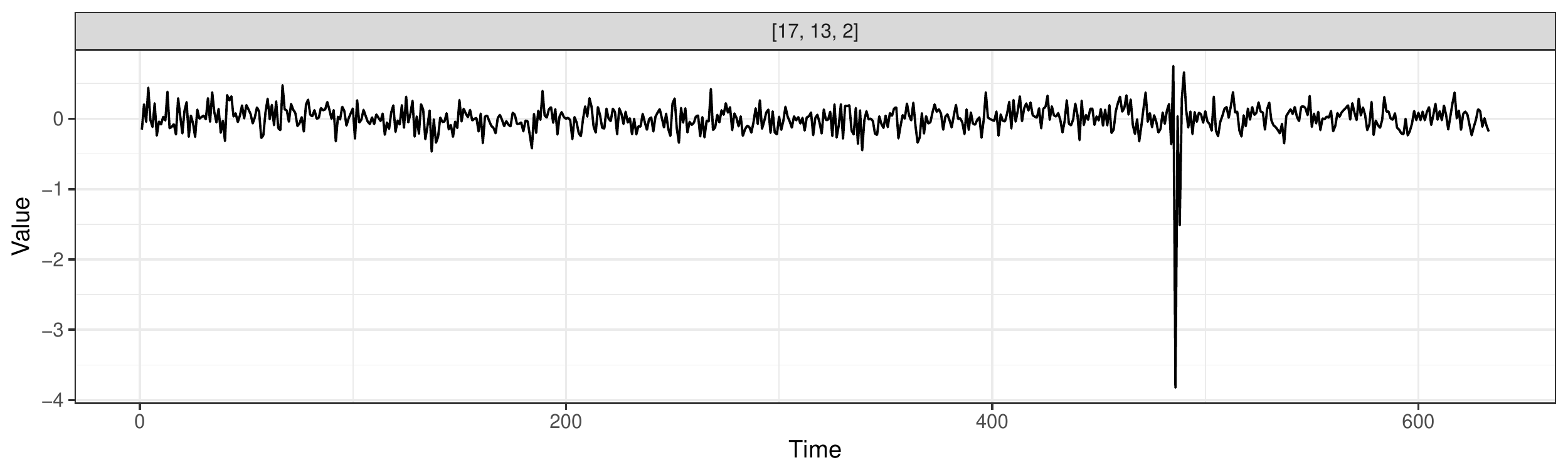}
	\end{center}
	\caption{The source with the highest absolute kurtosis found from the video data using $(1, 1, 0)$-TJADE. The plot caption refers to the indices of the source in the extracted matrix $\textbf{Z}$.}
	\label{fig:sources}
\end{figure}

We proceed as follows: we run $ k $-TJADE on the data with different choices of the tuning parameters, find the component with the highest absolute kurtosis, and plot its time course to visually assess whether it captures the change point. The component found with $ (1, 1, 0) $-TJADE is shown in Figure \ref{fig:sources}, where $ k_3 = 0 $ means that we do not unmix the supposedly uninformative color dimension at all. The time series is instantly seen to capture the desired time point as the spike coincides with the first frames the man spends in the scene. The running time of the method was 39 minutes on a Windows server with Intel\textsuperscript{\textregistered} Xeon\textsuperscript{\textregistered} CPU R5 2440 with 2.40GHz and 64GB. Applying $ (2, 2, 0) $-TJADE gave almost identical results with the increased running time of one hour and 54 minutes. However, the original TJADE proved to be very slow. The running time of the algorithm was over five days. Concluding, the example shows that $ k $-TJADE can be used to reliably extract information from data where TJADE can not be applied due to its extremely high computational cost. In several real world applications, e.g. in crime scene investigation, waiting for days is not an option.

\subsection{\red{Estimation of the maximal kurtosis multiplicity $ \nu $}}

\red{In the previous video example, applying $ (1, 1, 0) $-TJADE already gave satisfactory results in the sense that it allowed us to find a latent component identifying the point of interest in time. However, in some cases, using the preferable value $ k = \nu $ for the tuning parameter might be justified. This will guarantee the successful estimation of all components, while simultaneously keeping computation times and the amount of noise in the estimation minimal. The results of Theorem~\ref{theo:limiting} could possibly be used to formulate an asymptotic hypothesis test for the null hypothesis that $ \nu \leq \nu_0 $ for some given $ \nu_0 $ in a given mode. Namely, if the null hypothesis holds, then the limiting distributions of the unmixing estimates $\hat{\boldsymbol{\Gamma}}{}^{\nu_0}, \hat{\boldsymbol{\Gamma}}{}^{\nu_0 + 1}, \ldots , \hat{\boldsymbol{\Gamma}}{}^{p}$ are identical, allowing us to pin-point the true value of $ \nu $. In the following, we will pursue this idea from an empirical perspective and develop a heuristic procedure for the estimation of $ \nu $.}

\red{Consider a fixed mode of size $ p $ of a sample of data $ \ten{X}_1, \ldots , \ten{X}_n $ from the tensorial IC model in Eq. \eqref{eq:tensor_model}  and let $\hat{\boldsymbol{\Gamma}}{}^{1}, \hat{\boldsymbol{\Gamma}}{}^{2}, \ldots , \hat{\boldsymbol{\Gamma}}{}^{p}$ be the unmixing matrices estimated from the corresponding data, respectively, to $ k $-TJADE with the value of $ k \in \{ 1, \ldots p \} $. Letting $ \nu $ be the true maximal kurtosis multiplicity in the data, we thus expect $ \hat{\boldsymbol{\Gamma}}{}^{\nu} $ to separate the data well but $ \hat{\boldsymbol{\Gamma}}{}^{\nu - 1} $ to leave some parts still unmixed (within the multiplicities). Analogously, we expect $ m_{\nu-1} := D( \hat{\boldsymbol{\Gamma}}{}^{\nu - 1} (\hat{\boldsymbol{\Gamma}}{}^{\nu})^{-1}) $, where $ D(\cdot) $ is the MD index in Eq. \eqref{eq:mdindex}, to be large as the two matrices should have structures differing beyond row permutations, scaling and sign changes (in the extreme case where $ \hat{\boldsymbol{\Gamma}}{}^{\nu - 1} = \hat{\boldsymbol{\Gamma}}{}^{\nu} $ we have $ m_{\nu - 1} = 0 $). On the other hand, we expect all the following sequential MD indices, $ m_{\nu + \ell} $, $ \ell \in \{ 0, \ldots , p - \nu - 1 \} $, to be small as all values of $ k \in \{ \nu, \ldots, p \} $ are sufficient for the separation, and yield asymptotically equal solutions.
}

\red{The true value of $ \nu $ can now be located by plotting $ m_k $ versus $ k \in \{ 1, \ldots , p - 1 \}$ and finding the value, starting from which the curve stays roughly constant. However, since using a single sequential MD index per value of $ k $ might leave us with a curve that is not smooth enough to accurately distinguish the true $ \nu $, we compute, for each $ k \in \{ 1, \ldots , p - 1 \} $, the average of the forward sequential MD indices:
\[ 
m^*_k := \frac{1}{p - k} \sum_{\ell = 1}^{p - k} D( \hat{\boldsymbol{\Gamma}}{}^{k} (\hat{\boldsymbol{\Gamma}}{}^{k + \ell})^{-1}).	
\]
The plot of $ m^*_k $ versus $ k $ can be used to find the true value of $ \nu $ similarly as with the plot of $ m_k $ versus $ k $, and has the interpretation of quantifying the efficiency loss one encounters when using a particular value of $ k $, relative to the choice $ k = p $. Note also that the choice of $ k $ can be done individually for each mode in the sense that the value of $ m^*_k $ in $ (k, k_0) $-TJADE is invariant to the choice of $ k_0 $, and similarly for higher order data.

\begin{figure}[t!]
	\begin{center}
		\includegraphics[width=1\textwidth]{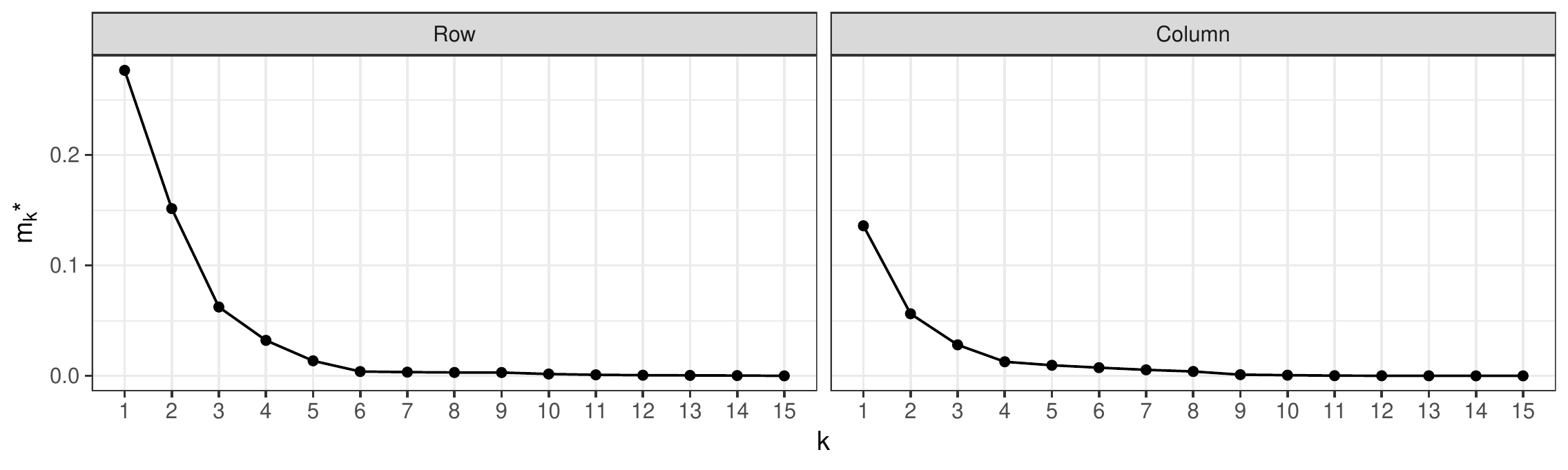}
	\end{center}
	\caption{The plots of the sequential MD index means $ m^*_k $ versus the tuning parameter value $ k $. The left plot is for the rows and the right plot for the columns of the digit data.}
	\label{fig:digit_mdi}
\end{figure}

To illustrate the proposed tool, we consider the hand-written digit data set from the R-package \textit{ElemStatLearn} \citep{RElemStatLearn}. The data set consists of 7291 images of hand-written numerals 0--9 digitized to $ 16 \times 16 $ matrices, with each element in $ [-1, 1] $, describing the grayscale intensity of the corresponding pixel. The underlying objective of the data set is to build a classifier that can correctly identify the digits that each of the matrices represent. For simplicity, we work on a subset of 400 randomly chosen images of the digits 1 and 7. These two digits were chosen as their visual similarity makes the classification task more difficult. In this kind of context, ICA is commonly used as a preprocessing step to reduce the data into a low-dimensional subspace, which hopefully contains all the classification information, simplifying the task for the subsequent classifier. In this spirit, we apply the proposed estimation strategy for $ \nu $ to the sample and obtain the two plots shown in Figure \ref{fig:digit_mdi}. The plots should be interpreted similarly as the scree plot in PCA, where the aim is to find an ``elbow'' where the slope changes quickly. The two curves seem to imply that the correct parameter for $ k $-TJADE is roughly $ (4, 3) $ . In Figure \ref{fig:digit_plot} we plot the components with the indices $ (16, 16)  $ and $ (16, 15) $ of the $ (4, 3) $-TJADE solution. These components were chosen as $ k $-TJADE orders the rows/columns in descending order according to their mean kurtoses, and low kurtosis is often a sign of bimodality, making it natural to search for components with classifying ability in the lower-right corner of $ \textbf{Z} $. Indeed, the chosen components reveal a clear separation of the digits. All ones are concentrated roughly in one spherical cluster, with a single outlying seven inside. The bulk of the sevens lies around the cluster of ones in a curved manner and the location of an individual seven is based on the angle of its stem. The analysis could further be continued from here by fitting a classifier, for example, a support vector machine, to the obtained components.

\begin{figure}[t!]
	\begin{center}
		\includegraphics[width=1\textwidth]{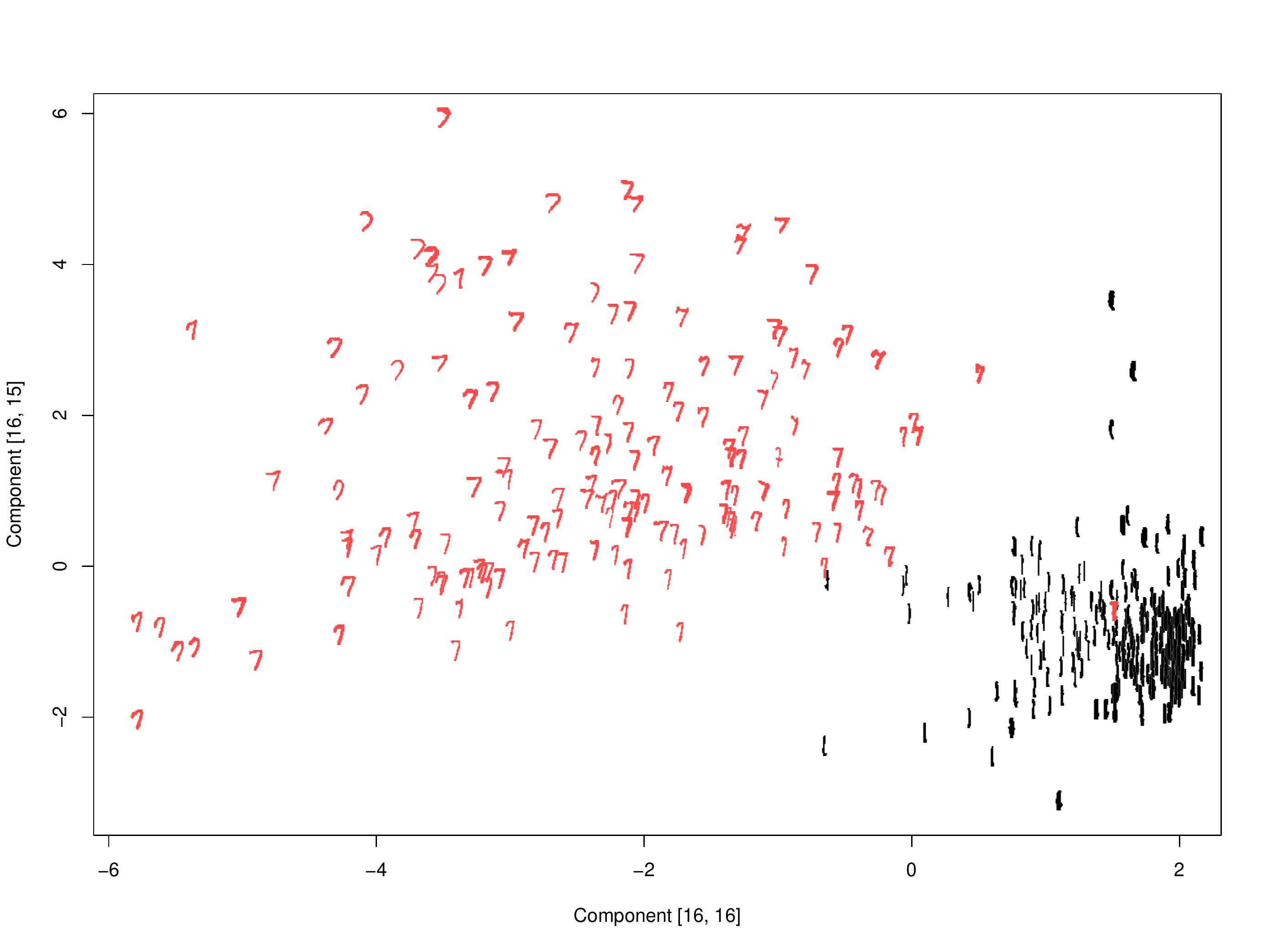}
	\end{center}
	\caption{The scatter plot of two particular low-kurtosis independent components found by $ (4, 3) $-TJADE from the digit data.}
	\label{fig:digit_plot}
\end{figure}

We conclude with two remarks. First, experimenting reveals (not shown here) that also, e.g., $ (2, 2) $-TJADE finds similar separation of the two groups as is shown in Figure \ref{fig:digit_plot}. There are two explanations for this: On one hand, locating the elbows in the scree plots in Figure \ref{fig:digit_mdi} is visually difficult for a sample size this low, and thus it could be that $ (2, 2) $ is the actual optimal choice. On the other hand, whatever the true value of $ \nu $ is, using $ k $ larger than that is sure to guarantee that \textit{all} independent components are estimated (asymptotically) correctly. However, it could be that the group separation structure is such a strong feature of the data that it can be found also with a sub-optimal choice of $ k $, and that using the optimal value of $ k $ just guarantees that also the $ 254 $ other (possibly less interesting) components are recovered successfully.

Second, the suggested procedure is somewhat excessive, as it requires the computation of the $ k $-TJADE solution for a wide range of values of $ k $. In addition, after having computed the solutions, one option would be to simply continue the analysis using the solution with the largest value of $ k $ and ignore the estimation altogether. However, when dealing with very low sample sizes, it could happen that using a too large $ k $, although asymptotically sufficient for the estimation of all components, induces noise in the estimation. Still, the suggested procedure could prove useful in studies where a pilot/training data set is used to determine the optimal value of $ k $, which is then used for later data sets, saving computation time for all other data sets at the expense of the first. Finally, one option to save computation time would be to compute the values of $ m_k^* $ sequentially, increasing $ k $ one-by-one, and stopping when a significant drop in the plot is visible.  
}

\section{Discussion}\label{sec:summary}

\red{We proposed a sped-up version of TJADE, the method with the lowest limiting variance among the currently studied methods of tensorial independent component analysis. Under easily interpretable additional assumptions, the extension, $ k $-TJADE, achieves the same limiting variance to TJADE, while simultaneously exhibiting significantly lower computational cost.} A large part of this efficiency is preserved also for samples of finite size.

An interesting future research question is to derive the theoretic behavior of $k$-TJADE when Assumption \ref{assu:FOBI}$ (v) $ is violated. Based on our simulations, even when the value of $ k $ is chosen to be too small or too large, $k$-TJADE and $k$-JADE can still work. This can be seen as a safety net for the users of $k$-TJADE. The simulations suggest that the performance deteriorates the further down one goes from the optimal $k$.

\section*{Acknowledgements}
The work of Joni Virta was supported by the Academy of Finland (grant 321883). The work of Niko Lietz\'{e}n was supported by the Emil Aaltonen Foundation (grant 170156 N). The work of Klaus Nordhausen was supported by Austrian Science Fund (FWF) Grant number P31881‐N32 and CRoNoS COST Action IC1408.  All authors acknowledge the computational resources provided by the Aalto Science-IT project.

\red{The authors wish to express their gratitude to the two anonymous referees, whose insightful comments greatly helped improving the quality of the manuscript.}

\appendix

\section{Proofs}\label{sec:proofs}


We denote the sequence of i.i.d. observations as $\textbf{X}_N  = \{ \textbf{X}_1, \ldots, \textbf{X}_n \}$, such that e.g. $\hat{\boldsymbol{\Sigma}}_1[ \textbf{X}_N]$ denotes  the left sample covariance matrix  estimated from the sample $\textbf{X}_N$.

Before the proofs of the main results, we  establish three auxiliary lemmas.

\begin{lemma}
	\label{lemma:diag}
	The minimization problem   ${\arg\!\min}_{\textbf{V}  \in  \mathcal{U}} \left\{\tilde{g}\left(\textbf{V},\textbf{X}\right)\right\}$ is equivalent to the maximization problem  ${\arg\!\max}_{\textbf{V}  \in  \mathcal{U}} \left\{{g}\left(\textbf{V},\textbf{X}\right)\right\}$, where $\mathcal{U} = \{ \textbf{V} \in \mathcal{R}^{p\times p}   : \textbf{VV}\tprime = \textbf{I}_p  \} $,
	\begin{align*}
	\tilde{g}\left( \textbf{V},\textbf{X} \right) = \sum_{i,j=1}^p \left\| \mathrm{off} \left( \textbf{V}\tprime \textbf{C}^{ij}[\textbf{X}] \textbf{V} \right)  \right\|_\textnormal{F}^2 \quad \text{and}
	\quad 
	g\left( \textbf{V},\textbf{X}  \right) = \sum_{i,j=1}^p \left\| \mathrm{diag} \left( \textbf{V}\tprime \textbf{C}^{ij}[\textbf{X}] \textbf{V} \right)  \right\|_\textnormal{F}^2.
	\end{align*}
\end{lemma}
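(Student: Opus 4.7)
The plan is to exploit the fact that the Frobenius norm is invariant under orthogonal conjugation together with the orthogonal decomposition of any square matrix into its diagonal and off-diagonal parts. Concretely, since $\mathrm{diag}(M)$ and $\mathrm{off}(M)$ have disjoint support (they occupy complementary entries of the $p \times p$ grid), for any matrix $M \in \mathcal{R}^{p \times p}$ we have the Pythagorean identity
\[
\|M\|_\textnormal{F}^2 = \|\mathrm{diag}(M)\|_\textnormal{F}^2 + \|\mathrm{off}(M)\|_\textnormal{F}^2.
\]
First I would apply this identity to $M = \textbf{V}\tprime \textbf{C}^{ij}[\textbf{X}] \textbf{V}$ for each pair of indices $i, j \in \{1, \ldots, p\}$ and sum over $i, j$, obtaining
\[
\sum_{i,j=1}^{p} \bigl\|\textbf{V}\tprime \textbf{C}^{ij}[\textbf{X}] \textbf{V}\bigr\|_\textnormal{F}^2 \;=\; g(\textbf{V}, \textbf{X}) + \tilde{g}(\textbf{V}, \textbf{X}).
\]

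Next I would invoke the orthogonal invariance of the Frobenius norm: for any $\textbf{V} \in \mathcal{U}$ and any $\textbf{C} \in \mathcal{R}^{p \times p}$, $\|\textbf{V}\tprime \textbf{C} \textbf{V}\|_\textnormal{F}^2 = \mathrm{tr}(\textbf{V}\tprime \textbf{C}\tprime \textbf{V} \textbf{V}\tprime \textbf{C} \textbf{V}) = \mathrm{tr}(\textbf{C}\tprime \textbf{C}) = \|\textbf{C}\|_\textnormal{F}^2$, which uses only $\textbf{V}\textbf{V}\tprime = \textbf{I}_p$ and the cyclic property of the trace. Hence the left-hand side of the previous display reduces to $\sum_{i,j=1}^{p} \|\textbf{C}^{ij}[\textbf{X}]\|_\textnormal{F}^2$, a quantity that does not depend on $\textbf{V}$. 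Denoting this constant by $K$, we have the pointwise identity
\[
g(\textbf{V}, \textbf{X}) + \tilde{g}(\textbf{V}, \textbf{X}) \;=\; K \qquad \text{for every } \textbf{V} \in \mathcal{U}.
\]

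The conclusion is then immediate: minimizing $\tilde{g}(\cdot, \textbf{X})$ over $\mathcal{U}$ is equivalent to maximizing $K - \tilde{g}(\cdot, \textbf{X}) = g(\cdot, \textbf{X})$ over $\mathcal{U}$, so the two argmin/argmax sets coincide. There is essentially no technical obstacle here; the only point worth flagging is to be explicit that the orthogonality constraint $\textbf{V} \in \mathcal{U}$ is used both in the Frobenius invariance and in ensuring that the domain of the two optimization problems is literally the same set.
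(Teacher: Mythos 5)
Your proposal is correct and follows essentially the same route as the paper: the paper's proof likewise combines the orthogonal invariance $\|\textbf{C}^{ij}[\textbf{X}]\|_\textnormal{F}^2 = \|\textbf{V}\tprime \textbf{C}^{ij}[\textbf{X}]\textbf{V}\|_\textnormal{F}^2$ with the Pythagorean split into diagonal and off-diagonal parts, concluding that the two objectives sum to a constant independent of $\textbf{V}$. No gaps.
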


\begin{proof}[Proof of Lemma \ref{lemma:diag}]
	Note that for all $\textbf{V} \in \mathcal{U}$,
	\begin{align*}
	\left\|  \textbf{C}^{ij}[\textbf{X}]   \right\|_\textnormal{F}^2 = \left\| \textbf{V}\tprime \textbf{C}^{ij}[\textbf{X}]  \textbf{V} \right\|_\textnormal{F}^2 =  \left\|  \mathrm{diag} \left( \textbf{V}\tprime\textbf{C}^{ij}[\textbf{X}] \textbf{V} \right)\right\|_\textnormal{F}^2  +  \left\| \mathrm{off} \left( \textbf{V}\tprime \textbf{C}^{ij}[\textbf{X}]  \textbf{V}\right)  \right\|_\textnormal{F}^2.
	\end{align*}
	Thus, $\underset{\textbf{V}\in \mathcal{U}}{\arg\!\min} \left\{ \tilde{g}\left(\textbf{V},\textbf{X}\right) \right\}$  is equivalent to 
	\begin{align*}
	&\underset{\textbf{V}\in \mathcal{U}}{\arg\!\max}\left\{\sum_{i,j = 1}^p  \left( \left\|  \mathrm{diag} \left( \textbf{V}\tprime \textbf{C}^{ij}[\textbf{X}] \textbf{V} \right) \right\|_\textnormal{F}^2  - \left\| \textbf{C}^{ij}[\textbf{X}]   \right\|_\textnormal{F}^2\right)\right\} = \underset{\textbf{V}\in \mathcal{U}}{\arg\!\max}\left\{ {g}\left( \textbf{V},\textbf{X}\right) \right\}.
	\end{align*}
\end{proof}

\begin{lemma}\label{lem:H_simplification}
	For all $ (i, j) \in \{1, \ldots ,p\} \times \{1, \ldots ,p\}   $ and any orthogonal matrix $\textbf{H} = (h_{ij}) \in \mathcal{R}^{p \times p}$, the sample and the population cumulant matrices satisfy,
	\begin{align*}
	\textbf{H}\tprime \textbf{C}{}^{ij}[\textbf{H} \textbf{X}] \textbf{H} &= \sum_{a = 1}^p \sum_{b = 1}^p h_{ia} h_{jb} \textbf{C}{}^{ab}[ \textbf{X} ] \numberthis \label{eq:auxiliary}  \\
	\textbf{H}\tprime \hat{\textbf{C}}{}^{ij}[\textbf{H} \textbf{X}_N] \textbf{H} &= \sum_{a = 1}^p \sum_{b = 1}^p h_{ia} h_{jb} \hat{\textbf{C}}{}^{ab}[ \textbf{X}_N ],
	\end{align*}
	where $ \textbf{H} \textbf{X}_N  $ denotes the sample $ \{\textbf{H} \textbf{X}_1, \ldots , \textbf{H} \textbf{X}_n \}$.
\end{lemma}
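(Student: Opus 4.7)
The approach is a direct computation that tracks how each ingredient of $\textbf{C}^{ij}[\textbf{X}]$ transforms when $\textbf{X}$ is replaced by $\textbf{H}\textbf{X}$, and then shows that conjugation by $\textbf{H}\tprime(\cdot)\textbf{H}$ exactly reproduces the bilinear combination $\sum_{a,b} h_{ia} h_{jb}\,\textbf{C}^{ab}[\textbf{X}]$. The proof will split the cumulant matrix
\[
\textbf{C}^{ij}[\textbf{X}] \;=\; \frac{1}{q}\mathbb{E}\!\left[\textbf{e}_i\tprime \textbf{X}\textbf{X}\tprime\textbf{e}_j\,\textbf{X}\textbf{X}\tprime\right]\;-\;\boldsymbol{\Sigma}_1[\textbf{X}]\bigl(\delta_{ij} q \textbf{I}_p + \textbf{E}^{ij} + \textbf{E}^{ji}\bigr)\boldsymbol{\Sigma}_1[\textbf{X}]\tprime
\]
into two pieces and handle them in turn. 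The population and sample statements are structurally identical since only linearity of the (empirical or true) expectation is used, so it suffices to carry out the calculation once.

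First I would record the two elementary facts
\[
\boldsymbol{\Sigma}_1[\textbf{H}\textbf{X}] \;=\; \textbf{H}\,\boldsymbol{\Sigma}_1[\textbf{X}]\,\textbf{H}\tprime,\qquad \textbf{e}_i\tprime \textbf{H} = \sum_{a=1}^p h_{ia}\,\textbf{e}_a\tprime,\qquad \textbf{H}\tprime \textbf{e}_j = \sum_{b=1}^p h_{jb}\,\textbf{e}_b.
\]
Expanding the first (quartic) term of $\textbf{C}^{ij}[\textbf{H}\textbf{X}]$ gives
\[
\frac{1}{q}\mathbb{E}\!\left[\sum_{a,b} h_{ia}h_{jb}\,\textbf{e}_a\tprime \textbf{X}\textbf{X}\tprime \textbf{e}_b\,\textbf{H}\textbf{X}\textbf{X}\tprime\textbf{H}\tprime\right],
\]
and conjugating by $\textbf{H}\tprime(\cdot)\textbf{H}$ cancels the outer $\textbf{H}$'s, producing exactly $\sum_{a,b} h_{ia}h_{jb}\cdot\tfrac{1}{q}\mathbb{E}[\textbf{e}_a\tprime \textbf{X}\textbf{X}\tprime \textbf{e}_b\,\textbf{X}\textbf{X}\tprime]$.

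The second (correction) term requires the same treatment but also uses orthogonality of $\textbf{H}$ in one extra place. Applying $\textbf{H}\tprime(\cdot)\textbf{H}$ to $\textbf{H}\boldsymbol{\Sigma}_1[\textbf{X}]\textbf{H}\tprime(\delta_{ij} q \textbf{I}_p + \textbf{E}^{ij} + \textbf{E}^{ji})\textbf{H}\boldsymbol{\Sigma}_1[\textbf{X}]\tprime\textbf{H}\tprime$ peels off the outermost $\textbf{H}$'s, leaving $\boldsymbol{\Sigma}_1[\textbf{X}]\textbf{H}\tprime(\cdots)\textbf{H}\,\boldsymbol{\Sigma}_1[\textbf{X}]\tprime$. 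Then I would expand
\[
\textbf{H}\tprime \textbf{E}^{ij}\textbf{H} = \sum_{a,b} h_{ia}h_{jb}\,\textbf{E}^{ab},\qquad \textbf{H}\tprime \textbf{E}^{ji}\textbf{H} = \sum_{a,b} h_{ia}h_{jb}\,\textbf{E}^{ba},
\]
and rewrite the identity piece using the orthogonality relation $\sum_{a} h_{ia}h_{ja} = \delta_{ij}$ as
\[
\delta_{ij}\,q\,\textbf{I}_p \;=\; \sum_{a,b} h_{ia}h_{jb}\,(q\,\delta_{ab}\,\textbf{I}_p).
\]
This is the only slightly non-obvious bookkeeping step, and it is exactly what lets the correction term regroup into the desired bilinear form.

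Combining the two pieces yields
\[
\textbf{H}\tprime \textbf{C}^{ij}[\textbf{H}\textbf{X}]\,\textbf{H} \;=\; \sum_{a,b} h_{ia} h_{jb}\,\textbf{C}^{ab}[\textbf{X}],
\]
which is the first identity. For the sample version I would simply repeat the same manipulations after replacing the expectations by sample averages over $\textbf{X}_1, \ldots, \textbf{X}_n$ and the covariance functional by its sample analogue $\hat{\boldsymbol{\Sigma}}_1[\textbf{X}_N]$; every identity used above is purely algebraic and survives this substitution verbatim. The main (modest) obstacle is keeping the four $\textbf{H}$-factors in the correction term organised, and remembering to rewrite the $\delta_{ij}$ via orthogonality so that it assumes the same bilinear shape as the $\textbf{E}^{ij} + \textbf{E}^{ji}$ contributions.
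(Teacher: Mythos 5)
Your proposal is correct and follows essentially the same route as the paper's proof: both exploit $\boldsymbol{\Sigma}_1[\textbf{H}\textbf{X}] = \textbf{H}\boldsymbol{\Sigma}_1[\textbf{X}]\textbf{H}\tprime$, expand the scalar $\textbf{e}_i\tprime\textbf{H}\textbf{X}\textbf{X}\tprime\textbf{H}\tprime\textbf{e}_j$ bilinearly in the $h_{ia}h_{jb}$, and match the correction terms via the identities for $\textbf{H}\tprime\textbf{E}^{ij}\textbf{H}$, $\textbf{H}\tprime\textbf{E}^{ji}\textbf{H}$ and the orthogonality relation $\sum_a h_{ia}h_{ja}=\delta_{ij}$ (which the paper carries out as an ``element-wise examination'' after substituting the representation of the quartic moment). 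The only cosmetic difference is that the paper adds and subtracts the correction inside the quartic term while you regroup the correction term directly; the underlying cancellations are identical, and your observation that the sample version follows verbatim by linearity matches the paper as well.
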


\begin{proof}[Proof of Lemma \ref{lem:H_simplification}]
	Both results follow by the same arguments and we prove only the former. Since $ \textbf{e}_i\tprime \textbf{X} \textbf{X}\tprime \textbf{e}_j $ is a scalar and $\boldsymbol{\Sigma}_1 [\textbf{H} \textbf{X}] = \textbf{H} \boldsymbol{\Sigma}_1 [\textbf{X}] \textbf{H}\tprime $, the left-hand side of Eq. \eqref{eq:auxiliary} can be written as
	\begin{align}\label{eq:aux_1}
	\frac{1}{q} \mathbb{E} [ \textbf{e}_i\tprime \textbf{H} \textbf{X} \textbf{X}\tprime \textbf{H}\tprime \textbf{e}_j \textbf{X} \textbf{X}\tprime ] - \boldsymbol{\Sigma}_1 [\textbf{X}] \textbf{H}\tprime (\delta_{ij} q \textbf{I}_p + \textbf{E}^{ij} + \textbf{E}^{ji} ) \textbf{H} (\boldsymbol{\Sigma}_1 [\textbf{X}])\tprime,
	\end{align}
	where the first term can be written as
	\begin{align}\label{eq:aux_2}
	\sum_{a,b} \frac{1}{q} \mathbb{E} [ h_{ia} (\textbf{X} \textbf{X}\tprime)_{ab} h_{jb} \textbf{X} \textbf{X}\tprime ] = \sum_{a,b} h_{ia} h_{jb} \frac{1}{q} \mathbb{E} [ \textbf{e}_a\tprime \textbf{X} \textbf{X}\tprime \textbf{e}_b \textbf{X} \textbf{X}\tprime ], 
	\end{align}
	where the $ 1/q $-scaled expected value is the first term in the definition of the matrix $\textbf{C}^{ab}[\textbf{X}]$ and has the representation
	\begin{align}\label{eq:aux_3}
	\frac{1}{q} \mathbb{E} [ \textbf{e}_a\tprime \textbf{X} \textbf{X}\tprime \textbf{e}_b \textbf{X} \textbf{X}\tprime ] = \textbf{C}^{ab}[\textbf{X}] + \boldsymbol{\Sigma}_1 [\textbf{X}] (\delta_{ab} q \textbf{I}_p + \textbf{E}^{ab} + \textbf{E}^{ba} ) (\boldsymbol{\Sigma}_1 [\textbf{X}])\tprime.
	\end{align}
	Plugging Eq. \eqref{eq:aux_3} into Eq. \eqref{eq:aux_2} we obtain
	\begin{align*}
	&\sum_{a,b} \frac{1}{q} \mathbb{E} [ h_{ia} (\textbf{X} \textbf{X}\tprime)_{ab} h_{jb} \textbf{X} \textbf{X}\tprime ] \\
	=& \sum_{a,b} h_{ia} h_{jb} \left( \textbf{C}^{ab}[\textbf{X}] + \boldsymbol{\Sigma}_1 [\textbf{X}] (\delta_{ab} q \textbf{I}_p + \textbf{E}^{ab} + \textbf{E}^{ba} ) (\boldsymbol{\Sigma}_1 [\textbf{X}])\tprime \right). \numberthis \label{eq:aux_4}
	\end{align*}
	Furthermore, plugging Eq. \eqref{eq:aux_4} into Eq. \eqref{eq:aux_1} gives us,
	\begin{align}\label{eq:aux_5}
	\begin{split}
	\textbf{H}\tprime \textbf{C}{}^{ij}[\textbf{H} \textbf{X}] \textbf{H} &= \sum_{a,b} h_{ia} h_{jb} \left( \textbf{C}^{ab}[\textbf{X}] + \boldsymbol{\Sigma}_1 [\textbf{X}] (\delta_{ab} q \textbf{I}_p + \textbf{E}^{ab} + \textbf{E}^{ba} ) (\boldsymbol{\Sigma}_1 [\textbf{X}])\tprime \right) \\
	&- \boldsymbol{\Sigma}_1 [\textbf{X}] \textbf{H}\tprime (\delta_{ij} q \textbf{I}_p + \textbf{E}^{ij} + \textbf{E}^{ji} ) \textbf{H} (\boldsymbol{\Sigma}_1 [\textbf{X}])\tprime.
	\end{split}
	\end{align}
	
	Element-wise examination reveals that,
	\begin{align*}
	&\sum_{a,b} h_{ia} h_{jb} \boldsymbol{\Sigma}_1 [\textbf{X}] ( \delta_{ab} q \textbf{I}_p )  (\boldsymbol{\Sigma}_1 [\textbf{X}])\tprime - \boldsymbol{\Sigma}_1 [\textbf{X}] \textbf{H}\tprime (\delta_{ij} q \textbf{I}_p  ) \textbf{H} (\boldsymbol{\Sigma}_1 [\textbf{X}])\tprime = \textbf{0}, \\
	&\sum_{a,b} h_{ia} h_{jb} \boldsymbol{\Sigma}_1 [\textbf{X}]  \textbf{E}^{ab}   (\boldsymbol{\Sigma}_1 [\textbf{X}])\tprime - \boldsymbol{\Sigma}_1 [\textbf{X}] \textbf{H}\tprime \textbf{E}^{ij}  \textbf{H} (\boldsymbol{\Sigma}_1 [\textbf{X}])\tprime = \textbf{0}, \quad \textnormal{and} \\
	&\sum_{a,b} h_{ia} h_{jb} \boldsymbol{\Sigma}_1 [\textbf{X}]  \textbf{E}^{ba}   (\boldsymbol{\Sigma}_1 [\textbf{X}])\tprime - \boldsymbol{\Sigma}_1 [\textbf{X}] \textbf{H}\tprime \textbf{E}^{ji}  \textbf{H} (\boldsymbol{\Sigma}_1 [\textbf{X}])\tprime = \textbf{0}.
	\end{align*}
	This concludes the proof.
\end{proof}

\begin{lemma}\label{lem:U_convergence}
	Let $ \hat{\textbf{S}} \in \mathcal{R}^{p \times p}$ be a sequence of estimators (indexed by $ n $) such that $ \sqrt{n} ( \hat{\textbf{S}} - \boldsymbol{\Lambda}) \rightsquigarrow \mathcal{D}$, where $  \rightsquigarrow $ denotes convergence in distribution, $ \mathcal{D} $ is some matrix-valued distribution and
	\[ 
	\boldsymbol{\Lambda} = \begin{pmatrix}
	\lambda_1 \textbf{I}_{k_1} & \textbf{0} & \cdots & \textbf{0} \\
	\textbf{0} & \lambda_2 \textbf{I}_{k_2} & \cdots & \textbf{0} \\
	\vdots & \vdots & \ddots & \vdots \\
	\textbf{0} & \textbf{0} & \cdots & \lambda_R \textbf{I}_{k_R}
	\end{pmatrix},
	\]
	where  $ k_1 + \cdots + k_R = p $ and the values $ \lambda_k $, $ k \in \{ 1, \ldots , R \} $, are distinct and in a strictly decreasing order ($ \lambda_1 > \lambda_2 > \cdots > \lambda_R$). Let $ \hat{\textbf{U}} $ be the sequence of eigenvector matrices, where the columns are the eigenvectors of the matrices $\hat{\textbf{S}}$. Partition $\hat{\textbf{U}}$ into blocks $ \hat{\textbf{U}}_{ij} \in \mathcal{R}^{k_i \times k_j}$ as
	\begin{align*} 
	\hat{\textbf{U}} = \begin{pmatrix}
	\hat{\textbf{U}}_{11} & \hat{\textbf{U}}_{12} & \cdots & \hat{\textbf{U}}_{1R} \\
	\hat{\textbf{U}}_{21} & \hat{\textbf{U}}_{22} & \cdots & \hat{\textbf{U}}_{2R} \\
	\vdots & \vdots & \ddots & \vdots \\
	\hat{\textbf{U}}_{R1} & \hat{\textbf{U}}_{R2} & \cdots & \hat{\textbf{U}}_{RR}
	\end{pmatrix},
	\end{align*}
	in a similar way to $ \boldsymbol{\Lambda} $. Then
	\[ 
	\hat{\textbf{U}}_{ij} = \mathcal{O}_p \left( \frac{1}{\sqrt{n}} \right), \quad \mbox{whenever} \quad  i \neq j.
	\] 
\end{lemma}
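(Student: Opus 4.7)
The plan is to apply the standard Davis--Kahan $ \sin \Theta $ perturbation theorem to each eigenspace of $ \boldsymbol{\Lambda} $ separately. The assumed $ \sqrt{n} $-convergence gives $ \hat{\textbf{S}} - \boldsymbol{\Lambda} = \mathcal{O}_p(n^{-1/2}) $ in Frobenius norm. Because the scalars $ \lambda_1, \ldots, \lambda_R $ are strictly decreasing and hence distinct, each $ \lambda_k $ has a strictly positive spectral gap $ \delta_k > 0 $ to the rest, and for $ n $ large enough the spectrum of $ \hat{\textbf{S}} $ splits into $ R $ well-separated clusters. The $ k $th cluster contains exactly $ k_k $ eigenvalues of $ \hat{\textbf{S}} $, all at distance $ \mathcal{O}_p(n^{-1/2}) $ from $ \lambda_k $, and by the ordering convention implicit in the lemma these are precisely the eigenvalues whose eigenvectors constitute the $ k $th column-block of $ \hat{\textbf{U}} $.

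Next, I would let $ P_k $ denote the orthogonal projection onto the $ k $th eigenspace of $ \boldsymbol{\Lambda} $; by the block-diagonal form of $ \boldsymbol{\Lambda} $ this is simply the coordinate projection with $ \textbf{I}_{k_k} $ as its $ (k,k) $-block and zeros elsewhere. Writing $ \hat{\textbf{W}}_k = (\hat{\textbf{U}}_{1k}\tprime, \ldots, \hat{\textbf{U}}_{Rk}\tprime)\tprime \in \mathcal{R}^{p \times k_k} $ for the $ k $th column-block of $ \hat{\textbf{U}} $, the projection onto the matching cluster eigenspace of $ \hat{\textbf{S}} $ is $ \hat{P}_k = \hat{\textbf{W}}_k \hat{\textbf{W}}_k\tprime $. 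Davis--Kahan then delivers $ \| \hat{P}_k - P_k \|_\textnormal{F} = \mathcal{O}_p(n^{-1/2}) $.

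Finally, I would read off the claim by comparing the block entries of $ \hat{P}_k - P_k $. For $ i \neq k $ the $ (i,k) $-block of $ \hat{P}_k $ is $ \hat{\textbf{U}}_{ik} \hat{\textbf{U}}_{kk}\tprime $ while the corresponding block of $ P_k $ vanishes, so $ \hat{\textbf{U}}_{ik} \hat{\textbf{U}}_{kk}\tprime = \mathcal{O}_p(n^{-1/2}) $. The $ (k,k) $-block yields $ \hat{\textbf{U}}_{kk} \hat{\textbf{U}}_{kk}\tprime = \textbf{I}_{k_k} + \mathcal{O}_p(n^{-1/2}) $, so $ \hat{\textbf{U}}_{kk} $ is invertible with probability tending to one and its inverse is $ \mathcal{O}_p(1) $. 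Multiplying by $ (\hat{\textbf{U}}_{kk}\tprime)^{-1} $ then gives $ \hat{\textbf{U}}_{ik} = \mathcal{O}_p(n^{-1/2}) $ for all $ i \neq k $, which is the desired bound once $ k $ ranges over $ 1, \ldots, R $.

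The main obstacle I anticipate is that the eigenvectors of $ \hat{\textbf{S}} $ within each cluster are not individually well-defined asymptotically --- any orthonormal basis of the corresponding eigenspace of $ \boldsymbol{\Lambda} $ is an equally valid limit, and so $ \hat{\textbf{U}} $ itself need not converge to a unique limit. This is exactly why the argument must be phrased at the level of projections onto whole eigenspaces rather than individual eigenvectors, and it is what restricts the conclusion to off-diagonal blocks of $ \hat{\textbf{U}} $. A minor bookkeeping point is to verify that the indexing convention of the lemma makes the $ k $th column-block of $ \hat{\textbf{U}} $ track the cluster converging to $ \lambda_k $; this follows from the strict decrease of the $ \lambda_k $ together with the continuity of the sorted spectrum.
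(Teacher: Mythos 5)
Your proof is correct, but it takes a genuinely different route from the paper's. The paper works directly with the eigenequation $\hat{\textbf{S}} \hat{\textbf{U}} = \hat{\textbf{U}} \hat{\boldsymbol{\Lambda}}$: it invokes a Wielandt-type eigenvalue perturbation result (Eaton and Tyler) to get $\sqrt{n}(\hat{\boldsymbol{\Lambda}} - \boldsymbol{\Lambda}) = \mathcal{O}_p(1)$, substitutes into the eigenequation to obtain the commutator bound $\boldsymbol{\Lambda}\hat{\textbf{U}} - \hat{\textbf{U}}\boldsymbol{\Lambda} = \mathcal{O}_p(1/\sqrt{n})$, and then reads off $(\lambda_i - \lambda_j)\hat{\textbf{U}}_{ij} = \mathcal{O}_p(1/\sqrt{n})$ block-wise, dividing by the nonzero gap. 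Your Davis--Kahan argument instead controls the spectral projections $\hat{P}_k = \hat{\textbf{W}}_k\hat{\textbf{W}}_k\tprime$ and then has to undo the product $\hat{\textbf{U}}_{ik}\hat{\textbf{U}}_{kk}\tprime$ via the asymptotic invertibility of the diagonal block $\hat{\textbf{U}}_{kk}$ --- an extra step the commutator argument avoids entirely, since it isolates each off-diagonal block directly. In exchange, your approach makes the underlying geometry more transparent: it is phrased at the level of eigenspaces, which is exactly the right resolution given that individual eigenvectors within a multiple eigenvalue cluster do not converge (a point the paper also exploits later, noting that $\hat{\textbf{H}}$ has no general limiting properties), and it explains why only the off-diagonal blocks can be controlled. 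Both proofs implicitly use symmetry of $\hat{\textbf{S}}$ (so that $\hat{\textbf{U}}$ is orthogonal), which holds in the application. Your closing remarks on the non-uniqueness of the within-cluster basis and on matching the $k$th column-block to the cluster near $\lambda_k$ correctly identify and dispose of the only delicate bookkeeping issues.
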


\begin{proof}[Proof of Lemma \ref{lem:U_convergence}]
	
	The sequence of eigenvectors satisfies the eigenequation
	\begin{align}\label{eq:eigen_equation}
	\hat{\textbf{S}} \hat{\textbf{U}} = \hat{\textbf{U}} \hat{\boldsymbol{\Lambda}},
	\end{align}
	where $ \hat{\boldsymbol{\Lambda}} $ is a sequence of diagonal matrices containing the estimated eigenvalues. Then,
	\begin{align}\label{eq:eigen_equation2}
	\sqrt{n} \hat{\textbf{S}} \hat{\textbf{U}} = \sqrt{n} ( \hat{\textbf{S}} - \boldsymbol{\Lambda}) \hat{\textbf{U}} + \sqrt{n} \boldsymbol{\Lambda} \hat{\textbf{U}} = \sqrt{n} \boldsymbol{\Lambda} \hat{\textbf{U}} + \mathcal{O}_p(1),
	\end{align}
	where the second equality holds since $ \hat{\textbf{U}} = \mathcal{O}_p(1)$ as a result of the compactness of the space of orthogonal matrices and since\red{, by Prohorov's theorem, our assumption $ \sqrt{n} ( \hat{\textbf{S}} - \boldsymbol{\Lambda}) \rightsquigarrow \mathcal{D}$ implies that $ \sqrt{n} ( \hat{\textbf{S}} - \boldsymbol{\Lambda}) = \mathcal{O}_p(1) $.} 
	
	It follows from \cite[Theorem 3.2]{eaton1991wielandt} that $ \sqrt{n} (\hat{\boldsymbol{\Lambda}} - \boldsymbol{\Lambda}) = \mathcal{O}_p(1) $, and consequently we obtain
	\begin{align}\label{eq:eigen_equation3}
	\sqrt{n} \hat{\textbf{U}} \hat{\boldsymbol{\Lambda}} = \hat{\textbf{U}} \sqrt{n} (\hat{\boldsymbol{\Lambda}} - \boldsymbol{\Lambda}) + \sqrt{n} \hat{\textbf{U}} \boldsymbol{\Lambda} = \sqrt{n} \hat{\textbf{U}} \boldsymbol{\Lambda} + \mathcal{O}_p(1),
	\end{align}
	By combining Eqs. \eqref{eq:eigen_equation2} and \eqref{eq:eigen_equation3} with Eq. \eqref{eq:eigen_equation}, we arrive at
	\[ 
	\boldsymbol{\Lambda} \hat{\textbf{U}} - \hat{\textbf{U}} \boldsymbol{\Lambda} = \mathcal{O}_p \left( \frac{1}{\sqrt{n}} \right).
	\]
	If this is written block-wise, we obtain
	\[
	(\lambda_i - \lambda_j) \hat{\textbf{U}}_{ij} = \mathcal{O}_p \left( \frac{1}{\sqrt{n}} \right), \quad \mbox{for all} \quad   (i, j) \in \{ 1, \ldots , R \} \times \{ 1, \ldots , R \}  .
	\]
	The result now follows by considering only the off-diagonal blocks, $ i \neq j $, and dividing by the non-zero $ (\lambda_i - \lambda_j) $.
\end{proof}

\begin{remark}
	\red{In the sequel, the assumption of Lemma \ref{lem:U_convergence} that $ \sqrt{n} ( \hat{\textbf{S}} - \boldsymbol{\Lambda}) \rightsquigarrow \mathcal{D}$ is fulfilled by applying central limit theorem.}
\end{remark}

\begin{proof}[Proof of Theorem \ref{theo:functional}]
	Let Assumptions \ref{assu:JADE} and \ref{assu:FOBI}$ (v) $ hold for some fixed $ v $. Consider now the $ k $-TJADE functional $ \hat{\boldsymbol{\Gamma}}{}^k $ where $ k \geq v$.
	
	By \cite[Theorem 1]{virta2017independent} the standardized matrix satisfies $\textbf{X}^{\textnormal{st}} = \tau \textbf{U}_1 \textbf{Z} \textbf{U}_2\tprime$ for some $\tau > 0$ and for some orthogonal matrices, $\textbf{U}_1 \in \mathcal{R}^{p \times p}, \textbf{U}_2 \in \mathcal{R}^{q \times q}$. By \cite[Eq. (5)]{virta2017independent} the TFOBI-matrix functional $\textbf{B}[\textbf{X}^\textnormal{st}]$ has the form, 
	\[
	\textbf{B}[\textbf{X}^\textnormal{st}]  = \textbf{U}_1 \left( \sum_{k=1}^p \tau^4  \left( \kappa_k + p + q +1 \right) \textbf{E}^{kk} \right) \textbf{U}_1\tprime,
	\]
	where, $
	\kappa_k = \frac{1}{q} \sum_{j=1}^q \mathbb{E}[z_{kj}^4] -3,$
	are the row means of the kurtosis values of $ \textbf{Z} = (z_{ij}) $ such that $\kappa_1 \geq \cdots \geq \kappa_p$. Let $ R $ be the number of the distinct eigenvalues of $\textbf{B}[\textbf{X}^\textnormal{st}]$. Denote these distinct values by  $\lambda_1 > \cdots > \lambda_R$ and denote the corresponding multiplicities of these values by $k_1, \ldots , k_R$, respectively. Note that each $ \lambda_r $ can be given as $ \tau^4  \left( \kappa_k + p + q +1 \right) $ for some $ k $. As $ k \geq v$, where $ v $ is the largest multiplicity among the row mean kurtoses $ \kappa_1, \ldots ,\kappa_p $, we have that $ k_r \leq k $, for all $ r $. 
	
	The set of eigenvectors of $\textbf{B}[\textbf{X}^\textnormal{st}]$ is identifiable up to orthogonal transformations within each eigenspace. That is, ignoring the order and signs, the eigenvector matrix $\textbf{W}_1$ of $\textbf{B}[\textbf{X}^\textnormal{st}]$ has the form
	\begin{align}\label{eq:H_form}
	\textbf{W}_1 = \textbf{U}_1 \begin{pmatrix}
	\textbf{H}_{11}\tprime & \textbf{0} & \cdots & \textbf{0} \\
	\textbf{0} & \textbf{H}_{12}\tprime & \cdots & \textbf{0} \\
	\vdots & \vdots & \ddots & \vdots \\ 
	\textbf{0} & \textbf{0} & \cdots & \textbf{H}_{1R}\tprime
	\end{pmatrix} = \textbf{U}_1 \textbf{H}_1\tprime,
	\end{align}
	where each $\textbf{H}_{1r} \in \mathcal{R}^{k_r \times k_r}$, $r \in \{ 1, \ldots , R \}$, is orthogonal. The FOBI-rotated data is then
	\[
	\textbf{X}^{\textnormal{F}} = \boldsymbol{\Gamma}^\textnormal{F}\left[\textbf{X}\right] \textbf{X} (\boldsymbol{\Gamma}^\textnormal{F}[\textbf{X}\tprime])\tprime = \textbf{W}_1\tprime \textbf{X}^\textnormal{st} \textbf{W}_2 = \tau \textbf{H}_1 \textbf{Z} \textbf{H}_2\tprime.
	\]
	
	The proof of Theorem \ref{theo:functional} is divided into two parts. First, we prove that the condition \textit{(i)} in Definition \ref{def:ic_functional} holds and after that we prove that the condition \textit{(ii)} holds.
	
	\medskip
	
	\noindent\textbf{Condition} \textit{(i)}:
	
	\medskip
	
	The condition \textit{(i)} claims that $k$-TJADE can, under Assumptions \ref{assu:JADE} and \ref{assu:FOBI}$ (v) $, estimate the block diagonal orthogonal matrix $\textbf{H}_1$ up to the signs and the order of its columns. For convenience, we drop all subscripts referring to the side (left or right) of the model, e.g. in the following $\textbf{H}_{1r}$ is $\textbf{H}_r$ and $\textbf{H}_1$ is  $\textbf{H}$.
	
	Adapting the proof of \cite[Theorem 1]{virta2017jade}, we have that
	\begin{equation}\label{eq:cij_elements}
	\textbf{C}^{ij} [ \textbf{X}^{\textnormal{F}} ] = \textbf{H} \left( \sum_{k=1}^p \tau^4 h_{ik} h_{jk} \kappa_k \textbf{E}^{kk} \right) \textbf{H}\tprime = \textbf{H} \textbf{D}^{ij} \textbf{H}\tprime,
	\end{equation}
	where $\textbf{D}^{ij} =  \sum_{k=1}^p \tau^4 h_{ik} h_{jk} \kappa_k \textbf{E}^{kk} $ is diagonal matrix for every $i,j\in \{1,\ldots,p\}$. Since $\tilde{g}( \textbf{H}, \textbf{X}^\textnormal{F} ) = 0$, the matrix  $\textbf{H}$ is a solution to the k-TJADE  minimization problem
	\begin{align*}
	\underset{\{\textbf{V}:\textbf{V}\textbf{V}\tprime = \textbf{I}_p\}}{\mbox{min}}\left\{\tilde{g}( \textbf{V},\textbf{X}^\textnormal{F} ) \right\} = \underset{\{\textbf{V}:\textbf{V}\textbf{V}\tprime = \textbf{I}_p\}}{\mbox{min}}\left\{ \sum_{\left|i-j\right|<k}\left\| \mathrm{off} \left( \textbf{V}\tprime \textbf{C}^{ij}[\textbf{X}^\textnormal{F}] \textbf{V} \right)  \right\|_\textnormal{F}^2\right\}.
	\end{align*}
	
	Next, we show that $\textbf{H}$ is a unique minimizer, up to the signs and the order of its columns. Let $K_r = \sum_{m=1}^r k_r$ such that $K_0 = 0$ and let $\mathcal{I}_r = \{ K_{r-1} + 1, K_{r-1} + 2, \ldots , K_r \}$ be the subset of the index set $\{ 1, \ldots ,p \}$ for which the corresponding columns of $\textbf{H}$ belong to the $r$th eigenvalue block. 
	By \cite[Lemma 2]{bonhomme2009consistent}, the minimizer is unique up to the signs and order of its columns if, for each pair of distinct columns $\textbf{h}_s, \textbf{h}_t$ of $ \textbf{H} $, there exists a pair $ (i, j) \in \{ (i, j) : | i - j | < k \} $ such that the eigenvalues of $\textbf{C}^{ij}[\textbf{X}^\textnormal{F}]$ corresponding to $\textbf{h}_s$ and $\textbf{h}_t$ are distinct. By the decomposition in Eq. \eqref{eq:cij_elements}, this is equivalent to requiring that
	\begin{align}\label{eq:exists_pair}
	\exists (i, j): | i - j | < k, \quad \mbox{such that} \quad \tau^4 h_{is} h_{js} \kappa_s \neq \tau^4 h_{it} h_{jt} \kappa_t.
	\end{align} 
	We next show that Eq. \eqref{eq:exists_pair} holds for all $ s \neq t $ by considering separately the two cases where $ s $ and $ t $ either belong to two different sets or where $ s $ and $ t $ belong to the same set of the partition $\mathcal{I}_1, \ldots , \mathcal{I}_R$.
	
	First, assume that $ s $ and $ t $ belong to different sets of the partition. We proceed with proof by contraposition and assume that Eq. \eqref{eq:exists_pair} does not hold. That is, for all $ (i, j)$, $| i - j | < k $, the eigenvalue pairs are always equal. In particular, $ \tau^4 h_{is}^2 \kappa_s = \tau^4 h_{it}^2 \kappa_t $ for all $ i \in \{ 1, \ldots , p \} $. By summing over the $ p $ equations, we have, from the orthogonality of $ \textbf{H} $, that $ \tau^4 \kappa_s = \tau^4 \kappa_t$, where $\tau^4 >0$. This implies  that $ \kappa_s = \kappa_t $, which is a contradiction since we assumed that $ s $ and $ t $ belong to different sets of the partition and thus have different eigenvalues. Consequently, Eq. \eqref{eq:exists_pair} holds for any pair of columns belonging to distinct sets of the index partition. 
	
	Assume then that $ s $ and $ t $ belong to the same set $ \mathcal{I}_q $ of the partition. We again proceed with proof by contraposition and assume that Eq. \eqref{eq:exists_pair} does not hold. Now, $ \kappa_s = \kappa_t $ and we have that $\tau^4 h_{is} h_{js} \kappa_s = \tau^4 h_{it} h_{jt} \kappa_s$ for all $ (i, j)$, $| i - j | < k $. In particular, this holds for all $ (i, j) $ in the subset
	\begin{align}\label{eq:set_intersection} 
	\left\{(i, j) :| i - j | < k \right\} \cap \left\{ (i, j) : i \in  \mathcal{I}_q \land j \in  \mathcal{I}_q  \right\}.
	\end{align}
	By Assumption \ref{assu:FOBI}$ (v) $ $k_q \leq k$ and consequently the distance $ | i - j | $  is always less than $ k  $ in the set $  \left\{ (i, j) : i \in  \mathcal{I}_q \land j \in  \mathcal{I}_q  \right\} $. Hereby, the intersection in Eq. \eqref{eq:set_intersection} is equal to $  \left\{ (i, j) : i \in  \mathcal{I}_q \land j \in  \mathcal{I}_q  \right\} $. We now multiply each equality $\tau^4 h_{is} h_{js} \kappa_s = \tau^4 h_{it} h_{jt} \kappa_s$, with indices $ (i, j) $ in the set $ \left\{ (i, j) : i \in  \mathcal{I}_q \land j \in  \mathcal{I}_q  \right\} $, by $ h_{is} h_{it} $. By summing twice over $ \mathcal{I}_q $, we obtain
	\begin{align}\label{eq:eigen_equality} 
	\tau^4 \kappa_s \left( \sum_{i \in \mathcal{I}_q} h_{is}^2 \right) \left( \sum_{j \in \mathcal{I}_q} h_{js}^2 \right) = \tau^4 \kappa_s \left( \sum_{i \in \mathcal{I}_q} h_{is} h_{it} \right) \left( \sum_{j \in \mathcal{I}_q} h_{js} h_{jt} \right).
	\end{align}
	The constant $ \tau^4 > 0$. The constant $ \kappa_s \neq 0$, as the contrary would imply that two of the kurtosis values were equal to zero, $ \kappa_s = \kappa_t = 0 $, which would then contradict Assumption~\ref{assu:JADE}. Thus we can divide both sides of Eq. \eqref{eq:eigen_equality} by $ \tau^4 \kappa_s $ and we obtain
	\begin{align}\label{eq:eigen_equality_2} 
	\left( \sum_{i \in \mathcal{I}_q} h_{is}^2 \right) \left( \sum_{j \in \mathcal{I}_q} h_{js}^2 \right) = \left( \sum_{i \in \mathcal{I}_q} h_{is} h_{it} \right) \left( \sum_{j \in \mathcal{I}_q} h_{js} h_{jt} \right).
	\end{align}
	As $ \textbf{H} $ has the block diagonal structure given in Eq. \eqref{eq:H_form}, the sums in Eq. \eqref{eq:eigen_equality_2} are dot products between the columns of the $ q $th orthogonal block of $ \textbf{H} $ and the left-hand side of Eq. \eqref{eq:eigen_equality_2} is equal to one and the right-hand side of Eq. \eqref{eq:eigen_equality_2} is equal to zero. This is obviously a contradiction. Consequently, Eq. \eqref{eq:exists_pair} holds also for any pair of columns which belong to the same set of the partition This concludes the proof of condition~\textit{(i)}.
	
	\medskip
	
	\noindent\textbf{Condition} \textit{(ii)}:
	
	\medskip
	
	To see that condition \textit{(ii)} holds, recall from \cite{virta2017independent} that the TFOBI functional $\boldsymbol{\Gamma}^\textnormal{F}$ is orthogonally equivariant and that the TFOBI-transformation is orthogonally invariant.  Thus, for all $\textbf{X} \in \mathcal{R}^{p \times q}$ and all orthogonal $\textbf{U}_1 \in \mathcal{R}^{p \times p}$, $\textbf{U}_2 \in \mathcal{R}^{q \times q}$, we have that
	\[
	\boldsymbol{\Gamma}^k [\textbf{U}_1 \textbf{X} \textbf{U}_2\tprime ] = \textbf{V}[(\textbf{U}_1 \textbf{X} \textbf{U}_2)^\textnormal{F}] \boldsymbol{\Gamma}^\textnormal{F}[\textbf{U}_1 \textbf{X} \textbf{U}_2\tprime] \equiv \textbf{V}[\textbf{X}^\textnormal{F}] \boldsymbol{\Gamma}^\textnormal{F}[\textbf{X}] \textbf{U}_1\tprime = \boldsymbol{\Gamma}^k\left[\textbf{X}\right] \textbf{U}_1\tprime.
	\]
	This concludes the proof of condition \textit{(ii)}.
\end{proof}


\begin{proof}[Proof of Theorem \ref{theo:consistency}]

	Recall that $\textbf{X}_N = \{ \textbf{X}_1\ldots \textbf{X}_n \}$ is an i.i.d. sequence from the tensor IC model and let $\bar{\textbf{X}}$  be the sample mean of $\textbf{X}_N$. Then, the TFOBI-transformed observations are 
	\[
	\textbf{X}_i^\textnormal{F} = \hat{\textbf{H}} ( \hat{\boldsymbol{\Sigma}}_1[\textbf{X}_N])^{-\frac{1}{2}} \left( \textbf{X}_i - \bar{\textbf{X}} \right)  ( \hat{\boldsymbol{\Sigma}}_2[\textbf{X}_N])^{-\frac{1}{2}} \hat{\textbf{R}}{}\tprime = \hat{\textbf{H}} \textbf{Y}_i \hat{\textbf{R}}{}\tprime, \quad i \in \{1,\ldots,n\},
	\]
	where the orthogonal matrices $\hat{\textbf{H}}, \hat{\textbf{R}}$ are the left and right TFOBI-rotations and where $(\hat{\boldsymbol{\Sigma}}_{j}[\textbf{X}_N])^{-\frac{1}{2}}$, $j \in \{1,2\}$,  are the symmetric square roots of the left and right sample covariance matrices. Note that the cumulant matrices $ \hat{\textbf{C}}{}^{ij} $ depend on the observations $\textbf{X}_i^\textnormal{F}$ only through the product
	\[
	\textbf{X}_i^\textnormal{F} (\textbf{X}_i^\textnormal{F})\tprime = \hat{\textbf{H}} \textbf{Y}_i \hat{\textbf{R}}{}\tprime \hat{\textbf{R}} \textbf{Y}_i\tprime \hat{\textbf{H}}{}\tprime = \hat{\textbf{H}} \textbf{Y}_i \textbf{Y}_i\tprime \hat{\textbf{H}}{}\tprime,
	\]
	allowing us to omit the right rotation $\hat{\textbf{R}}$  in the following. 
	
	What makes proving the limiting results challenging, is that the sequence of matrices $\hat{\textbf{H}}$ has no general limiting properties. If there are any kurtosis values having multiplicity larger than one, this implies that the corresponding eigenvectors are not uniquely defined and, consequently, the TFOBI-solution for them does not converge. However, we can still say two things: First, by the compactness of the set of orthogonal matrices, $\hat{\textbf{H}} = \mathcal{O}_p(1)$. Second, by Lemma \ref{lem:U_convergence} and the central limit theorem, the elements of any column of $\hat{\textbf{H}}$ with indices not belonging to the corresponding diagonal multiplicity block converge to $0$ with the rate of root-$n$. That is, $\sqrt{n} \hat{h}_{kl} = \mathcal{O}_p(1)$, for $k, l$ satisfying the aforementioned conditions. These two, in conjunction with Assumption \ref{assu:FOBI}, are sufficient for proving the limiting results.
	
	As our first task, we show that the sample and the population objective functions of $ k $-TJADE are asymptotically equivalent to those of TJADE. The left $k$-TJADE estimator is $\hat{\textbf{V}}{}\tprime \hat{\textbf{H}}( \hat{\boldsymbol{\Sigma}}_1\left[\textbf{X}_N\right])^{-\frac{1}{2}}$, where the orthogonal $\hat{\textbf{V}} = (\hat{\textbf{v}}_1, \ldots , \hat{\textbf{v}}_p)$ is, by Lemma \ref{lemma:diag}, the sequence of the maximizers of the sequence of objective functions,
	\[
	\hat{g}(\textbf{V}, \hat{\textbf{H}} \textbf{Y}_N)=\sum_{| i - j | < k}^p \left\| \mathrm{diag} \left( \textbf{V}\tprime \hat{\textbf{C}}^{ij}[\hat{\textbf{H}} \textbf{Y}_N] \textbf{V} \right)  \right\|_\textnormal{F}^2   =  \sum_{| i - j | < k}^p \sum_{l=1}^p \left( \textbf{v}_l\tprime \hat{\textbf{C}}^{ij} [ \hat{\textbf{H}} \textbf{Y}_N ] \textbf{v}_l \right)^2.
	\]
	Consistency of the estimator $\hat{\textbf{V}}{}\tprime \hat{\textbf{H}}( \hat{\boldsymbol{\Sigma}}_1\left[\textbf{X}_N\right])^{-\frac{1}{2}}$ is now equal to the claim that there exists a sequence of estimators $\hat{\textbf{V}}{}\tprime \hat{\textbf{H}}( \hat{\boldsymbol{\Sigma}}_1\left[\textbf{X}_N\right])^{-\frac{1}{2}}$ that converges in probability to $\textbf{I}_p$. It is sufficient to show that $\hat{\textbf{V}}{}\tprime \hat{\textbf{H}} \rightarrow_{\mathbb{P}}  \textbf{I}_p$, since the weak law of large numbers and the continuous mapping theorem directly imply that $(\hat{\boldsymbol{\Sigma}}_1\left[\textbf{X}_N\right])^{-\frac{1}{2}} \rightarrow_{\mathbb{P}}  \textbf{I}_p$. 
	
	
	Since $ \hat{\textbf{V}} $ maximizes the objective function $ \hat{g} $ and since $\hat{\textbf{H}}$ is orthogonal, we have that
	\begin{align*}
	\hat{g}(\hat{\textbf{V}}, \hat{\textbf{H}} \textbf{Y}_N) &\geq \hat{g}(\textbf{V}, \hat{\textbf{H}} \textbf{Y}_N), \quad \forall \ \textbf{V} \mbox{ orthogonal}, \quad \mbox{and} \\
	\hat{g}(\hat{\textbf{H}} \hat{\textbf{H}}{}\tprime \hat{\textbf{V}}, \hat{\textbf{H}} \textbf{Y}_N) &\geq \hat{g}(\hat{\textbf{H}} \hat{\textbf{H}}{}\tprime \textbf{V}, \hat{\textbf{H}} \textbf{Y}_N), \quad \forall \ \textbf{V} \mbox{ orthogonal}.
	\end{align*}
	The range of $\textbf{V} \mapsto \hat{\textbf{H}}{}\tprime \textbf{V}$ is the set of all $p \times p$ orthogonal matrices and consequently $\hat{\textbf{H}}{}\tprime \hat{\textbf{V}} = \hat{\textbf{W}}$ is the sequence of maximizers for the argument-modified sequence of objective functions, $\hat{g}(\hat{\textbf{H}} {\textbf{W}}, \hat{\textbf{H}} \textbf{Y}_N)$. The original sequence of maximizers can be written as $\hat{\textbf{V}} =  \hat{\textbf{H}} \hat{\textbf{W}}$ and the claim takes the  form $\hat{\textbf{W}} \rightarrow_{\mathbb{P}}  \textbf{I}_p$. Applying Lemma \ref{lem:H_simplification}, this new sequence of objective functions $\hat{g}(\hat{\textbf{H}} {\textbf{W}}, \hat{\textbf{H}} \textbf{Y}_N)$ can be reformulated as
	\begin{align*}
	\hat{g}(\hat{\textbf{H}} \textbf{W}, \hat{\textbf{H}} \textbf{Y}_N) &= \sum_{| i - j | < k}^p \sum_{l=1}^p \left( \textbf{w}_l\tprime \hat{\textbf{H}}{}\tprime \hat{\textbf{C}}^{ij} [ \hat{\textbf{H}} \textbf{Y}_N ] \hat{\textbf{H}} \textbf{w}_l \right)^2 \\
	&= \sum_{| i - j | < k}^p \sum_{l=1}^p \sum_{a,b} \sum_{a', b'}  \hat{h}_{ia} \hat{h}_{jb} \hat{h}_{ia'} \hat{h}_{jb'} \textbf{w}_l\tprime \hat{\textbf{C}}^{ab}[  \textbf{Y}_N ] \textbf{w}_l \textbf{w}_l\tprime \hat{\textbf{C}}^{a'b'}[  \textbf{Y}_N ] \textbf{w}_l.
	\end{align*}
	Next, we add and subtract,
	\begin{align}\label{eq:leftover}
	\hat{m} = \sum_{| i - j | \geq k}^p \sum_{l=1}^p \sum_{a,b} \sum_{a', b'}  \hat{h}_{ia} \hat{h}_{jb} \hat{h}_{ia'} \hat{h}_{jb'} \textbf{w}_l\tprime \hat{\textbf{C}}^{ab}[ \textbf{Y}_N ] \textbf{w}_l \textbf{w}_l\tprime \hat{\textbf{C}}^{a'b'}[ \textbf{Y}_N ] \textbf{w}_l
	\end{align}
	in order to make the first sum run over the whole range of $i$ and $j$. This gives us
	\begin{align*}
	&\hat{g}(\hat{\textbf{H}} \textbf{W}, \hat{\textbf{H}} \textbf{Y}_N)
	= \sum_{i = 1}^p \sum_{j = 1}^p \sum_{l=1}^p \sum_{a,b} \sum_{a', b'}  \hat{h}_{ia} \hat{h}_{jb} \hat{h}_{ia'} \hat{h}_{jb'} \textbf{w}_l\tprime \hat{\textbf{C}}^{ab}[  \textbf{Y}_N ] \textbf{w}_l \textbf{w}_l\tprime \hat{\textbf{C}}^{a'b'}[  \textbf{Y}_N ] \textbf{w}_l - \hat{m} \\
	&= \sum_{l=1}^p \sum_{a,b} \sum_{a', b'} \sum_{i = 1}^p \left( \hat{h}_{ia} \hat{h}_{ia'} \right) \sum_{j = 1}^p \left( \hat{h}_{jb}  \hat{h}_{jb'} \right) \textbf{w}_l\tprime \hat{\textbf{C}}^{ab}[  \textbf{Y}_N ] \textbf{w}_l \textbf{w}_l\tprime \hat{\textbf{C}}^{a'b'}[  \textbf{Y}_N ] \textbf{w}_l - \hat{m} \\
	&= \sum_{l=1}^p \sum_{a,b} \sum_{a', b'} \delta_{aa'} \delta_{bb'} \textbf{w}_l\tprime \hat{\textbf{C}}^{ab}[  \textbf{Y}_N ] \textbf{w}_l \textbf{w}_l\tprime \hat{\textbf{C}}^{a'b'}[  \textbf{Y}_N ] \textbf{w}_l - \hat{m} \\
	&= \sum_{l=1}^p \sum_{a,b} \textbf{w}_l\tprime \hat{\textbf{C}}^{ab}[  \textbf{Y}_N ] \textbf{w}_l \textbf{w}_l\tprime \hat{\textbf{C}}^{ab}[  \textbf{Y}_N ] \textbf{w}_l - \hat{m} \\
	&= \sum_{a, b} \sum_{l=1}^p \left( \textbf{w}_l\tprime \hat{\textbf{C}}^{ab} [ \textbf{Y}_N ]  \textbf{w}_l \right)^2 - \hat{m},
	\end{align*}
	where the third equality follows from the orthogonality of $ \hat{\textbf{H}} $.
	
	Next, we establish two properties of $ \hat{m} $: $(1^\circ)$ If the argument $\textbf{W}$ is replaced with any sequence of orthogonal matrices bounded in probability (the boundedness is an instant consequence of the orthogonality), $n \cdot  \hat{m} $ is bounded in probability.  $(2^\circ)$ $ \hat{m} $ converges uniformly in probability to zero.
	
	We first prove the property $(1^\circ)$ (which will be used in the proof of Theorem \ref{theo:limiting} later). Let
	\[ 
	\tilde{m} = \sum_{| i - j | \geq k}^p \sum_{l=1}^p \sum_{a,b} \sum_{a', b'}  \hat{h}_{ia} \hat{h}_{jb} \hat{h}_{ia'} \hat{h}_{jb'} \tilde{\textbf{w}}_l\tprime \hat{\textbf{C}}^{ab}[ \textbf{Y}_N ] \tilde{\textbf{w}}_l \tilde{\textbf{w}}_l\tprime \hat{\textbf{C}}^{a'b'}[ \textbf{Y}_N ] \tilde{\textbf{w}}_l,
	\]
	where $ \tilde{\textbf{W}} = (\tilde{\textbf{w}}_1, \ldots , \tilde{\textbf{w}}_p) $ is some sequence of orthogonal matrices bounded in probability and all other terms are as in the definition of $ \hat{m} $ in Eq. \eqref{eq:leftover}. We divide the terms of $\tilde{m}$ into different cases based on the indices.
	
	\begin{enumerate}
		\item $ a \neq b $, $ a' \neq b' $: By the supplementary material of \cite{virta2017jade} and by the central limit theorem that $\hat{\textbf{C}}{}^{ab}[ \textbf{Y}_N ] = \mathcal{O}_p(1/\sqrt{n})$ and that $\hat{\textbf{C}}{}^{a'b'}[ \textbf{Y}_N ] = \mathcal{O}_p(1/\sqrt{n})$. As the elements $\hat{h}_{ia}, \hat{h}_{jb}, \hat{h}_{ia'}, \hat{h}_{jb'}, \tilde{\textbf{w}}_l$ are bounded in probability for all indices $ i,j,a,b,a',b',l$, any summand in $ \tilde{m} $ with the indices $ a \neq b $ and $ a' \neq b' $ is $\mathcal{O}_p(1/n)$.
		\item $ a \neq b $, $ a' = b' $: By the same arguments as in case 1, we have that $\hat{\textbf{C}}{}^{ab}[ \textbf{Y}_N ] = \mathcal{O}_p(1/\sqrt{n})$. Furthermore, the supplementary material of \cite{virta2017jade} provides us the result $\hat{\textbf{C}}{}^{a'b'}[ \textbf{Y}_N ] \rightarrow_{\mathbb{P}} \textbf{C}{}^{a' b'} $ for some constant matrix $ \textbf{C}{}^{a' b'} $. Both $\hat{h}_{ia'}$ and $\hat{h}_{ja'}$ (with $ | i - j | \geq k $) cannot belong to a diagonal block of $\hat{\textbf{H}}$ as that would compromise Assumption \ref{assu:FOBI}$ (v) $. Thus, by Lemma \ref{lem:U_convergence}, at least one of $\hat{h}_{ia'}$ and $\hat{h}_{ja'}$ must be $\mathcal{O}_p(1/\sqrt{n})$. Consequently, any summand in $ \tilde{m} $ with the indices $ a \neq b $ and $ a' = b' $ is $\mathcal{O}_p(1/n)$.
		\item $ a = b $, $ a' \neq b' $: Since $ \tilde{m} $ is symmetric with respect to $ (a, b) $ and $ (a', b') $, it follows from the same arguments as in case 2 that any summand in $ \tilde{m} $ with the indices $ a = b $ and $ a' \neq b' $ is $\mathcal{O}_p(1/n)$.
		\item $ a = b $, $ a' = b' $: By the same arguments as above, $\hat{\textbf{C}}{}^{ab}[ \textbf{Y}_N ]$ and $\hat{\textbf{C}}{}^{a'b'}[ \textbf{Y}_N ]$ both converge in probability to some constant matrices and the pairs $(\hat{h}_{ia'}, \hat{h}_{ja'})$ and $(\hat{h}_{ia}, \hat{h}_{ja})$ each provide one term that is $ \mathcal{O}_p(1/\sqrt{n}) $. Thus any summand in $ \tilde{m} $ with the indices $ a = b $ and $ a' = b' $ is $\mathcal{O}_p(1/n)$.
	\end{enumerate}
	
	Since the sum defining $ \tilde{m} $ is a finite sum of $ \mathcal{O}_p(1/n) $-terms, it holds that $ \tilde{m} = \mathcal{O}_p(1/n) $. Note that by choosing $ \tilde{\textbf{W}} = \textbf{W} $, we have $ \tilde{m} = \hat{m} $ and the objective function satisfies
	\[
	\hat{g}(\hat{\textbf{H}} \textbf{W}, \hat{\textbf{H}} \textbf{Y}_N) = \sum_{a,b}^p \sum_{l=1}^p \left( \textbf{w}_l\tprime \hat{\textbf{C}}^{ab} [ \textbf{Y}_N ]  \textbf{w}_l \right)^2 + \mathcal{O}_p(1/n) = \hat{f}(\textbf{W}, \textbf{Y}_N) + \mathcal{O}_p(1/n),
	\]
	where $ \hat{f}(\textbf{W}, \textbf{Y}) = \sum_{a,b}^p \sum_{l=1}^p ( \textbf{w}_l\tprime \hat{\textbf{C}}^{ab} [ \textbf{Y} ]  \textbf{w}_l )^2 $.
	
	We next prove property $(2^\circ)$. By the triangle inequality and the monotonicity of supremum,
	\[ 
	\sup_{\textbf{W}\in \mathcal{U}}  \left\{ | \hat{m} | \right\} \leq \sum_{| i - j | \geq k}^p \sum_{l=1}^p \sum_{a,b} \sum_{a', b'} \sup_{\textbf{W}\in \mathcal{U}}  \left\{ \left| \hat{h}_{ia} \hat{h}_{jb} \hat{h}_{ia'} \hat{h}_{jb'} \textbf{w}_l\tprime \hat{\textbf{C}}^{ab}[ \textbf{Y}_N ] \textbf{w}_l \textbf{w}_l\tprime \hat{\textbf{C}}^{a'b'}[ \textbf{Y}_N ] \textbf{w}_l \right| \right\},
	\]
	where $\mathcal{U} = \{ \textbf{V} \in \mathcal{R}^{p\times p}   : \textbf{VV}\tprime = \textbf{I}_p  \} $. It is thus sufficient to show that 
	\[
	\sup_{\textbf{W}\in \mathcal{U}}  \left\{ \left| \hat{h}_{ia} \hat{h}_{jb} \hat{h}_{ia'} \hat{h}_{jb'} \textbf{w}_l\tprime \hat{\textbf{C}}^{ab}[ \textbf{Y}_N ] \textbf{w}_l \textbf{w}_l\tprime \hat{\textbf{C}}^{a'b'}[ \textbf{Y}_N ] \textbf{w}_l \right| \right\} = o_p(1)
	\]
	individually for all terms in the sum $\hat{m}$. By the Cauchy-Schwarz inequality, the operator norm inequality, the equivalence of all finite-dimensional norms and the monotonicity of the supremum, we obtain the following upper bound (where the dependency of the expression on the variable $\textbf{W}$ is removed by the unit lengths of its columns),
	\begin{align*}
	&\sup_{\textbf{W}\in \mathcal{U}}  \left\{\left| \hat{h}_{ia} \hat{h}_{jb} \hat{h}_{ia'} \hat{h}_{jb'} \textbf{w}_l\tprime \hat{\textbf{C}}^{ab}[ \textbf{Y}_N ] \textbf{w}_l \textbf{w}_l\tprime \hat{\textbf{C}}^{a'b'}[ \textbf{Y}_N ] \textbf{w}_l \right|  \right\} \\
	\leq &\sup_{\textbf{W}\in \mathcal{U}}  \left\{ |\hat{h}_{ia} \hat{h}_{jb} \hat{h}_{ia'} \hat{h}_{jb'}| \| \textbf{w}_l \|_\textnormal{F} \| \hat{\textbf{C}}^{ab}[ \textbf{Y}_N ] \textbf{w}_l \|_\textnormal{F} \| \textbf{w}_l \|_\textnormal{F} \| \hat{\textbf{C}}^{a'b'}[ \textbf{Y}_N ] \textbf{w}_l \|_\textnormal{F}  \right\} \\
	\leq &\sup_{\textbf{W}\in \mathcal{U}}  \left\{ |\hat{h}_{ia} \hat{h}_{jb} \hat{h}_{ia'} \hat{h}_{jb'}| \| \hat{\textbf{C}}^{ab}[ \textbf{Y}_N ] \|_\textnormal{2} \| \textbf{w}_l \|_\textnormal{F}   \| \hat{\textbf{C}}^{a'b'}[ \textbf{Y}_N ] \|_\textnormal{2} \| \textbf{w}_l \|_\textnormal{F}  \right\} \\
	\leq & c_0 |\hat{h}_{ia} \hat{h}_{jb} \hat{h}_{ia'} \hat{h}_{jb'}| \| \hat{\textbf{C}}^{ab}[ \textbf{Y}_N ] \|_\textnormal{F} \| \hat{\textbf{C}}^{a'b'}[ \textbf{Y}_N ] \|_\textnormal{F}, \label{eq:sup_bound} \numberthis
	\end{align*}
	where $ c_0 $ is a constant resulting from switching from the operator norm to the Frobenius norm. Using the same arguments we used to prove the property $(1^\circ)$ above, Eq. \eqref{eq:sup_bound} can be seen to always converge in probability to zero, regardless of the values of the indices. Thus $\hat{g}(\hat{\textbf{H}} \textbf{W}, \hat{\textbf{H}} \textbf{Y}_N) = \hat{f}(\textbf{W}, \textbf{Y}_N) - \hat{m}$ for which $\sup_\textbf{W} \{ | \hat{m} | \} = o_p(1)$.
	
	Now, Lemma \ref{lem:H_simplification} along with exactly analogous calculations as above (with exact zeros taking the roles of $o_p(1)$-terms) can be used to show that under Assumption \ref{assu:FOBI}$ (v) $, the population version of the argument-modified $k$-TJADE objective function,
	\[
	g(\textbf{H} \textbf{W}, \textbf{H} \textbf{Y}) =  \sum_{| i - j | < k}^p \sum_{l=1}^p \left( \textbf{w}_l\tprime \textbf{H}\tprime \textbf{C}^{ij} [ \textbf{H} \textbf{Y} ] \textbf{H} \textbf{w}_l \right)^2,
	\]
	where $\textbf{H}$ is the population TFOBI-rotation, equals the augmented objective function not depending on $\textbf{H}$, that is,
	\[
	f(\textbf{W}, \textbf{Y}) := \sum_{i, j}^p \sum_{l=1}^p \left( \textbf{w}_l\tprime \textbf{C}^{ij} [ \textbf{Y} ] \textbf{w}_l \right)^2 = g(\textbf{H} \textbf{W}, \textbf{H} \textbf{Y}).
	\]
	Consequently,
	\begin{align}\label{eq:jade_equivalence}
	\hat{g}(\hat{\textbf{H}} \textbf{W}, \hat{\textbf{H}} \textbf{Y}_N) - g(\textbf{H} \textbf{W}, \textbf{H} \textbf{Y}) = \hat{f}(\textbf{W}, \textbf{Y}_N) - f(\textbf{W}, \textbf{Y}) - \hat{m}, 
	\end{align}
	where $\hat{f}(\textbf{W}, \textbf{Y}_N)$ and $f(\textbf{W}, \textbf{Y})$ are the sample and population objective functions of TJADE \citep{virta2017jade}.
	
	Having established this, we next show that the difference in the population and the sample $ k $-TJADE objective functions converges uniformly in probability to zero, allowing us to use the M-estimator convergence argument, see e.g. \cite[Theorem 5.7]{van1998asymptotic}, to prove the consistency of the estimator. By Eq. \eqref{eq:jade_equivalence},
	we have
	\begin{align}\label{eq:g_to_f_convergence}
	\begin{split}
	&\sup_{\textbf{W}\in \mathcal{U}}\left\{\left| \hat{g}(\hat{\textbf{H}} \textbf{W}, \hat{\textbf{H}} \textbf{Y}_N) - g(\textbf{H} \textbf{W}, \textbf{H} \textbf{Y}) \right|\right\}\\
	=& 
	\sup_{\textbf{W}\in \mathcal{U}}\left\{ \left| \hat{f}(\textbf{W}, \textbf{Y}_N) - f(\textbf{W}, \textbf{Y}) - \hat{m} \right| \right\} \\
	\leq& 
	\sup_{\textbf{W}\in \mathcal{U}} \left\{  \left| \hat{f}(\textbf{W}, \textbf{Y}_N) - f(\textbf{W}, \textbf{Y}) \right| \right\}  + o_p(1),
	\end{split}
	\end{align}
	and the uniform convergence of the $ k $-TJADE objective functions is a direct consequence of the uniform convergence of the TJADE objective functions. The latter result is given implicitly in \cite{virta2017jade}, but for completeness, we present the proof here.
	
	Let,
	\begin{align*}
	\textbf{A}^{ij} = \textnormal{diag}\left(\textbf{W}\tprime \textbf{C}^{ij}[\textbf{Y}] \textbf{W}\right) 
	\quad \textnormal{ and } \quad
	\hat{\textbf{A}}^{ij} = \textnormal{diag}\left(\textbf{W}\tprime \hat{\textbf{C}}^{ij}[\textbf{Y}_N] \textbf{W}\right).
	\end{align*}
	Since the Frobenius norm of the diagonal elements of a matrix is always bounded by the Frobenius norm of the entire matrix and since $ \textbf{W} $ is orthogonal, the following inequalities hold,
	\begin{align*}
	&\|   \textbf{A}^{ij}\|_\textnormal{F}  \leq \|   \textbf{C}^{ij}[\textbf{Y}]\|_\textnormal{F} \textnormal{, } \quad  \|   \hat{\textbf{A}}^{ij}\|_\textnormal{F}  \leq \|   \hat{\textbf{C}}^{ij}[\textbf{Y}_N] \|_\textnormal{F} \quad \textnormal{ and }\\
	& \left\|   \textbf{A}^{ij}- \hat{\textbf{A}}^{ij}\right\|_\textnormal{F} = 
	\left\| \textnormal{diag}\left( \textbf{W}\tprime( \textbf{C}^{ij}[\textbf{Y}] - \hat{\textbf{C}}^{ij}[\textbf{Y}_N])\textbf{W}  \right) \right\|_\textnormal{F}
	\leq   \left\|   \textbf{C}^{ij}[\textbf{Y}] - \hat{\textbf{C}}^{ij}[\textbf{Y}_N] \right\|_\textnormal{F}.
	\end{align*}
	By the reverse triangle inequality, we have that for any $ \textbf{A}, \textbf{B} \in \mathcal{R}^{p \times p} $,
	\begin{align*}
	&\left| \|   \textbf{A} \|_\textnormal{F}^2    - \| \textbf{B} \|_\textnormal{F}^2   \right| = \left| ( \|   \textbf{A} \|_\textnormal{F}    - \| \textbf{B} \|_\textnormal{F}) ( \|\textbf{A} \|_\textnormal{F}    + \| \textbf{B} \|_\textnormal{F})  \right| \leq \| \textbf{A} -  \textbf{B} \|_\textnormal{F} \left(  \| \textbf{A} \|_\textnormal{F} +  \| \textbf{B} \|_\textnormal{F} \right).
	\end{align*}
	Now, by the monotonicity of the supremum, 
	\begin{align}
	\label{eq:supconv}
	\begin{split}
	&\underset{\textbf{W} \in \mathcal{U} } {\sup}\left\{| f (\textbf{W},\textbf{Y})  - \hat{f} (\textbf{W},\textbf{Y}_N ) |\right\}
	\leq  
	\underset{\textbf{W} \in \mathcal{U} } {\sup} \left\{ \sum_{i,j}^p \left| \| \textbf{A}^{ij} \|_\textnormal{F}^2 - \| \hat{\textbf{A}}^{ij} \|_\textnormal{F}^2   \right| \right\} \\
	& \leq 
	\underset{\textbf{W} \in \mathcal{U} } {\sup} \left\{ \sum_{i,j}^p  \left\| \textbf{A}^{ij} - \hat{\textbf{A}}^{ij} \right\|_\textnormal{F}\left(\| \textbf{A}^{ij}\|_\textnormal{F} + \|\hat{\textbf{A}}^{ij} \|_\textnormal{F} \right) \right\} \\
	& \leq 
	\sum_{i,j}^p  \left\| \textbf{C}^{ij}[\textbf{Y}] - \hat{\textbf{C}}^{ij}[\textbf{Y}_N] \right\|_\textnormal{F}\left(\| \textbf{C}^{ij}[\textbf{Y}]\|_\textnormal{F} + \|\hat{\textbf{C}}^{ij}[\textbf{Y}_N] \|_\textnormal{F}\right),
	\end{split}
	\end{align}
	which converges in probability to zero, since $\hat{\textbf{C}}^{ij}[\textbf{Y}_N] \rightarrow_{\mathbb{P}} \textbf{C}^{ij}[\textbf{Y}]$.  Thus, the sequence of the sample TJADE objective functions $\hat{f}(\textbf{W},\textbf{Y}_N)$ converges uniformly in probability, with respect to the set of orthogonal matrices, to the theoretical TJADE objective function $f(\textbf{W},\textbf{Y})$. It now follows from Eqs. \eqref{eq:g_to_f_convergence} and \eqref{eq:supconv}, that under Assumption \ref{assu:JADE} and Assumption \ref{assu:FOBI}$ (v) $, the same holds for the sequence of the sample $k$-TJADE objective functions.

	We close the proof with the M-estimator convergence argument. In the following we use the notation, $ h(\textbf{W}, \textbf{Y}) := g(\textbf{H} \textbf{W}, \textbf{H} \textbf{Y})$ and $\hat{h}(\textbf{W}, \textbf{Y}_N) := \hat{g}(\hat{\textbf{H}} \textbf{W}, \hat{\textbf{H}} \textbf{Y}_N) $. As all our maximizers are unique only up to the signs and the order of their columns, to obtain a sequence such that  $\hat{\textbf{W}} \rightarrow_{\mathbb{P}}  \textbf{I}_p $, we restrict ourselves to a subset $\mathcal{U}_0$ of $ \mathcal{U} $ where the signs and order are fixed and $ \textbf{W} = \textbf{I}_p $ is the unique maximizer of $h(\textbf{W}, \textbf{Y})$. A corresponding set $\mathcal{U}_0$ can be constructed as follows,
	\begin{align*}
	\mathcal{U}_0 = \left\{ \textbf{W}\in \mathcal{R}^{p\times p} : \left(\textbf{W} \in \mathcal{U} \right) \land \left(\textbf{w}_i\tprime \textbf{1}_p \geq 0, \forall i\right) \land \left( \textbf{w}_1\tprime \textbf{D}^{11} \textbf{w}_1 \geq \ldots \geq \textbf{w}_p\tprime \textbf{D}^{pp} \textbf{w}_p \right) \right\},
	\end{align*}
	where $\textbf{D}^{ii} =(p-1) \textbf{E}^{ii}$. Moreover, as the conditions defining $ \mathcal{U}_0 $ are continuous in $ \textbf{W} $, there exists $ \varepsilon > 0 $ such that an $ \varepsilon $-neighborhood around $ \textbf{I}_p $ fits completely within $ \mathcal{U}_0 $. The following set defines the complement of such ball in $ \mathcal{U}_0 $,
	\begin{align*}
	\mathcal{U}_\varepsilon = \left\{ \textbf{W} \in \mathcal{U}_0 : \left\| \textbf{W} - \textbf{I}_p \right\|_\textnormal{F} \geq \varepsilon  >0   \right\}.
	\end{align*}
	Since there is a finite number of different combinations of the order and the signs of the columns of $\hat{\textbf{W}}$, we can, for every $ n $, consider the equivalent maximizer $\hat{\textbf{W}} \equiv \textbf{PJ}\hat{\textbf{W}}$ that belongs to the set $\mathcal{U}_0$, i.e. $\hat{\textbf{W}} = {\arg\!\max}_{\textbf{W} \in \mathcal{U}_0 } \{ \hat{h}(\textbf{W}, \textbf{Y}_N) \}$. Furthermore, by the definition of $ \mathcal{U}_0 $, the unique maximizer $ {\arg\!\max}_{\textbf{W} \in \mathcal{U}_0 } \{ h(\textbf{W}, \textbf{Y}) \} = \textbf{I}_p $.
	
	Now, 
	\begin{align*}
	&\hat{h}(\hat{\textbf{W}},\textbf{Y}_N) \geq \hat{h}({\textbf{I}_p},\textbf{Y}_N)  - h(\textbf{I}_p,\textbf{Y}) +  h(\textbf{I}_p,\textbf{Y}),
	\end{align*}
	and by subtracting ${h}(\hat{\textbf{W}},\textbf{Y}) $ from both sides and applying Eq. \eqref{eq:supconv}, we obtain,
	\begin{align}
	\label{eq:supconv2}
	\begin{split}
	0 \leq h(\textbf{I}_p,\textbf{Y}) - {h}(\hat{\textbf{W}},\textbf{Y}) & \leq \hat{h}(\hat{\textbf{W}},\textbf{Y}_N)  - {h}(\hat{\textbf{W}},\textbf{Y}) + h(\textbf{I}_p,\textbf{Y}) - \hat{h}({\textbf{I}_p},\textbf{Y}_N) \\
	&\leq 
	2 \underset{\textbf{W} \in \mathcal{U}_0} {\sup}\left\{\left| \hat{h}({\textbf{W}},\textbf{Y}_N)  - {h}({\textbf{W},\textbf{Y}}) \right| \right\} = o_p(1).
	\end{split}
	\end{align}
	Furthermore, it follows from the uniqueness of the maximizer $ \textbf{I}_p $ of $ h $ in $ \mathcal{U}_0 $, that, for every $\varepsilon >0$, there exists  $\delta >0$, such that
	\begin{align*}
	\underset{\textbf{W} \in \mathcal{U}_\varepsilon }{\sup}\left\{ h(\textbf{W},\textbf{Y}) \right\} < h(\textbf{I}_p,\textbf{Y}) - \delta.
	\end{align*}
	Fix now $ \varepsilon > 0 $ small enough along with the corresponding $ \delta >0$. Then,
	\[
	\{ \hat{\textbf{W}} \in \mathcal{U}_\varepsilon \} \subset \{ h(\textbf{I}_p,\textbf{Y}) - h(\hat{{\textbf{W}}},\textbf{Y})  > \delta  \},
	\]
	and
	\begin{align*}
	\mathbb{P}\left[ \| \hat{\textbf{W}} - \textbf{I}_p \|_\textnormal{F} \geq \varepsilon \right] = \mathbb{P}\left[  \hat{\textbf{W}} \in \mathcal{U}_\varepsilon \right] 
	\leq  \mathbb{P}\left[\left\{h(\textbf{I}_p,\textbf{Y}) - h(\hat{\textbf{W}},\textbf{Y})  > \delta \right\} \right] \underset{n \rightarrow \infty}{\longrightarrow} 0,
	\end{align*}
	where the convergence follows from Eq. \eqref{eq:supconv2}. Thus, $\hat{\textbf{W}} \rightarrow_{\mathbb{P}}  \textbf{I}_p$. This concludes the proof.

\end{proof}

\begin{proof}[Proof of Theorem \ref{theo:limiting}]

	As stated in the proof of Theorem \ref{theo:consistency}, the estimated left $k$-TJADE transformation is $\hat{\boldsymbol{\Gamma}}{}^k = \hat{\textbf{V}}{}\tprime \hat{\textbf{H}} \hat{\boldsymbol{\Sigma}}{}_1^{-1/2} = \hat{\textbf{W}}{}\tprime \hat{\boldsymbol{\Sigma}}{}_1^{-1/2}$, where $\hat{\textbf{W}} = (\hat{\textbf{w}}_1, \ldots, \hat{\textbf{w}}_p)$ is the sequence of the maximizers of $\hat{g}(\hat{\textbf{H}} \textbf{W}, \hat{\textbf{H}} \textbf{Y}_N)$. It follows from linearization and the consistency of $ \hat{{\textbf{W}}} $ that,
	\begin{align*}
	\sqrt{n} (\hat{\boldsymbol{\Gamma}}{}^k - \textbf{I}_p ) &= \hat{\textbf{W}}{}\tprime \sqrt{n} (\hat{\boldsymbol{\Sigma}}{}_1^{-1/2} - \textbf{I}_p ) +  \sqrt{n}(\hat{\textbf{W}}{}\tprime - \textbf{I}_p) \\
	&= \sqrt{n} (\hat{\boldsymbol{\Sigma}}{}_1^{-1/2} - \textbf{I}_p ) + \sqrt{n}(\hat{\textbf{W}}{}\tprime - \textbf{I}_p) + o_p(1).
	\end{align*}
	As the covariance standardization part is the same in both $k$-TJADE and TJADE, our goal in the following is to show that the expression $\sqrt{n}(\hat{\textbf{W}}{}\tprime - \textbf{I}_p)$ is asymptotically equivalent to $\sqrt{n}(\hat{\textbf{W}}{}_\textnormal{J}\tprime - \textbf{I}_p)$, where $\hat{\textbf{W}}_\textnormal{J}$ is the regular TJADE-rotation.
	
	The estimating equations for $\hat{\textbf{W}}$ can be found by applying the technique of Lagrangian multipliers, under the constraint $\textbf{W}\tprime \textbf{W} = \textbf{I}_p$, to the objective function $\hat{g}(\hat{\textbf{H}} \textbf{W}, \hat{\textbf{H}} \textbf{Y}_N)$. Similarly as in \cite{miettinen2016separation}, the estimating equations are
	\[
	\hat{\textbf{w}}_l\tprime \hat{\textbf{T}} (\hat{\textbf{w}}_m) = \hat{\textbf{w}}_m\tprime \hat{\textbf{T}} (\hat{\textbf{w}}_l),
	\]
	for any two distinct columns of $\hat{\textbf{W}}$, $m \neq l$, where
	\begin{align}\label{eq:estimating_eq_part_1}
	\begin{split}
	\hat{\textbf{T}} (\hat{\textbf{w}}_m) &= \sum_{|i - j| < k} \hat{\textbf{H}}\tprime \hat{\textbf{C}}{}^{ij}[\hat{\textbf{H}} \textbf{Y}_N] \hat{\textbf{H}} \hat{\textbf{w}}_m \hat{\textbf{w}}_m\tprime \hat{\textbf{H}}\tprime \hat{\textbf{C}}{}^{ij}[\hat{\textbf{H}} \textbf{Y}_N] \hat{\textbf{H}} \hat{\textbf{w}}_m \\
	&= \sum_{|i - j| < k} \sum_{a, b} \sum_{a', b'} \hat{h}_{ia} \hat{h}_{jb} \hat{h}_{ia'} \hat{h}_{jb'} \hat{\textbf{C}}{}^{ab}[ \textbf{Y}_N ] \hat{\textbf{w}}_m \hat{\textbf{w}}_m\tprime \hat{\textbf{C}}{}^{a'b'}[ \textbf{Y}_N ] \hat{\textbf{w}}_m \\
	&= \sum_{a, b} \hat{\textbf{C}}{}^{ab}[ \textbf{Y}_N] \hat{\textbf{w}}_m \hat{\textbf{w}}_m\tprime \hat{\textbf{C}}{}^{ab}[ \textbf{Y}_N ] \hat{\textbf{w}}_m - \hat{\textbf{m}}.
	\end{split}
	\end{align}
	In the above formula, the vector $ \hat{\textbf{m}} $ is defined as,
	\begin{align}\label{eq:estimating_eq_part_2}
	\hat{\textbf{m}} =  \sum_{|i - j| \geq k} \sum_{a, b} \sum_{a', b'} \hat{h}_{ia} \hat{h}_{jb} \hat{h}_{ia'} \hat{h}_{jb'} \hat{\textbf{C}}{}^{ab}[ \textbf{Y}_N ] \hat{\textbf{w}}_m \hat{\textbf{w}}_m\tprime \hat{\textbf{C}}{}^{a'b'}[ \textbf{Y}_N ] \hat{\textbf{w}}_m ,
	\end{align}
	the second equality is based on Lemma \ref{lem:H_simplification} and the third equality is based on the sum-completion technique that we applied also in the proof of Theorem \ref{theo:consistency}. Under Assumption \ref{assu:FOBI}$ (v) $, the vector $ \hat{\textbf{m}} $ satisfies $ \hat{\textbf{m}} = \mathcal{O}_p(1/n)$. We omit the proof, since it is almost identical to the proof of property $(1^\circ)$ in the proof of Theorem \ref{theo:consistency}.
	
	We denote $\textbf{C}^{ab} \coloneqq \textbf{C}^{ab}[ \textbf{Y} ]$ and $\hat{\textbf{C}}^{ab} \coloneqq \hat{\textbf{C}}^{ab}[ \textbf{Y}_N ]$. \red{Then, by Eq. \eqref{eq:estimating_eq_part_1} and Eq. \eqref{eq:estimating_eq_part_2}, any solution to the ($\sqrt{n}$-multiplied) $ k $-TJADE estimating equations,}
	\[
	\sqrt{n} \hat{\textbf{w}}_l\tprime \hat{\textbf{T}} (\hat{\textbf{w}}_m) = \sqrt{n} \hat{\textbf{w}}_m\tprime \hat{\textbf{T}} (\hat{\textbf{w}}_l), \quad m \neq l,
	\]
	\red{is also a solution to the estimating equations,}
	\begin{align}\label{eq:estimating_eq_jade}
	\sqrt{n} \sum_{a, b} \hat{\textbf{w}}_l\tprime \hat{\textbf{C}}{}^{ab}\hat{\textbf{w}}_m \hat{\textbf{w}}_m\tprime \hat{\textbf{C}}{}^{ab} \hat{\textbf{w}}_m = \sqrt{n} \sum_{a, b} \hat{\textbf{w}}_m\tprime \hat{\textbf{C}}{}^{ab} \hat{\textbf{w}}_l \hat{\textbf{w}}_l\tprime \hat{\textbf{C}}{}^{ab} \hat{\textbf{w}}_l + \mathcal{O}_p(1/\sqrt{n}),
	\end{align}
	\red{for $ m \neq l $, and vice versa.} (Note that we may also include the cases where $ m = l $ as those simply correspond to the trivial case $ 0 = o_p(1)$.) These are the estimating equations of the regular TJADE-rotation $\hat{\textbf{W}}_\textnormal{J}$, and yield a limiting distribution under Assumption \ref{assu:JADE}. It now follows that the limiting expression for $\hat{\textbf{W}}$ is asymptotically equivalent to the limiting expression of the TJADE-rotation $\hat{\textbf{W}}_\textnormal{J}$ and, consequently, under Assumption \ref{assu:JADE} and Assumption \ref{assu:FOBI}$ (v) $ we have that
	\[
	\sqrt{n} (\hat{\boldsymbol{\Gamma}}{}^{k} - \textbf{I}_{p} ) = \sqrt{n} (\hat{\boldsymbol{\Gamma}}{}^\textnormal{J} - \textbf{I}_{p} ) + o_p(1),
	\]
	technically concluding the proof. However, as the derivation of the convergence rate, root-$ n $, is left implicit in \cite{virta2017jade}, we present it here for completeness.
	
	Letting $ \hat{\textbf{D}}{}^{ab} = \hat{\textbf{W}}{}\tprime \hat{\textbf{C}}^{ab} \hat{\textbf{W}} \rightarrow_{\mathbb{P}} \delta_{ab} \kappa_a \textbf{E}^{aa} $, the estimating equations in Eq. \eqref{eq:estimating_eq_jade} along with the orthogonality constraint can be written in matrix form as,
	\begin{align*}
	\sqrt{n}\sum_{a,b} \hat{\textbf{W}}\tprime \hat{\textbf{C}}^{ab}  \hat{\textbf{W}} \mathrm{diag}(\hat{\textbf{D}}^{ab}) &= \sqrt{n}\sum_{a,b} \mathrm{diag}(\hat{\textbf{D}}^{ab}) \hat{\textbf{W}}\tprime \hat{\textbf{C}}^{ab}  \hat{\textbf{W}} + o_p(1), \label{eq:estimating_eq_1} \numberthis \\
	\sqrt{n} ( \hat{\textbf{W}}\tprime \hat{\textbf{W}} - \textbf{I}_p ) &= \textbf{0}. \label{eq:estimating_eq_2} \numberthis
	\end{align*}
	In order to prove that the convergence rate is root-$ n $, we bring the above estimating equations to the forms,
	\begin{align}\label{eq:two_a_equations}
	\begin{split}
	\hat{\textbf{A}}_1 \sqrt{n} \mathrm{vec}( \hat{\textbf{W}} - \textbf{I}_p ) &= \mathcal{O}_p(1),\\
	\hat{\textbf{A}}_2 \sqrt{n} \mathrm{vec}( \hat{\textbf{W}} - \textbf{I}_p ) &= \mathcal{O}_p(1),
	\end{split}
	\end{align}
	where $ \hat{\textbf{A}}_1, \hat{\textbf{A}}_2 $ converge in probability to some constant matrices $ \textbf{A}_1, \textbf{A}_2$ and the right-hand sides have limiting distributions depending only on the matrices $ \hat{\textbf{C}}{}^{ab} $, $ a,b \in \{1, \ldots , p\} $. By the central limit theorem, the limiting distributions of $ \sqrt{n} (\hat{\textbf{C}}{}^{ab} - \textbf{C}{}^{ab}) $ are multivariate normal. We also show that while neither $ \textbf{A}_1 $ nor $ \textbf{A}_2$ is invertible, their sum is, and we may take the sum of the two equations in Eq. \eqref{eq:two_a_equations} and multiply both sides of the resulting equation by $ ( \hat{\textbf{A}}_1 + \hat{\textbf{A}}_2 )^{-1}$ from the left. Applying the continuous mapping theorem and Slutsky's theorem then gives us the desired limiting result.
	
	We begin with the orthogonality constraint in Eq. \eqref{eq:estimating_eq_2} and expand as
	\[ 
	\textbf{0} = \sqrt{n} (\hat{\textbf{W}}\tprime \hat{\textbf{W}} - \textbf{I}_p) = \sqrt{n} (\hat{\textbf{W}}\tprime - \textbf{I}_p) \hat{\textbf{W}} + \sqrt{n} (\hat{\textbf{W}} - \textbf{I}_p). 
	\]
	Applying the vectorization operator along with the identity $ \mathrm{vec}(\textbf{A} \textbf{X} \textbf{B}\tprime) = (\textbf{B} \otimes \textbf{A}) \mathrm{vec}(\textbf{X})$ gives the first desired equation,
	\[ 
	\left[ (\hat{\textbf{W}}\tprime \otimes \textbf{I}_p ) \textbf{K} + \textbf{I}_{p^2} \right] \sqrt{n} \mathrm{vec}(  \hat{\textbf{W}} - \textbf{I}_p ) = \textbf{0},
	\]
	where $ \textbf{K} = \sum_{k = 1}^p \sum_{\ell = 1}^p \textbf{E}^{k\ell} \otimes \textbf{E}^{\ell k}$ is the commutation matrix. Note that $ \textbf{K} \mathrm{vec}(\textbf{A}\tprime) = \mathrm{vec}(\textbf{A}) $ for any matrix $ \textbf{A} \in \mathcal{R}^{p \times p} $. Thus $ \hat{\textbf{A}}_1 = (\hat{\textbf{W}}{}\tprime \otimes \textbf{I}_p ) \textbf{K} + \textbf{I}_{p^2} \rightarrow_{\mathbb{P}} \textbf{K} + \textbf{I}_{p^2} = \textbf{A}_1$.
	
	Let now $ \hat{\textbf{Y}} = \sum_{ab} \hat{\textbf{W}}{}\tprime \hat{\textbf{C}}{}^{ab}  \hat{\textbf{W}} \mathrm{diag}(\hat{\textbf{D}}{}^{ab}) $. Eq. \eqref{eq:estimating_eq_1} gives that $ \sqrt{n} \hat{\textbf{Y}} $ is asymptotically symmetric. That is, $ \sqrt{n} \hat{\textbf{Y}} = \sqrt{n} \hat{\textbf{Y}}{}\tprime + o_p(1) $. By applying $\hat{\textbf{C}}{}^{ab} = \hat{\textbf{C}}{}^{ab} - \delta_{ab} \kappa_a \textbf{E}^{aa} + \delta_{ab} \kappa_a \textbf{E}^{aa} $ and Slutsky's theorem, we obtain
	\begin{align*}
	\sqrt{n}\hat{\textbf{Y}} &=  \sum_{ab} \hat{\textbf{W}}\tprime \sqrt{n} ( \hat{\textbf{C}}^{ab} - \delta_{ab} \kappa_a \textbf{E}^{aa} )  \hat{\textbf{W}} \mathrm{diag}(\hat{\textbf{D}}^{ab}) + \sum_{ab} \hat{\textbf{W}}\tprime \sqrt{n} \delta_{ab} \kappa_a \textbf{E}^{aa} \hat{\textbf{W}} \mathrm{diag}(\hat{\textbf{D}}^{ab})\\
	&= \sum_a \kappa_a \sqrt{n} ( \hat{\textbf{C}}^{aa} - \kappa_a \textbf{E}^{aa} ) \textbf{E}^{aa} + \sum_{a} \sqrt{n} \kappa_a  \hat{\textbf{W}}\tprime \textbf{E}^{aa} \hat{\textbf{W}} \mathrm{diag}(\hat{\textbf{D}}^{aa}) + o_p(1).
	\end{align*}
	Let $ \sqrt{n} \hat{\textbf{F}}_1 = \sum_a \kappa_a \sqrt{n} ( \hat{\textbf{C}}{}^{aa} - \kappa_a \textbf{E}^{aa} ) \textbf{E}^{aa} $. Applying $ \hat{\textbf{D}}^{aa} = \hat{\textbf{W}}{}\tprime \hat{\textbf{C}}{}^{aa} \hat{\textbf{W}} = \hat{\textbf{W}}{}\tprime ( \hat{\textbf{C}}{}^{aa} - \kappa_a \textbf{E}^{aa}) \hat{\textbf{W}} + \kappa_a \hat{\textbf{W}}{}\tprime \textbf{E}^{aa} \hat{\textbf{W}} $, we obtain
	\begin{align*}
	\sqrt{n}\hat{\textbf{Y}} &= \sum_{a} \kappa_a \hat{\textbf{W}}\tprime \textbf{E}^{aa} \hat{\textbf{W}} \mathrm{diag}\left( \hat{\textbf{W}}\tprime \sqrt{n} ( \hat{\textbf{C}}^{aa} - \kappa_a \textbf{E}^{aa}) \hat{\textbf{W}} \right) \\
	&+ \sqrt{n} \sum_{a} \kappa_a \hat{\textbf{W}}\tprime \textbf{E}^{aa} \hat{\textbf{W}} \mathrm{diag}\left( \kappa_a \hat{\textbf{W}}\tprime  \textbf{E}^{aa} \hat{\textbf{W}} \right) + \sqrt{n} \hat{\textbf{F}}_1 + o_p(1),\\
	&= \sum_{a}\kappa_a \textbf{E}^{aa} \mathrm{diag}\left( \sqrt{n} ( \hat{\textbf{C}}^{aa} - \kappa_a \textbf{E}^{aa}) \right) \\
	&+ \sqrt{n} \sum_{a} \kappa_a^2 \hat{\textbf{W}}\tprime   \textbf{E}^{aa} \hat{\textbf{W}} \mathrm{diag}\left( \hat{\textbf{W}}\tprime \textbf{E}^{aa} \hat{\textbf{W}} \right) + \sqrt{n} \hat{\textbf{F}}_1 + o_p(1)
	\end{align*}
	Let $ \sqrt{n} \hat{\textbf{F}}_2 = \sum_{a}\kappa_a \textbf{E}^{aa} \mathrm{diag}( \sqrt{n} ( \hat{\textbf{C}}{}^{aa} - \kappa_a \textbf{E}^{aa}) ) $. By plugging in $ \hat{{\textbf{W}}} = \hat{{\textbf{W}}} - \textbf{I}_p + \textbf{I}_p $, we obtain
	\begin{align*}
	\sqrt{n}\hat{\textbf{Y}} &= \sqrt{n} \sum_{a} \kappa_a^2  \hat{\textbf{W}}\tprime  \textbf{E}^{aa} \hat{\textbf{W}} \mathrm{diag}\left( \hat{\textbf{W}}\tprime \textbf{E}^{aa} \hat{\textbf{W}} \right) + \sqrt{n} \hat{\textbf{F}}_1 + \sqrt{n} \hat{\textbf{F}}_2 +  o_p(1)\\
	&= \sum_{a} \kappa_a^2 \sqrt{n} ( \hat{\textbf{W}}\tprime - \textbf{I}_p)  \textbf{E}^{aa} \hat{\textbf{W}} \mathrm{diag}\left( \hat{\textbf{W}}\tprime \textbf{E}^{aa} \hat{\textbf{W}} \right)\\
	&+ \sum_{a} \kappa_a^2  \textbf{E}^{aa}  \sqrt{n} ( \hat{\textbf{W}} - \textbf{I}_p) \mathrm{diag}\left( \hat{\textbf{W}}\tprime \textbf{E}^{aa} \hat{\textbf{W}} \right) \\
	&+ \sqrt{n} \sum_{a} \kappa_a^2  \textbf{E}^{aa}  \mathrm{diag}\left( \hat{\textbf{W}}\tprime \textbf{E}^{aa} \hat{\textbf{W}} \right)  + \sqrt{n} \hat{\textbf{F}}_1 + \sqrt{n} \hat{\textbf{F}}_2 +  o_p(1).
	\end{align*}
	The terms $ \sqrt{n} \sum_{a} \kappa_a^2  \textbf{E}^{aa}  \mathrm{diag}( \hat{\textbf{W}}{}\tprime \textbf{E}^{aa} \hat{\textbf{W}} ) $ and $ \sqrt{n} \hat{\textbf{F}}_2 $ are symmetric and cancel themselves out in the symmetry identity $\sqrt{n} \hat{\textbf{Y}} = \sqrt{n} \hat{\textbf{Y}}{}\tprime + o_p(1) $, which can also be written as $ (\textbf{I}_{p^2} - \textbf{K}) \sqrt{n} \mathrm{vec}( \hat{\textbf{Y}}) = o_p(1)$. Let $ \hat{\textbf{G}}{}^{aa} = \mathrm{diag}( \hat{\textbf{W}}{}\tprime \textbf{E}^{aa} \hat{\textbf{W}} ) \rightarrow_{\mathbb{P}} \textbf{E}^{aa}$ and vectorize to obtain
	\begin{align*}
	& \left(\textbf{I}_{p^2} - \textbf{K}\right) \left( \sum_a \kappa_a^2 [\hat{\textbf{G}}^{aa} \hat{\textbf{W}}\tprime \textbf{E}^{aa} \otimes \textbf{I}_p ] \textbf{K} +  \sum_a \kappa_a^2 [\hat{\textbf{G}}^{aa} \otimes \textbf{E}^{aa} ]  \right) \sqrt{n} \mathrm{vec}(  \hat{\textbf{W}} - \textbf{I}_p )\\
	=& - \left(\textbf{I}_{p^2} - \textbf{K}\right) \sqrt{n} \mathrm{vec}( \hat{\textbf{F}}_1 ) + o_p(1).
	\end{align*}
	Thus the matrix $ \hat{\textbf{A}}_2 $ in Eq. \eqref{eq:two_a_equations} can be given as 
	\begin{align*}
	\hat{\textbf{A}}_2 &= \left(\textbf{I}_{p^2} - \textbf{K}\right)\left( \sum_a \kappa_a^2 [\hat{\textbf{G}}^{aa} \hat{\textbf{W}}\tprime \textbf{E}^{aa} \otimes \textbf{I}_p ] \textbf{K} +  \sum_a \kappa_a^2 [\hat{\textbf{G}}^{aa} \otimes \textbf{E}^{aa} ]  \right) \\
	& \rightarrow_{\mathbb{P}}  \left(\textbf{I}_{p^2} - \textbf{K}\right) \left( \sum_a \kappa_a^2 [ \textbf{E}^{aa} \otimes \textbf{I}_p ] \textbf{K} +  \sum_a \kappa_a^2 [\textbf{E}^{aa} \otimes \textbf{E}^{aa} ]  \right) \\
	&=  \left(\textbf{I}_{p^2} - \textbf{K}\right) \sum_a \kappa_a^2 [ \textbf{E}^{aa} \otimes \textbf{I}_p ] \textbf{K}\\
	&= \sum_a \kappa_a^2 [ \textbf{E}^{aa} \otimes \textbf{I}_p ] \textbf{K} - \sum_a \kappa_a^2 [ \textbf{I}_p \otimes \textbf{E}^{aa} ]\\
	&= [\textbf{D} \otimes \textbf{I}_p] \textbf{K} - [\textbf{I}_p \otimes \textbf{D} ] = \textbf{A}_2,
	\end{align*}
	where we have used the identities $(\textbf{I}_{p^2} - \textbf{K}) \sum_a \kappa_a^2 [\textbf{E}^{aa} \otimes \textbf{E}^{aa} ] = \textbf{0}$ and $ \textbf{K} [ \textbf{E}^{aa} \otimes \textbf{I}_p ] \textbf{K} =  [ \textbf{I}_p \otimes \textbf{E}^{aa} ]$ and the notation $ \textbf{D} = \sum_a \kappa_a^2  \textbf{E}^{aa} $.
	
	Taking the sum over the two expanded estimating equations in Eq. \eqref{eq:two_a_equations}, we now obtain,
	\begin{align}\label{eq:expansion}
	(\hat{\textbf{A}}_1 + \hat{\textbf{A}}_2) \sqrt{n} \mathrm{vec}(  \hat{\textbf{W}} - \textbf{I}_p ) =  - \left(\textbf{I}_{p^2} - \textbf{K}\right)  \sqrt{n} \mathrm{vec}( \hat{\textbf{F}}_1 ) + o_p(1),
	\end{align}
	where $ \hat{\textbf{A}}_1 + \hat{\textbf{A}}_2 \rightarrow_{\mathbb{P}} \textbf{A} = {\textbf{A}}_1 + {\textbf{A}}_2 = [\textbf{D} \otimes \textbf{I}_p] \textbf{K} - [\textbf{I}_p \otimes \textbf{D} ] + \textbf{K} + \textbf{I}_{p^2} $. The right-hand side of Eq. \eqref{eq:expansion} equals
	\begin{align*} 
	\left(\textbf{K} - \textbf{I}_{p^2} \right)  \sqrt{n} \mathrm{vec}( \hat{\textbf{F}}_1 ) + o_p(1) = \sqrt{n} \mathrm{vec}( \hat{\textbf{F}}_1{}\tprime  -   \hat{\textbf{F}}_1 ) + o_p(1) \label{eq:right_hand_side} \numberthis,
	\end{align*}
	which is $ \mathcal{O}_p(1) $ as $ \sqrt{n} \hat{\textbf{F}}_1 = \mathcal{O}_p(1) $. To see that $ \textbf{A} $ is invertible, we compute its determinant and verify that it is non-zero under Assumption \ref{assu:JADE}. Recall that Assumption \ref{assu:JADE} says that there is maximally one zero diagonal element in $ \textbf{D} $.
	
	Observe first that in the case $ p = 2 $, the matrix $ \textbf{A} $ takes the form,
	\[ 
	\begin{pmatrix}
	2 & 0 & 0 & 0\\
	0 & 1 - \kappa_2^2 & 1 + \kappa_1^2 & 0\\
	0 & 1 + \kappa_2^2 & 1 - \kappa_1^2 & 0\\
	0 & 0 & 0 & 2
	\end{pmatrix},
	\]
	where the $ 2 \times 2 $ diagonal block has determinant equal to $ -2(\kappa_1^2 + \kappa_2^2) $. As the determinant of a block diagonal matrix is the product of the determinants of the blocks, we have that $ \mathrm{det}(\textbf{A}) = -8(\kappa_1^2 + \kappa_2^2)$, verifying our claim in the case $ p = 2 $.
	
	For general $ p $, the matrix $ \textbf{A} $ can, after a suitable permutation of its rows and columns, not affecting the value of the determinant, be seen to consist of similar blocks as in the case $ p = 2 $. For each diagonal element of $ \sqrt{n} ( \hat{\textbf{W}} - \textbf{I}_p) $ (the elements $ a + (a - 1) p $, $ a \in \{ 1, \ldots , p\} $, of $\sqrt{n} \mathrm{vec}(  \hat{\textbf{W}} - \textbf{I}_p )$), we get a $ 1 \times 1 $ diagonal block equal to $ 2 $. For each $ (a, b) $th off-diagonal element in the lower triangle of $ \textbf{A} $, we get a $ 2 \times 2 $ diagonal block,
	\begin{align}\label{eq:2_by_2_block}
	\begin{pmatrix}
	1 - \kappa_b^2 & 1 + \kappa_a^2 \\
	1 + \kappa_b^2 & 1 - \kappa_a^2 \\
	\end{pmatrix},
	\end{align}
	with the determinant equal to $ -2(\kappa_a^2 + \kappa_b^2) $. As the number of diagonal elements is $ p $ and the number of off-diagonal elements in the lower triangle is $ p(p - 1)/2 $, the total determinant is,
	\[ 
	\mathrm{det}(\textbf{A}) = 2^p  (-2)^{p(p - 1)/2} \prod_{a > b} (\kappa_a^2 + \kappa_b^2),
	\]
	which is non-zero if and only if Assumption \ref{assu:JADE} holds.
	
	Multiply now both sides of Eq. \eqref{eq:expansion} by $ (\hat{\textbf{A}}_1 + \hat{\textbf{A}}_2)^{-1} $ (which is asymptotically well-defined as $\mathrm{det}(\textbf{A})  \neq 0$) and invoke Slutsky's theorem to obtain,
	\begin{align*}
	\sqrt{n} \mathrm{vec}(  \hat{\textbf{W}} - \textbf{I}_p ) = (\hat{\textbf{A}}_1 + \hat{\textbf{A}}_2)^{-1}  \sqrt{n} \mathrm{vec}( \hat{\textbf{F}}_1{}\tprime  -   \hat{\textbf{F}}_1 )   + o_p(1),
	\end{align*}
	where the right-hand side has the same limiting distribution as $ \textbf{A}^{-1} \sqrt{n} \mathrm{vec}( \hat{\textbf{F}}_1{}\tprime  -   \hat{\textbf{F}}_1 ) $, i.e., a multivariate normal distribution. This reveals that $ \sqrt{n}(  \hat{\textbf{W}} - \textbf{I}_p ) $ is indeed $ \mathcal{O}_p(1) $. To obtain asymptotic expressions for the elements of $ \sqrt{n}(  \hat{\textbf{W}} - \textbf{I}_p ) $, we inspect its diagonal and off-diagonal elements separately.
	
	Each element of $\sqrt{n}\mathrm{vec}(  \hat{\textbf{W}} - \textbf{I}_p )$ corresponding to a diagonal element of $ \sqrt{n}(  \hat{\textbf{W}} - \textbf{I}_p )$ is associated with a $ 1 \times 1 $ diagonal block in $ \textbf{A}^{-1} $ equal to $ 1/2 $. This block picks the corresponding diagonal element from the right-hand side of Eq. \eqref{eq:right_hand_side} into the expression of $ \sqrt{n}(  \hat{w}_{aa} - 1 ) $. However, the diagonal elements of the expression inside the vectorization operator in Eq. \eqref{eq:right_hand_side} are clearly zero and we have,
	\begin{align}\label{eq:result_diag} 
	\sqrt{n}(  \hat{w}_{aa} - 1 ) = o_p(1), \quad \forall a \in \{1, \ldots , p\}.
	\end{align}
	
	Take now a pair of off-diagonal elements $ (a, b) $, $ (b, a) $, with $ a > b $. Each such pair has a $ 2 \times 2 $ matrix of the form of Eq. \eqref{eq:2_by_2_block} associated with it, with the upper row corresponding to the element $ (a, b) $. By Cramer's rule, the inverse of the matrix is
	\[ 
	\frac{1}{2(\kappa_a^2 + \kappa_b^2)}
	\begin{pmatrix}
	\kappa_a^2 - 1 & \kappa_a^2 + 1 \\
	\kappa_b^2 + 1 & \kappa_b^2 - 1 \\
	\end{pmatrix}.
	\]
	The corresponding rows in $ \textbf{A}^{-1} \sqrt{n} \mathrm{vec}( \hat{\textbf{F}}_1{}\tprime  -   \hat{\textbf{F}}_1 ) $ can be given as
	\begin{align*}
	\frac{1}{2(\kappa_a^2 + \kappa_b^2)}
	\begin{pmatrix}
	\kappa_a^2 - 1 & \kappa_a^2 + 1 \\
	\kappa_b^2 + 1 & \kappa_b^2 - 1 \\
	\end{pmatrix}
	\begin{pmatrix}
	[\hat{\textbf{F}}_1{}\tprime  -   \hat{\textbf{F}}_1]_{ab} \\
	[\hat{\textbf{F}}_1{}\tprime  -   \hat{\textbf{F}}_1]_{ba} \\
	\end{pmatrix} =
	\frac{1}{(\kappa_a^2 + \kappa_b^2)} \begin{pmatrix}
	- [\hat{\textbf{F}}_1{}\tprime  -   \hat{\textbf{F}}_1]_{ab} \\
	[\hat{\textbf{F}}_1{}\tprime  -   \hat{\textbf{F}}_1]_{ab}
	\end{pmatrix},
	\end{align*}
	where $ [\textbf{M}]_{ab} $ denotes the $ (a, b) $ element of a matrix $ \textbf{M} $ and we have used  $ [\hat{\textbf{F}}_1{}\tprime  -   \hat{\textbf{F}}_1]_{ba} = - [\hat{\textbf{F}}_1{}\tprime  -   \hat{\textbf{F}}_1]_{ab}$ to simplify the expression. As $ [\hat{\textbf{F}}_1{}\tprime  -   \hat{\textbf{F}}_1]_{ab} = \kappa_a \sqrt{n}\hat{\textbf{C}}{}^{aa}_{ba} - \kappa_b \sqrt{n}\hat{\textbf{C}}{}^{bb}_{ab} = \kappa_a \sqrt{n}\hat{\textbf{C}}{}^{aa}_{ab} - \kappa_b \sqrt{n}\hat{\textbf{C}}{}^{bb}_{ba}$, we obtain
	\begin{align}\label{eq:result_off_diag} 
	\sqrt{n} \hat{w}_{ab} &= \frac{\kappa_a \sqrt{n}\hat{\textbf{C}}^{aa}_{ab} - \kappa_b \sqrt{n}\hat{\textbf{C}}^{bb}_{ba}}{\kappa_a^2 + \kappa_b^2} + o_p(1) 
	\end{align}
	for all $ a \neq b $, $ a, b \in \{1, \ldots , p\} $. The expressions in Eqs. \eqref{eq:result_diag} and \eqref{eq:result_off_diag} now match exactly to those obtained in the proof of \cite[Theorem 2]{virta2017jade} (with a different $ o_p(1) $-sequence, though) and the limiting variances can be derived as in \cite[Corollary 1]{virta2017jade}.
	
\end{proof}

%
%
%
%
%
%


\bibliographystyle{chicago}
\bibliography{references}

\end{document}